\theoremstyle{theorem}{
\newtheorem{thrm}{Theorem}[section]
\newtheorem*{thrm*}{Theorem}
\newtheorem{cor}[thrm]{Corollary}
\newtheorem{conj}[thrm]{Conjecture}
\newtheorem{lma}[thrm]{Lemma}
\newtheorem{propn}[thrm]{Proposition}
\newtheorem{statement}[thrm]{Statement}

\newtheorem*{thrmREP}{Theorem \ref{thrm:PartialAOwDerivs}}
}
\theoremstyle{definition}{
\newtheorem{defn}[thrm]{Definition}
\newtheorem*{defn*}{Definition}
\newtheorem{eg}[thrm]{Example}
\newtheorem*{eg*}{Example}

\newtheorem{notn}[thrm]{Notation}
}
\newcommand{\Imm}{\operatorname{Im}}
\newcommand{\gl}{\operatorname{GL}_2^+(\mathbb{Q})}
\newcommand{\glr}{\operatorname{GL}_2^+(\mathbb{R})}
\newcommand{\slr}{\operatorname{SL}_2(\mathbb{R})}
\newcommand{\slz}{\operatorname{SL}_2(\mathbb{Z})}
\newcommand{\uh}{\mathcal{H}}
\newcommand{\suq}{\subseteq}
\newcommand{\Raxp}{\mathbb{R}_\text{an,exp}}
\newcommand{\JClose}[1]{\left<\left<#1\right>\right>}
\newcommand{\Sp}{\operatorname{Sp}}
\title{A Modular Andr\'e-Oort Statement with Derivatives}
\author{Haden Spence}
\begin{document}

\maketitle

\begin{abstract}
 In unpublished notes, Pila discussed some theory surrounding the modular function $j$ and its derivatives.  A focal point of these notes was the statement of two conjectures regarding $j$, $j'$ and $j''$: a Zilber-Pink type statement incorporating $j$, $j'$ and $j''$, which was an extension of an apparently weaker conjecture of Andr\'e-Oort type.  In this paper, I first cover some background regarding $j$, $j'$ and $j''$, mostly covering the work already done by Pila.  Then I use a seemingly novel adaptation of the o-minimal Pila-Zannier strategy to prove a weakened version of Pila's ``Modular Andr\'e-Oort with Derivatives'' conjecture.  Under the assumption of a Schanuel-type conjecture, the central theorem of the paper implies Pila's conjecture in full generality, as well as a more precise statement on the same lines.
\end{abstract}

\section{Introduction}
The modular Andr\'e-Oort Conjecture is a statement about the arithmetic and algebraic properties of the classical modular function 
\[j:\uh\to\mathbb{C}.\]
The statement is the following.
\begin{thrm}[Pila, Modular Andr\'e-Oort]\label{thrm:ModularAO}
 Let $V\suq\mathbb{C}^n$ be an algebraic variety.  Then $V$ contains only finitely many maximal special subvarieties.
\end{thrm}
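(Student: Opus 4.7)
The plan is to follow the Pila-Zannier strategy, which synthesises three ingredients: a definability/tameness input from o-minimality, the Pila-Wilkie counting theorem, and a Galois-theoretic lower bound on orbits of special points. By standard reductions one may assume $V$ is irreducible and defined over $\overline{\mathbb{Q}}$, and proceed by induction on $\dim V$. The task then reduces to showing that if the set $\Sigma$ of special points contained in $V$ is Zariski dense in $V$, then $V$ itself is a special subvariety.

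First I would lift everything to the uniformisation $J := j \times \cdots \times j \colon \uh^n \to \mathbb{C}^n$ and restrict to the $n$-fold power $\mathcal{F}^n$ of the standard $\slz$-fundamental domain, on which $j$ is definable in $\Raxp$. Set $Z := J^{-1}(V) \cap \mathcal{F}^n$; this is a definable set in $\Raxp$. Each point of $\Sigma \cap V$ lifts (uniquely within $\mathcal{F}^n$) to a tuple of quadratic imaginary numbers in $\uh$ whose heights are polynomially controlled by the discriminants of the corresponding orders.

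The heart of the argument is a Pila-Wilkie dichotomy. The counting theorem for quadratic points, applied to $Z$, yields: for every $\epsilon > 0$, the number of quadratic points of height at most $T$ lying in the transcendental part $Z^{\mathrm{trans}}$ is $O_\epsilon(T^\epsilon)$. Against this, a Galois lower bound (available from Siegel's class-number estimate, via work of Colmez and of Nakkajima-Taguchi) asserts that a special point with CM-discriminants $D_1, \ldots, D_n$ has Galois orbit of size $\gg_\epsilon \bigl(\max_i |D_i|\bigr)^{1/2 - \epsilon}$ over the field of definition of $V$. Since this greatly exceeds the counting bound on $Z^{\mathrm{trans}}$, all but finitely many lifts of special points must lie in the algebraic part $Z^{\mathrm{alg}}$.

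Finally I would invoke Pila's modular Ax-Lindemann theorem, which identifies the maximal positive-dimensional semi-algebraic subvarieties of $Z$ as \emph{pre-weakly-special}: loci obtained by fixing certain coordinates at quadratic points and imposing modular relations $\Phi_N = 0$ between others. Their $J$-images are weakly special subvarieties of $\mathbb{C}^n$ contained in $V$. Combined with the Zariski density of $\Sigma \cap V$, this forces $V$ to be itself special, closing the induction. The main obstacle is the Galois lower bound: in this modular setting it is obtainable from classical class-number estimates, but its reliance on ineffective Siegel-type input is a characteristic feature of the strategy, and adapting such a bound to variants of this theorem (for instance those incorporating derivatives, as later in the paper) is typically where the real work lies.
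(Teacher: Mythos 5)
The paper does not prove Theorem~\ref{thrm:ModularAO}: it is stated as the known foundational result, attributed to Pila's 2011 paper and cited as such, and thereafter serves only as a prototype for the $J$-setting developed in Sections 2--4. Your outline is a correct summary of the Pila--Zannier argument Pila actually used, and it tracks the strategy that the present paper adapts (definability in $\Raxp$, Pila--Wilkie for algebraic points of bounded degree, Siegel-type Galois lower bound, Ax--Lindemann).

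Two small imprecisions are worth flagging. First, modular Ax--Lindemann identifies maximal connected semi-algebraic subsets of $Z^{\mathrm{alg}}$ with \emph{weakly} $\uh$-special varieties, whose constant coordinates are a priori arbitrary points of $\uh$, not quadratic ones; that the constants can be taken quadratic is an a posteriori consequence of the density of special points and the Galois/counting step, not an output of Ax--Lindemann itself. (Compare Definition~\ref{defn:geodesics}, where the distinction between weakly $\uh$-special and $\uh$-special is precisely this.) Second, the passage from ``special points are Zariski dense and live on a family of positive-dimensional weakly special subvarieties'' to ``$V$ is itself special'' is more than a one-line induction: one needs finiteness and definability statements for the family of maximal basic geodesic varieties contained in $J^{-1}(V)$ (the analogue of Proposition~\ref{propn:BasicSpecialCount} in the classical setting), together with a separate handling of coordinates whose discriminants remain bounded. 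Those structural and inductive arguments are exactly what this paper re-engineers for the $J$ case in the proof of Theorem~\ref{thrm:PartialAOwDerivs}. Apart from these points, your synthesis of the three inputs is precisely the one used, and your closing remark about the Galois lower bound being the characteristic ineffective ingredient is accurate.
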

While various partial \cite{Andre1998} and conditional \cite{Klingler2014} results were known, this was first proven unconditionally and in full generality by Pila, in his 2011 paper \cite{Pila2011}, using what is now a fairly standard strategy employing ideas of o-minimality and point-counting.  The connection with the $j$ function is obscured behind the definition of ``special subvariety'', which goes as follows.  It is well known that there are modular polynomials $\Phi_N\in\mathbb{Z}[X,Y]$ with the property that 
\[\Phi_N(j(g\tau),j(\tau))=0\]
whenever $g\in\gl$ is a primitive integer matrix of determinant $N$.   So, although $j$ is a transcendental function, it is very well-behaved under the action of $\gl$.  A fairly direct consequence of the existence of the polynomials $\Phi_N$ is the (also well known) fact that $j(\tau)$ is an algebraic integer whenever $\tau\in\uh$ is quadratic.

Loosely, a special subvariety of $\mathbb{C}^n$ is a variety induced by these relations.  To be more precise, we have the following definition.
\begin{defn}\label{defn:geodesics}
 Let $n\in\mathbb{N}$.

 Let $S_0\cup S_1\cup\dots\cup S_k$ be a partition of $\{1,\dots,n\}$, where $k\geq 0$ and $S_i\ne\emptyset$ for $i>0$.
 For each $s\in S_0$, choose any point $q_s\in\uh$.  For each $i>0$, let $s_i$ be the least element of $S_i$ and for each $s_i\ne s\in S_i$ choose a geodesic matrix $g_{i,s}\in\gl$.  A \emph{weakly $\uh$-special} subvariety of $\uh^n$ is a set of the form
 \[\{(\tau_1,\dots,\tau_n)\in\uh^n:\tau_s=q_s\text{ for }s\in S_0, \tau_{s}=g_{i,s}\tau_{s_i} \text{ for }s\in S_i, s\ne s_i,i=1,\dots,k\},\]
 for some given data $S_i$, $q_s$, $g_{i,s}$.
 
 A weakly $\uh$-special subvariety is \emph{$\uh$-special} if the constant factors $q_s$ are imaginary quadratic numbers for all $s\in S_0$. 
\end{defn}
One can define special (henceforth to be known as $j$-special) subvarieties in a similar way, as varieties in $\mathbb{C}^n$ cut out by the modular polynomials $\Phi_N$.  For our purposes, however, there is a simpler definition that suits better.  By abuse of notation, we will write $j$ for the function
\[(\tau_1,\dots,\tau_n)\mapsto (j(\tau_1),\dots,j(\tau_n)).\]
A $j$-special variety in $\mathbb{C}^n$ is then the image under $j$ of an $\uh$-special variety; similarly a weakly $j$-special variety is the image under $j$ of a weakly $\uh$-special variety.  \\

In this paper, I will be investigating what happens when we consider not only $j$ and its corresponding special varieties, but also the derivatives of $j$.  In this setup, the situation is rather more complicated.  To begin with, recall that $j$ satisfies a certain 3rd order differential equation.  Hence it is enough to consider only $j$ and its first two derivatives.  With this in mind, let us define a function which will be our central object of study for the paper:
\[J=(j,j',j''):\uh\to\mathbb{C}^3,\]
\[J(\tau)=(j(\tau),j'(\tau),j''(\tau)).\]
Again we will abuse notation and use $J$ also to refer to the obvious map from $\uh^n$ to $\mathbb{C}^{3n}$ defined as $J$ on each coordinate.

Much of the setup here is due to Pila; in unpublished notes, he compiled various properties of $J$ and gave definitions of what a $J$-special variety should be.  Pila also made a conjecture analogous to \ref{thrm:ModularAO}, with respect to $J$.  For the rest of this section, I will give some of the setup covered by Pila in his notes.  Towards the end of the section I state Pila's conjecture and a weakened version of the conjecture which is the central theorem of the paper.

\subsection{Properties of $J$}
The functions $j'$ and $j''$ are not fully modular functions.  Instead, they satisfy
\[j'(\gamma\tau)=(c\tau+d)^2j'(\tau)\]
and
\[j''(\gamma\tau)=(c\tau+d)^4j''(\tau)+2c(c\tau+d)^3j'(\tau),\]
where $\gamma=\begin{pmatrix}a&b\\c&d\end{pmatrix}\in\slz$.  This says that $j'$ is a meromorphic modular form of weight 2, while $j''$ is a so-called \emph{quasimodular} form of weight 4 and depth 1.  The behaviour of $j'$ and $j''$ at quadratic points is also worse than that of $j$; while $j(\tau)$ is algebraic at quadratic $\tau$, $j'(\tau)$ and $j''(\tau)$ almost never are.  In fact, such points are always transcendental unless $\tau$ is in the $\slz$-orbit of $i$ or $\rho=e^{\frac{2\pi i}{3}}$ (see \cite{Diaz2000}).  However, thanks to a result of Masser, $j''(\tau)$ is always algebraic over $j'(\tau)$.  To be precise, we have a rational function $p_c$, in 3 variables (with $c$ a positive real), defined as follows:
\[p_c(W,X,Z)=\dfrac{1}{6}\dfrac{Z^2(X-7W+6912)}{W(W-1728)}+\dfrac{iZ}{c}.\]
A brief calculation shows that
\[p_{\Imm\tau}(j(\tau),\chi^*(\tau),j'(\tau))=j''(\tau)\]
for all $\tau$.  Here and throughout, $\chi^*$ is an almost holomorphic modular (AHM) function defined by
\[\chi^*=1728\cdot\dfrac{E_2^*E_4E_6}{E_4^3-E_6^2},\]
where the $E_k$ are standard Eisenstein series and $E_2^*$ is the weight 2 almost holomorphic modular form defined by
\[E_2^*(\tau)=E_2(\tau)-\dfrac{3}{\pi\Imm\tau}.\]
We can decompose $\chi^*$ as 
\[\chi^*=\chi-\dfrac{3}{\pi y}\cdot f,\]
where $\chi$ and $f$ are, here and throughout, holomorphic functions defined by
\[\chi=1728\cdot\dfrac{E_2E_4E_6}{E_4^3-E_6^2},\qquad f=1728\cdot\dfrac{E_4E_6}{E_4^3-E_6^2}.\]
The function $\chi^*$ and related AHM functions have been studied in several places, by Masser \cite{Masser1975}, Mertens and Rolen \cite{Mertens2015} and Zagier \cite{Zagier2008} among others.  I have studied $\chi^*$ in the context of an Andr\'e-Oort result \cite{Spence2016}; we will be making use of some results from that paper occasionally.

For the present, the most relevant fact is that $\chi^*(\tau)$ is algebraic whenever $\tau$ is quadratic\footnote{This was essentially proven by Masser in the Appendix of \cite{Masser1975}, which seems to have been the first investigation of the algebraic properties of AHM functions.}.  As a consequence, we see that $p_{\Imm\tau}$ has coefficients in $\overline{\mathbb{Q}}$ whenever $\tau$ is quadratic.  Hence, as claimed, $\text{tr.deg.}_\mathbb{Q}(j(\tau),j'(\tau),j''(\tau))=1$ for quadratic $\tau\not\in \slz\cdot\{i,\rho\}$.

By differentiating the modular polynomials, we see that $J$ also has nice behaviour with respect to $g\in\gl$.  We will briefly work out the details of this in the case of $j'$.  Let $g\in\gl$ be a primitive integer matrix of determinant $N$.  Since
\[\Phi_N(j(\tau),j(g\tau))=0,\]
we see that
\[\partial_X(\Phi_N)(j(\tau),j(g\tau))j'(\tau)+\partial_Y(\Phi_N)(j(\tau),j(g\tau))j'(g\tau)\dfrac{d(g\tau)}{d\tau}=0\]
\[\implies j'(g\tau)=-\lambda_N(j(\tau),j(g\tau))j'(\tau)m_g(\tau),\]
where \[\lambda_N=\dfrac{\partial_X(\Phi_N)}{\partial_Y(\Phi_N)}\]
and 
\[m_g(\tau)=\dfrac{(c\tau+d)^2}{\det g}=\left(\dfrac{d(g\tau)}{d\tau}\right)^{-1}.\]
For $j''$, we have a similar relation, though a bit more complex.  In both cases, the nature of the relation differs depending on whether $g$ is upper triangular.  If $g$ fails to be upper triangular (ie. $c\ne 0$) then the relation between the functions $j'(g\tau)$ and $j'(\tau)$ (respectively for $j''$) only exists over $\mathbb{C}(\tau)$.  Otherwise the relationship is over $\mathbb{C}$.  Hence
\[\text{tr.deg.}_\mathbb{C}(J(\tau),J(g\tau))=3\]
(considering $\tau$ as a variable in $\uh$) if $g$ is upper triangular and
\[\text{tr.deg.}_\mathbb{C}(J(\tau),J(g\tau))=4\]
otherwise.

Since the behaviour of $J$ is affected by whether or not a matrix is upper triangular, it makes sense to make the following definition.
\begin{defn}
 A weakly $\uh$-special variety $G$ is called a geodesic upper-triangular (GUT) variety if all of the $g_{i,s}\in\gl$ arising in its definition (see \ref{defn:geodesics}) are upper triangular matrices.
\end{defn}
We will be making use of GUT varieties quite often, later on in this paper.

\subsection{Special sets for $J$}
In the classical case, the special sets were just the images, under $j$, of $\uh$-special sets.  An important feature is that they are bi-algebraic; for $\uh$-special sets $G$, both $G$ and $j(G)$ are algebraic sets defined over $\overline{\mathbb{Q}}$.  In the $J$ case, as noted earlier, $J(G)$ is not necessarily an algebraic set, let alone defined over $\overline{\mathbb{Q}}$.  The solution to this is fairly simple; just take Zariski closures over $\overline{\mathbb{Q}}$.  We are still following Pila, and will use the following notation, also of his design.
\begin{notn}
 For any subset $S\suq\uh^n$, define $\JClose{S}$ to be the $\overline{\mathbb{Q}}$-Zariski closure of $J(S)$.  That is, $\JClose{S}$ is the smallest algebraic variety, defined over $\overline{\mathbb{Q}}$, which contains $J(S)$.
\end{notn}
\begin{defn}
 A $J$-special subvariety of $\mathbb{C}^{3n}$ is an irreducible component of any set of the form $\JClose{G}$, where $G$ is an $\uh$-special set.
\end{defn}
With this definition made, we might conjecture a direct analogue of \ref{thrm:ModularAO}, with $j$-special varieties replaced by $J$-special ones.  This fails for a fairly obvious reason.  Consider the variety $V\suq\mathbb{C}^3$ defined just by
\[X_1=j(\tau),\]
for some fixed quadratic $\tau\in\uh\setminus \slz\cdot\{i,\rho\}$.  Then by modularity of $j$, $V$ contains all the points $J(\gamma\tau)$, $\gamma\in\slz$.  In particular, taking Zariski closures, we have
\[\JClose{\gamma\tau}\suq V\]
for all $\gamma\in\slz$.  Since
\[\JClose{\gamma\tau}=\{(j(\tau),w,p_{\Imm\gamma\tau}(j(\tau),\chi^*(\tau),w)):w\in\mathbb{C}\},\]
one sees that the various $\JClose{\gamma\tau}$ are in fact distinct.  So $V$ contains infinitely many distinct $J$-special sets.  They are maximal since $\JClose{\uh}=\mathbb{C}^3\not\suq V$.  

With this in mind, we need a version of \ref{thrm:ModularAO} which takes into account the action of $\slz$.  The aforementioned conjecture of Pila is one such version.  To state it, we will need a definition.

\begin{defn}
Let $\mathcal{S}$ be a collection of subsets of $\uh^n$.  We say $\mathcal{S}$ is $\slz$-finite if there is some finite subcollection $\mathcal{T}\suq \mathcal{S}$, such that every $S\in\mathcal{S}$ takes the form
\[S=\gamma\cdot T,\]
for some $\gamma\in\slz^n$, $T\in\mathcal{T}$.  Otherwise, $\mathcal{S}$ is $\slz$-infinite.  
 
 Abusing notation slightly, we also use this terminology to apply to collections of points, equating points $\tau$ with singleton sets $\{\tau\}$ in the obvious way.
\end{defn}

Now we can state Pila's conjecture.
\begin{conj}[Pila, ``Modular Andr\'e-Oort with Derivatives'']\label{conj:PartialAOwDerivs}\hfill\\
 Let $V\suq\mathbb{C}^{3n}$ be a proper algebraic variety defined over $\overline{\mathbb{Q}}$.  There exists an $\slz$-finite collection $\sigma(V)$, consisting of \emph{proper} $\uh$-special varieties of $\uh^n$, with the following property.  Every $J$-special subvariety of $V$ is contained in $\JClose{G}$ for some $G\in\sigma(V)$.
\end{conj}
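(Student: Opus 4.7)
The plan is to adapt the Pila--Zannier o-minimal strategy, the ``novel adaptation'' flagged in the abstract accounting for the failure of $j'$ and $j''$ to be algebraic at quadratic points. First reduce the statement to $\uh^n$. Let $\Sigma$ be the set of all proper $\uh$-special $G\suq\uh^n$ for which some irreducible component of $\JClose{G}$ is contained in $V$; this is the natural candidate for $\sigma(V)$, since every $J$-special subvariety of $V$ is by definition a component of some such $\JClose{G}$. The task becomes showing that $\Sigma$ is $\slz^n$-finite. Attach to each $G\in\Sigma$ a \emph{complexity} built from the heights of the quadratic constants $q_s$ together with the entries and determinants of the geodesic matrices $g_{i,s}$, and aim to bound these complexities modulo $\slz^n$.

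Next set up the definable stage. Fix a standard $\slz$-fundamental domain $\mathcal{F}\suq\uh$. Each of $j$, $j'$, $j''$ restricted to $\mathcal{F}$ is $\Raxp$-definable via its Fourier expansion, so $J|_{\mathcal{F}^n}$ is definable and the preimage $Y=(J|_{\mathcal{F}^n})^{-1}(V)$ is definable. After reducing each $g_{i,s}$ to its $\slz$-representative, an $\uh$-special $G$ is encoded by a point in a definable parameter space $P$, and the condition ``some component of $\JClose{G}$ lies in $V$'' carves out a definable subset $Y'\suq P$. A complexity-$T$ variety is represented by an integral point of polynomially bounded height in $Y'$.

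The counting step contains the principal difficulty. Pila--Wilkie applied to $Y'$ bounds the rational points of height at most $T$ in its transcendental part by $O(T^{\varepsilon})$. Against this I need an arithmetic \emph{lower} bound: if $\Sigma/\slz^n$ is infinite, a Galois/modular argument should produce $\gg T^{\delta}$ distinct complexity-$T$ points of $Y'$, by letting Galois permute the quadratic $q_s$ and the reduced forms of the $g_{i,s}$. In the classical André--Oort proof Pila uses that $j(q_s)$ is algebraic with large Galois orbit, but here $J(q_s)$ is transcendental unless $q_s\in\slz\cdot\{i,\rho\}$, so the naive Galois argument collapses. To salvage it I would exploit that $\chi^*(q_s)\in\overline{\mathbb{Q}}$ for quadratic $q_s$ and that the identity $p_{\Imm\tau}(j(\tau),\chi^*(\tau),j'(\tau))=j''(\tau)$ forces $j''(q_s)\in\overline{\mathbb{Q}}(j'(q_s))$; the algebraic ``skeleton'' $(j(q_s),\chi^*(q_s))$ still carries large Galois orbits, and the derivative coordinates are controlled via this skeleton and via $j'(g\tau)=-\lambda_N(j(\tau),j(g\tau))j'(\tau)m_g(\tau)$. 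Under a Schanuel-type transcendence hypothesis, the transcendental $j'$-values introduce no unexpected algebraic relations across the Galois orbit and the required lower bound follows, whereas without such a hypothesis one obtains only the weakened version the paper actually proves.

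Finally, to put the result in the stated form, the algebraic parts of $Y'$ produced by Pila--Wilkie must be identified with weakly $\uh$-special subvarieties, via an Ax--Lindemann-type companion theorem for $J$. The GUT distinction enters here, because only for upper-triangular $g\in\gl$ is the relation between $j'(g\tau)$ and $j'(\tau)$ algebraic over $\mathbb{C}$, while for non-upper-triangular $g$ it is only algebraic over $\mathbb{C}(\tau)$, so the bi-algebraic geometry for $J$ differs markedly from the $j$-case. Assembling the finitely many bounded-complexity $G$'s into a finite $\mathcal{T}$ whose $\slz^n$-translates generate $\sigma(V)$ then completes the proof, with the arithmetic lower bound for the derivative coordinates remaining the main obstacle throughout.
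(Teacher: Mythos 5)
Your high-level road map matches the paper's: reduce to $\uh^n$, set up a definable stage with $J$ restricted to $\mathbb{F}^n$, run Pila--Wilkie against a Galois lower bound built from the algebraic skeleton $(j(\tau),\chi^*(\tau))$ (using $p_{\Imm\tau}$ and $j'(g\tau)=-\lambda_N(j(\tau),j(g\tau))j'(\tau)m_g(\tau)$), invoke an Ax--Lindemann companion sensitive to the GUT distinction, and observe that the final step requires a Schanuel-type hypothesis (the paper's Conjecture~\ref{conj:AlgIndep}), so that only a $j'$-generic version holds unconditionally. All of that is faithful. The paper then obtains Conjecture~\ref{conj:PartialAOwDerivs} as an immediate corollary of Theorem~\ref{thrm:PartialAOwDerivs} under Conjecture~\ref{conj:AlgIndep}.

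The genuine gap is in the middle of your sketch, and you yourself flag it as ``the principal difficulty'': you propose to carve out a definable set $Y'$ of parameters encoding $\uh$-special $G$ with some component of $\JClose{G}\suq V$ and count integral points in $Y'$ directly. This fails, for two related reasons. First, the condition ``$\JClose{G}\suq V$'' is not naturally a definable condition of the requisite form: it involves the full transcendental image $J(G)$ including the almost-holomorphic term $-3/(\pi\Imm\tau)$, and it is not preserved by the Galois action you want to use (since $j'(\tau)$ and $j''(\tau)$ are transcendental, a Galois conjugate of the skeleton does not lift to a Galois conjugate of $J(\tau)$). Second, the naive rational points of $Y'$ produced by Galois conjugating $(j(q_s),\chi^*(q_s))$ do not land back in $Y'$ precisely because the derivative coordinates shift in an uncontrolled way. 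The paper's resolution --- and the central new idea you are missing --- is the relation ``$B\hookrightarrow V$'' (adjacency), in which the $j'$-coordinate is allowed to be rescaled by free complex parameters $z_{i,j}$ and the $j''$-coordinate is replaced by $p_{c_{i,j}}(j,\chi^*,\cdot)$ with a free real $c_{i,j}$. Adjacency is engineered to be simultaneously $\Raxp$-definable, $\slz$-invariant on the $g_{i,j}$, and invariant under Galois on the skeleton, which is exactly what your Galois/point-counting pipeline needs and what containment of $\JClose{G}$ in $V$ lacks. One then counts varieties adjacent to $V$ (via the set $\mathcal{Z}_{\sigma,g}$ and Proposition~\ref{propn:BasicSpecialCount}), proves an Ax--Lindemann theorem for adjacency (Theorem~\ref{propn:AxLindOverQ}), and an extra argument (the $n=1$ case and the inductive construction of $V^*$) is needed to pass from ``adjacent'' back to ``contained in $V$,'' a step your proposal omits entirely. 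Without the adjacency notion, the upper and lower bounds you want to play against each other are not about the same definable set, and the argument does not close.
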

We will be approaching this conjecture using a variant of the Pila-Zannier strategy and o-minimality.  With a seemingly novel adaptation of the usual strategy, we are able to make some good progress.  The o-minimal methods are sufficient to give us good control over quadratic points of the form 
\[(g_1\sigma,\dots,g_{n}\sigma).\]
In this case, the methods yield a bound on the size of the determinants of the $g_i\in\gl$ and on the discriminant of $\sigma$.  This is a good step towards Conjecture \ref{conj:PartialAOwDerivs}.

There is significant difficulty, however, in dealing with points having a more complex $\gl$-structure.  The difficulty lies in the possibility that, for two quadratic points $\tau,\sigma\in\uh$, it might happen that $j'(\tau)$ and $j'(\sigma)$ are algebraically dependent even when $\tau$ and $\sigma$ lie in distinct $\gl$-orbits.  As we will discuss shortly, we do not expect this to happen, but it does not seem to be possible to exclude the possibility using o-minimal methods.  So we need the following definition.

\begin{defn}
 Let $\tau=(\tau_1,\dots,\tau_n)\in\uh^n$ be a quadratic point.  Then $\tau$ may be written as 
 \[\tau=(\sigma_1,g_{1,1}\sigma_1,\dots,g_{1,r_1}\sigma_1,\dots,\sigma_k,g_{k,1}\sigma_k,\dots,g_{k,r_k}\sigma_k),\]
 with $g_{i,j}\in\gl$ and the $\sigma_i$ lying in distinct $\gl$-orbits.  We say $\tau$ is $j'$-generic if the numbers $j'(\sigma_1),\dots,j'(\sigma_k)$ are algebraically independent over $\overline{\mathbb{Q}}$.
\end{defn}
Now we can state the central theorem of the paper.

\begin{thrm}\label{thrm:PartialAOwDerivs}
 Let $V\suq\mathbb{C}^{3n}$ be a proper algebraic variety defined over $\overline{\mathbb{Q}}$.  There exists an $\slz$-finite collection $\sigma(V)$, consisting of \emph{proper} $\uh$-special varieties of $\uh^n$, with the following property.  Every $j'$-generic $\uh$-special point in $J^{-1}(V)$ is contained in some $G\in\sigma(V)$.
\end{thrm}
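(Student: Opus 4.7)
The plan is to run the o-minimal Pila--Zannier strategy on the preimage $J^{-1}(V)$ inside a fundamental domain, using $j'$-genericity as a substitute for a full functional transcendence statement. Fix a standard fundamental domain $\mathcal{F}\suq\uh$ for $\slz$ and consider
\[Z=\{\tau\in\mathcal{F}^n:J(\tau)\in V\}.\]
The restrictions of $j$, $j'$, $j''$, together with $\chi^*$, $f$ and $E_2^*$, to $\mathcal{F}$ are definable in $\Raxp$ by Peterzil--Starchenko-type analysis, so $Z$ is definable. Any $\uh$-special point $\tau$ can be translated into $\mathcal{F}^n$ by some element of $\slz^n$, producing a representative $\tilde\tau$ whose $\gl$-orbit block structure is inherited from $\tau$; its complexity is measured by the discriminants $D_i$ of the orbit representatives $\sigma_i$ and by the minimal determinants $N_{i,j}$ of the matrices $g_{i,j}$ realizing $\tilde\tau$.

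I would then establish two opposing bounds on this complexity. The upper bound comes from Pila--Wilkie applied to a suitable definable family built from $Z$: on the transcendental part, the number of quadratic points of complexity at most $T$ is $\ll T^\epsilon$. The lower bound is Galois-theoretic. Since $V$ is defined over $\overline{\mathbb{Q}}$ and, by $j'$-genericity, the values $j'(\sigma_1),\dots,j'(\sigma_k)$ are algebraically independent over $\overline{\mathbb{Q}}$, Galois automorphisms act on each block independently via the class group of the order associated with $\sigma_i$, and modularity provides a representative in $\mathcal{F}^n$ for each conjugate. This produces at least $\prod_i D_i^{1/2-\epsilon}$ distinct quadratic points in $Z$ of comparable complexity, and matching this against the Pila--Wilkie bound forces $\max_i D_i$ and $\max_{i,j} N_{i,j}$ to be bounded in terms of $V$ alone. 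The algebraic independence of the $j'(\sigma_i)$ is what allows the class number bounds from distinct orbits to multiply rather than collapse, and it is the one place in the argument where the $j'$-generic hypothesis is genuinely used.

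Once complexity is bounded, the orbit representatives $\sigma_i$ and the matrices $g_{i,j}$ fall into finitely many classes modulo $\slz$, and each class determines a weakly $\uh$-special subvariety of $\uh^n$. After discarding the total space $\uh^n$ (which is excluded since $V$ is proper in $\mathbb{C}^{3n}$ and $\JClose{\uh^n}=\mathbb{C}^{3n}$), these form the required $\slz$-finite collection $\sigma(V)$ of proper $\uh$-special subvarieties containing every $j'$-generic $\uh$-special point of $J^{-1}(V)$.

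The main obstacle is the treatment of the algebraic part of $Z$ in the Pila--Wilkie step: one must ensure that large semialgebraic pieces of $Z$ cannot absorb the Galois conjugates of a $j'$-generic point without themselves corresponding to a proper $\uh$-special subvariety that can be added to $\sigma(V)$. The difficulty stems from the fact that $J$-images of $\uh$-special sets are not usually algebraic, and the almost-holomorphic correction term $-\frac{3}{\pi\Imm\tau}$ inside $\chi^*$ couples the algebraic structure of different coordinates in a transcendental way. Parametrizing the definable family so that genuine semialgebraic relations on $Z$ correspond only to $\uh$-special subvarieties, rather than to accidental algebraic relations among $j'$-values across distinct $\gl$-orbits, is expected to be the most delicate point, and is precisely the reason the theorem as stated controls only $j'$-generic points rather than arbitrary ones as in Conjecture \ref{conj:PartialAOwDerivs}.
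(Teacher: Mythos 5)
Your outline captures the right ambient strategy (definability of the restricted period map, a Galois lower bound against a Pila--Wilkie upper bound, and an Ax--Lindemann input to dispatch the algebraic part), but it contains a gap at exactly the step where the paper does something genuinely new, and as written the Galois step does not go through.

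The central problem is that a Galois conjugate of a quadratic point $\tau$ does \emph{not} give a new point of your set $Z=\{\tau\in\mathcal{F}^n:J(\tau)\in V\}$. Proposition \ref{propn:GaloisOrbits} provides Galois equivariance only for $j$ and $\chi^*$: if $\sigma$ is a Galois automorphism and $j(\tau')=\sigma(j(\tau))$, then $\chi^*(\tau')=\sigma(\chi^*(\tau))$. There is no such statement for $j'$ or $j''$ --- these values are generically transcendental at quadratic arguments, so $j'(\tau')$ is in no useful sense a conjugate of $j'(\tau)$, and $J(\tau')\in V$ does not follow from $J(\tau)\in V$. Your sentence ``modularity provides a representative in $\mathcal{F}^n$ for each conjugate... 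This produces at least $\prod_i D_i^{1/2-\epsilon}$ distinct quadratic points in $Z$'' is therefore unjustified: those representatives need not land in $Z$ at all. This is not a minor technicality --- it is the obstruction the entire notion of \emph{adjacency} in Section 2 is designed to overcome. The paper does not count points of $J^{-1}(V)$; it counts points of a modified definable set $\mathcal{Z}_{\sigma,g}$ in which the $j''$-coordinate is replaced by $p_c(j,\chi^*,j')$ for a parameter $c$, and the $j'$-coordinate is rescaled by $m_g$-factors. That set is invariant under the Galois action on $(j,\chi^*)$ and under $\slz$-translation of the matrices, precisely because the transcendental data $j'$ and the imaginary parts have been promoted to free parameters. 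Correspondingly, the Ax--Lindemann statement one then needs (Theorem \ref{propn:AxLindOverQ}) is not an Ax--Lindemann for $J^{-1}(V)$ but an Ax--Lindemann \emph{for adjacency}, and its output is a special variety $G$ with $G\hookrightarrow V$ rather than $J(G)\subseteq V$.

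Relatedly, your account of where $j'$-genericity enters is not quite right. In the paper it is used to pin down the shape of the $\overline{\mathbb{Q}}$-Zariski closure $\JClose{\tau}$: genericity guarantees that each orbit block contributes an independent free parameter $w_i$, which is what lets one rewrite the condition $\JClose{\tau}\subseteq V$ as an algebraic condition $V_{\sigma,g}$ on $(j,\chi^*)(\tau)$ alone. It is not, as you suggest, the reason the class numbers from distinct blocks multiply --- that is elementary Galois theory and does not need $j'$-genericity. Finally, even after fixing the counting step, the reduction from ``bounded complexity'' to a finite $\sigma(V)$ is not as simple as listing $\slz$-classes: the paper needs Proposition \ref{propn:BasicSpecialCount} to bound the basic special varieties adjacent to $V$, and an induction on $n$ (passing from $V$ to the varieties $V_k$ classifying translates of $\mathbb{C}^{3k}$ contained in $V$) to account for positive-dimensional special subvarieties. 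That organizational step is also absent from your proposal.

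In short: you have correctly identified the difficulty in your last paragraph, but the proposal stops exactly where the paper's contribution begins. To repair it you would need either the adjacency framework or some substitute definable set on which Galois really does act, together with an Ax--Lindemann theorem matched to that set.
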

Note in particular that any special point of the form $(g_1\sigma_1,\dots,g_{n}\sigma_1)$, with $g_i\in\gl$, is automatically $j'$-generic, unless $g_1\sigma_1$ is in the $\slz$-orbit of $i$ or $\rho$.  Hence we have the following easy corollaries.  
\begin{cor}
 Let $V\suq\mathbb{C}^{3n}$ be a proper algebraic variety defined over $\overline{\mathbb{Q}}$.  There exists an $\slz$-finite collection $\sigma(V)$, consisting of \emph{proper} $\uh$-special varieties of $\uh^n$, with the following property.  Every $\uh$-special point of the form $(g_1\sigma,\dots,g_{n}\sigma)\in J^{-1}(V)$, $g_i\in\gl$, is contained in some $G\in\sigma(V)$.
 \begin{proof}
  This follows directly from \ref{thrm:PartialAOwDerivs}, except for the possible existence of points with a coordinate lying in $\slz\cdot\{i,\rho\}$.  But any such points are automatically contained within $\slz$-finitely many proper $\uh$-special varieties.
 \end{proof}
\end{cor}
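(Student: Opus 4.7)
The plan is to reduce to Theorem \ref{thrm:PartialAOwDerivs} and handle separately those points that do not satisfy its $j'$-generic hypothesis. A point of the form $\tau = (g_1\sigma, \dots, g_n\sigma)$ with $g_i \in \gl$ has all coordinates in the single $\gl$-orbit of $\sigma$, so in the decomposition used to define \emph{$j'$-generic} there is only $k = 1$ orbit class, with representative $\sigma_1 = g_1\sigma$. Hence the $j'$-generic condition for $\tau$ reduces to the single requirement that $j'(g_1\sigma)$ be transcendental over $\overline{\mathbb{Q}}$, which by the cited result of Diaz \cite{Diaz2000} is equivalent to $g_1\sigma \notin \slz \cdot \{i, \rho\}$.

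In the generic sub-case $g_1\sigma \notin \slz \cdot \{i, \rho\}$, the point $\tau$ is $j'$-generic, so Theorem \ref{thrm:PartialAOwDerivs} applies and places $\tau$ inside an $\slz$-finite collection $\sigma_0(V)$ of proper $\uh$-special varieties; this handles all generic points in one go.

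In the exceptional sub-case $g_1\sigma \in \slz \cdot \{i, \rho\}$, write $g_1\sigma = \gamma q$ with $\gamma \in \slz$ and $q \in \{i, \rho\}$. Then $\tau$ lies in the $\uh$-special variety $\{\gamma q\} \times \uh^{n-1}$ (a single imaginary quadratic constant coordinate with all others free), which is the $\slz^n$-translate $(\gamma, 1, \dots, 1) \cdot (\{q\} \times \uh^{n-1})$. So the $\slz^n$-orbits of the two proper $\uh$-special varieties $\{i\} \times \uh^{n-1}$ and $\{\rho\} \times \uh^{n-1}$ cover every exceptional point (with the obvious reading $\{i\}, \{\rho\}$ when $n = 1$). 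Augmenting $\sigma_0(V)$ with these two extras yields an $\slz$-finite collection with the required property. All of the real work is in Theorem \ref{thrm:PartialAOwDerivs}; the exceptional analysis is just bookkeeping based on Diaz's transcendence result and a one-line orbit computation, so I do not anticipate any real obstacle in this deduction.
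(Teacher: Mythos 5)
Your proof is correct and takes essentially the same approach as the paper's: reduce to Theorem \ref{thrm:PartialAOwDerivs} for the $j'$-generic points and observe that the non-$j'$-generic ones (those with the chosen representative in $\slz\cdot\{i,\rho\}$) are absorbed into $\slz$-finitely many proper $\uh$-special subvarieties. Your write-up is just a more explicit version of the paper's two-line argument, spelling out the orbit bookkeeping that the paper leaves to the reader.
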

\begin{cor}
 Let $V\suq\mathbb{C}^3$ be a proper algebraic variety defined over $\overline{\mathbb{Q}}$.  Then $J^{-1}(V)$ contains only $\slz$-finitely many $\uh$-special points.
\end{cor}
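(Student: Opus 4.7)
The plan is to reduce to Theorem \ref{thrm:PartialAOwDerivs} in the case $n=1$ and then unpack what a proper $\uh$-special subvariety of $\uh$ actually is. First I would observe that for $n=1$ the only partitions available in Definition \ref{defn:geodesics} are $S_0=\{1\}$ with $k=0$ (giving a singleton $\{q_1\}$, which is $\uh$-special exactly when $q_1$ is imaginary quadratic) and $S_1=\{1\}$ with $k=1$ and no non-trivial geodesic matrices (giving all of $\uh$). So the proper $\uh$-special subvarieties of $\uh$ are precisely the singletons of imaginary quadratic points, and an $\slz$-finite collection of these is the same thing as an $\slz$-finite collection of quadratic points.

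Next I would split the $\uh$-special points in $J^{-1}(V)$ into two classes. Points in $\slz\cdot\{i,\rho\}$ comprise only two $\slz$-orbits and so contribute an $\slz$-finite set automatically. For any remaining quadratic $\tau\notin\slz\cdot\{i,\rho\}$, the theorem of Diaz cited in the excerpt gives that $j'(\tau)$ is transcendental over $\overline{\mathbb{Q}}$. Since $n=1$, the decomposition in the definition of $j'$-genericity has only the single block $k=1$ with $\sigma_1=\tau$, so the condition reduces exactly to the transcendence of $j'(\tau)$; hence every such $\tau$ is $j'$-generic. Theorem \ref{thrm:PartialAOwDerivs} then places each such $\tau$ inside some $G\in\sigma(V)$, which by the first paragraph is a singleton $\{q\}$ with $q$ quadratic, forcing $\tau=q$.

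Combining, the $j'$-generic $\uh$-special points of $J^{-1}(V)$ lie in an $\slz$-finite set, and the remaining $\uh$-special points lie in $\slz\cdot\{i,\rho\}$, which is $\slz$-finite. Hence $J^{-1}(V)$ contains only $\slz$-finitely many $\uh$-special points. The only non-trivial input is Theorem \ref{thrm:PartialAOwDerivs} itself; everything else is a one-dimensional classification and a direct appeal to the transcendence of $j'$ away from $\slz\cdot\{i,\rho\}$, so I do not anticipate any real obstacle beyond stating these observations cleanly.
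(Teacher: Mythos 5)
Your proposal is correct and follows the same line as the paper, which simply notes that this corollary is the $n=1$ case of Theorem \ref{thrm:PartialAOwDerivs} (via the observation preceding the corollaries that a point $(g_1\sigma_1,\dots,g_n\sigma_1)$ is automatically $j'$-generic unless $g_1\sigma_1\in\slz\cdot\{i,\rho\}$). You have merely made explicit the two small points the paper leaves implicit: that for $n=1$ the proper $\uh$-special subvarieties are exactly singletons of imaginary quadratic points, and that $j'$-genericity for $n=1$ reduces to transcendence of $j'(\tau)$, guaranteed by Diaz away from $\slz\cdot\{i,\rho\}$.
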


This last, of course, is simply Conjecture \ref{conj:PartialAOwDerivs} for $n=1$.  To get \ref{conj:PartialAOwDerivs} in full generality, we would need something like the following.\begin{conj}\label{conj:AlgIndep}
 Let $\tau_1,\dots,\tau_n\in\uh$ be quadratic points, lying in distinct $\gl$-orbits, none of which lies in the $\slz$-orbit of $i$ or $\rho$.  Then $j'(\tau_1),\dots,j'(\tau_n)$ are algebraically independent over $\overline{\mathbb{Q}}$.
\end{conj}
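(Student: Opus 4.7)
The strategy is to translate Conjecture \ref{conj:AlgIndep} into a transcendence statement about periods of CM elliptic curves, and then to attempt to apply the relevant machinery. For each quadratic $\tau_i$ set $K_i = \mathbb{Q}(\tau_i)$ and form the CM elliptic curve $E_{\tau_i} = \mathbb{C}/(\mathbb{Z}+\tau_i\mathbb{Z})$. Two quadratic points lie in the same $\gl$-orbit precisely when they generate the same imaginary quadratic field (one checks this directly using the rational Möbius action), so the hypothesis forces the $K_i$ to be pairwise distinct and hence the $E_{\tau_i}$ to be pairwise non-isogenous.

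The first step is to express each $j'(\tau_i)$ in terms of a period of $E_{\tau_i}$. Differentiating $j = 1728\, E_4^3/(E_4^3 - E_6^2)$ and using Ramanujan's relations gives
\[
j'(\tau) \;=\; -2\pi i\cdot 1728\cdot \dfrac{E_4(\tau)^2\, E_6(\tau)}{\Delta(\tau)},
\]
so that $j'$ is a meromorphic modular form of weight $2$ with rational $q$-expansion coefficients. Shimura's theorem on CM-values of modular forms then produces a period $\Omega_{K_i}$, canonically attached to $K_i$ up to a factor in $\overline{\mathbb{Q}}^\times$, together with an algebraic number $\alpha_i$ such that $j'(\tau_i) = \alpha_i\, \Omega_{K_i}^2$. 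The exclusion of $\slz\cdot\{i,\rho\}$ -- the zero loci of $E_6$ and $E_4$ respectively -- guarantees $\alpha_i \ne 0$, so Conjecture \ref{conj:AlgIndep} becomes equivalent to algebraic independence of $\Omega_{K_1},\dots,\Omega_{K_n}$ over $\overline{\mathbb{Q}}$.

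The main obstacle is exactly this periods statement. For $n = 1$, Chudnovsky's theorem gives algebraic independence of $\pi$ and $\Omega_K$, but joint algebraic independence for periods attached to \emph{distinct} imaginary quadratic fields appears to be well beyond current transcendence methods: via the Chowla--Selberg formula, the $\Omega_{K_i}$ are essentially products of values of $\Gamma$ at rationals with denominator $|d_{K_i}|$, and the desired statement is a special case of the Lang--Rohrlich conjecture on $\Gamma$-values. A partial attack would be via W\"ustholz's analytic subgroup theorem applied to $E_{\tau_1}\times\cdots\times E_{\tau_n}$: any polynomial relation among the $\Omega_{K_i}$ produces a proper $\overline{\mathbb{Q}}$-algebraic subgroup of the product, which, by non-isogeny of the factors, must sit in a proper product of factors and so formally reduces the problem by induction to the case $n = 2$. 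Even this two-variable case remains open, however, and the hard part -- establishing the periods independence unconditionally -- seems to require new transcendence input, genuinely outside the reach of the o-minimal Pila--Zannier methods used elsewhere in this paper.
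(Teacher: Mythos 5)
This statement is labelled as a conjecture in the paper, and indeed neither the paper nor your proposal proves it. The paper's entire ``treatment'' of Conjecture \ref{conj:AlgIndep} is the single observation that it follows from Bertolin's elliptico-toric conjecture (CET), itself a special case of the Grothendieck--Andr\'e period conjecture. Your proposal is therefore not being compared against a proof; rather, it should be read as an alternative heuristic reduction, and on those terms it is essentially the same reduction in different clothing: you pass from $j'(\tau_i)$ (a weight-2 form with rational $q$-expansion, up to $2\pi i$) to CM periods $\Omega_{K_i}$ via Shimura, and then observe that independence of the $\Omega_{K_i}$ is the (open) special case of Lang--Rohrlich / Grothendieck--Andr\'e for non-isogenous CM curves. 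CET specializes to exactly this periods statement, so you and the paper are pointing at the same conjectural input. Your honest conclusion -- that the periods statement is beyond current transcendence technology and certainly beyond the o-minimal methods in this paper -- matches the paper's own stance.

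Two small inaccuracies worth flagging, though neither affects the upshot. First, the equivalence you state is slightly off: since $j'(\tau_i)=2\pi i\,\alpha_i\,\Omega_{K_i}^2$, the correct equivalent is algebraic independence of $\pi,\Omega_{K_1},\dots,\Omega_{K_n}$ (which is what CET would give), not of the $\Omega_{K_i}$ alone; the factor of $\pi$ must be carried along. Second, the ``formal reduction to $n=2$'' via W\"ustholz's analytic subgroup theorem does not work as described: the analytic subgroup theorem controls $\overline{\mathbb{Q}}$-\emph{linear} relations among periods via algebraic subgroups, but a \emph{polynomial} relation among the $\Omega_{K_i}$ does not produce an algebraic subgroup of $E_{\tau_1}\times\cdots\times E_{\tau_n}$, so the induction you sketch is not a valid deduction from that theorem. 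Also a trivial typo: your displayed formula for $j'$ has an extra factor of $1728$ if $\Delta$ denotes the normalized discriminant $(E_4^3-E_6^2)/1728$.
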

Otherwise put: ``All quadratic points in $\uh^n$ are $j'$-generic, except those with a coordinate in the $\slz$-orbit of $i$ or $\rho$.''  In this form, it is easy to see that the conjecture, together with Theorem \ref{thrm:PartialAOwDerivs}, implies \ref{conj:PartialAOwDerivs}.

\begin{thrm}
 Assume Conjecture \ref{conj:AlgIndep}.  Then Conjecture \ref{conj:PartialAOwDerivs} holds.
 \begin{proof}
  Immediate from Theorem \ref{thrm:PartialAOwDerivs}.
 \end{proof}

\end{thrm}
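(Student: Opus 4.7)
The plan is to reduce Conjecture \ref{conj:PartialAOwDerivs} to Theorem \ref{thrm:PartialAOwDerivs} by using Conjecture \ref{conj:AlgIndep} to eliminate the $j'$-genericity hypothesis. Read contrapositively, Conjecture \ref{conj:AlgIndep} says that every $\uh$-special point is $j'$-generic except those with some coordinate in $\slz\cdot\{i,\rho\}$. The exceptional locus is easily covered by finitely many $\slz^n$-orbits of proper $\uh$-special varieties, for example the coordinate slices $\{\tau_s=i\}$ and $\{\tau_s=\rho\}$ for $s=1,\dots,n$.

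Given a proper $\overline{\mathbb{Q}}$-algebraic $V\suq\mathbb{C}^{3n}$, I would apply Theorem \ref{thrm:PartialAOwDerivs} to obtain an $\slz$-finite collection $\sigma_0(V)$ covering the $j'$-generic $\uh$-special points of $J^{-1}(V)$, and then enlarge it to $\sigma(V)$ by adjoining the exceptional varieties above. The enlarged collection remains $\slz$-finite, and under Conjecture \ref{conj:AlgIndep} it now covers \emph{every} $\uh$-special point of $J^{-1}(V)$: $j'$-generic points are handled by $\sigma_0(V)$ via the theorem, while the remaining points lie in the adjoined exceptional varieties by construction.

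Finally, let $W\suq V$ be a $J$-special subvariety, arising as an irreducible component of $\JClose{G_0}$ for some $\uh$-special $G_0$. The $\uh$-special points of $G_0$, obtained by quadratic specialisation of its free coordinates, form a Zariski-dense subset whose image under $J$ is $\overline{\mathbb{Q}}$-Zariski dense in $\JClose{G_0}$. Each such point lies in some element of $\sigma(V)$ by the previous paragraph, and a pigeonhole argument combined with $\slz$-finiteness of $\sigma(V)$ and the fact that intersections of $\uh$-special varieties are weakly $\uh$-special should pin down a single $G\in\sigma(V)$ with $G_0\suq G$. Then $W\suq\JClose{G_0}\suq\JClose{G}$, as required. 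The main technical step — and in my view the only non-trivial one — is this final density-plus-pigeonhole argument extracting a single containing $G$ from the collective covering of the $\uh$-special points of $G_0$ by $\sigma(V)$.
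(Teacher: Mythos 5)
Your proposal is correct and takes the same route the paper has in mind — under Conjecture \ref{conj:AlgIndep} the $j'$-genericity hypothesis becomes vacuous away from $\slz\cdot\{i,\rho\}$, one adjoins coordinate slices $\{\tau_s\in\slz\cdot\{i,\rho\}\}$ to handle the exceptional locus, and then one passes from the point-level statement of Theorem \ref{thrm:PartialAOwDerivs} to the subvariety-level statement of Conjecture \ref{conj:PartialAOwDerivs} via density of quadratic points. You are right that the only step requiring real justification is the ``density plus pigeonhole'' step: if $G_0$ is a positive-dimensional $\uh$-special variety with $J(G_0)\suq V$ and $G_0\not\suq G$ for every $G\in\sigma(V)$, then each $G_0\cap G$ is a \emph{proper} weakly $\uh$-special subvariety of $G_0$, and since $\sigma(V)=\{\gamma T:T\in\mathcal{T},\gamma\in\slz^n\}$ with $\mathcal{T}$ finite, one checks (by inspecting the possible geodesic and constant constraints a translate $\gamma T$ can impose on the free parameters of $G_0$ — a finite double-coset computation) that the resulting intersections meet only $\slz$-finitely many $\slz$-orbits of quadratic points of $G_0$, contradicting the fact that $G_0$ has $\slz$-infinitely many such orbits. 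This is precisely the content the paper compresses into the word ``immediate.''
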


Should we believe Conjecture \ref{conj:AlgIndep}?  It is quite strong, having a similar flavour to existing modular Schanuel statements; but it does fit into the existing body of conjectures.  See, for instance, Bertolin's elliptico-toric conjecture (CET) \cite{Bertolin2002204}, from which Conjecture \ref{conj:AlgIndep} follows immediately.  In turn, CET is a special case of the Grothendieck-Andr\'e period conjecture.  So \ref{conj:AlgIndep} fits well with what we might expect.  It is also much stronger than we need; to get Conjecture \ref{conj:PartialAOwDerivs}, various weaker transcendence statements would suffice.  The weaker statements are much less clean to state and fit less obviously into the existing literature, so we stick with Conjecture \ref{conj:AlgIndep} for this paper.\\

The paper is broken down as follows.  Section 2 is dedicated towards proving some Ax-Lindemann type results.  In this section lies the primary novelty of the paper; the Ax-Lindemann results we prove here are necessarily of a very new and unusual shape, in order to account for some problems that arise with point-counting.  This section is completely independent from Conjecture \ref{conj:AlgIndep} and does not involve considerations of $j'$-genericity.  It is in section 3, where we discuss the point-counting aspects, that the issue of $j'$-genericity arises.  Section 4 brings everything together to conclude the proof of \ref{thrm:PartialAOwDerivs}.  The penultimate section of the paper, section 5, is dedicated towards proving a more precise version of Conjecture \ref{conj:PartialAOwDerivs}, under the assumption of Conjecture \ref{conj:AlgIndep}.  In the final section we apply this more precise result, together with a slight adaptation of work of Scanlon \cite{Scanlon2004}, to produce versions of our results which are uniform in algebraic families.\\
\\
\textbf{Acknowledgements.} I would like to take the opportunity to thank Jonathan Pila for his invaluable guidance and supervision, as well as for many excellent suggestions on the subject of this paper.  I would also like to thank Sebastian Eterovi\'c for several useful conversations on these topics.  Last but certainly not least, thanks go to my father Derek for our many discussions and his keen proof-reading eyes!

\section{Ax-Lindemann}
\subsection{Technicalities}
We begin with a few crucial definitions.

\begin{defn}
 A subset of $\uh^n$ is a \emph{linear variety} if, up to reordering of coordinates, it takes the form
 \begin{equation}\label{eqn:BasicLinearSub}G=\{(g_{1,1}\tau_1, \dots g_{1,r_1}\tau_1,\dots,g_{k,1}\tau_k,\dots,g_{k,r_k}\tau_k,t_{0,1},\dots,t_{0,r_0}):\tau_i\in\uh\},\end{equation}
 where $t_{0,j}\in\uh$ and $g_{i,j}\in\slr$.
 A linear variety is called \emph{basic} if $r_0=0$.  So a basic linear variety is defined by finitely many matrices $g_{i,j}\in\slr$, together with a permutation of the coordinates.  We will often suppress the permutation of coordinates, assuming for simplicity that the variety takes exactly the form in (\ref{eqn:BasicLinearSub}).
 
 Any linear variety $G\suq\uh^n$ has an underlying basic variety attached to it, namely the $B\suq \uh^k$ attained by ignoring the constant coordinates $t_{0,j}$.  We say that $G$ is a \emph{translate} of $B$ (by the $t_{0,j}$), or that $B$ is the basic variety underlying $G$.
\end{defn}

As we will see in later sections, the usual counting methods don't work out perfectly when applied to the derivatives problem.  One would like to be able to count linear subvarieties contained in $J^{-1}(V)$, but the methods are slightly too coarse for this.  One can, however, count those varieties which are, in some sense, \emph{approximately} in $J^{-1}(V)$.  This motivates the following.

\begin{defn}
 Let $V\suq\mathbb{C}^{3n}$ be an algebraic variety and $B\suq\uh^k$ a basic linear variety, given by data $g_{i,j}$ as in (\ref{eqn:BasicLinearSub}).  For each $g_{i,j}$, take a complex number $z_{i,j}$ and a real $c_{i,j}>0$.  Also take $n-k$ triples of complex numbers $(w_i,x_i,y_i)$.  

 We say $B$ is \emph{adjacent to} $V$ \emph{via} $z_{i,j},c_{i,j},w_i,x_i,z_i$ if for all $\tau_i$, we have, up to permutation of coordinates,
 \[\Biggl[\dots,j(g_{i,j}\tau_i),\dfrac{j'(g_{i,j}\tau_i)z_{i,j}}{m_{g_{i,j}}(\tau_i)},p_{c_{i,j}}\left(j(g_{i,j}\tau_i),\chi^*(g_{i,j}\tau_i),\dfrac{j'(g_{i,j}\tau_i)z_{i,j}}{m_{g_{i,j}}(\tau_i)}\right),\dots,w_i,x_i,y_i,\dots\Biggr]\in V.\]

 If this holds for some choice of the possible data $z_{i,j}$, $c_{i,j}$, $w_i$, $x_i$, $z_i$, we simply say $B$ is adjacent to $V$, and write
 \[B\hookrightarrow V.\]
 Finally, if $G$ is a translate of a basic linear variety $B$ by $(\sigma_1,\dots,\sigma_d)$, we say $G$ is adjacent to $V$ if $B$ is adjacent to $V$ via any $z_{i,j}$, any $c_{i,j}$ and $(w_i,x_i,z_i)=J(\sigma_i)$.   In this case we again write
 \[G\hookrightarrow V.\]
\end{defn}
This rather intricate definition turns out to be crucial in carrying out a suitable variant of the usual o-minimal strategy used for Andr\'e-Oort problems.  This is the main new idea of the paper: we follow the typical Pila-Zannier strategy for diophantine problems of this type, but rather than counting those $\uh$-special varieties which are contained in $J^{-1}(V)$ directly, we instead count the $\uh$-special varieties which are adjacent to $V$.  The notion of adjacency is constructed so as to be definable, invariant under the action of $\slz$ (on the $g_{i,j}$) and invariant under Galois action (on $j$ and $\chi^*$); those three conditions are precisely what we need to make the strategy work.\\

The first step of the strategy and the primary goal of this this section is to prove an ``Ax-Lindemann-style'' result pertaining to the notion of adjacency.  The idea is that the only linear varieties (or more generally, real algebraic arcs) that can be adjacent to some variety $V$ should be accounted for by weakly $\uh$-special varieties.  Given our overall goal - to count those linear varieties which are adjacent to $V$ - this problem is directly analogous to the usual Ax-Lindemann theorems needed in the classical case.  Unsurprisingly, we will need to use some existing Ax-Lindemann results.

\begin{thrm}\label{thrm:AxLindwDerivs}
 Let $S\suq\uh^n$ be an arc of a real algebraic curve and let $G$ be the smallest weakly $\uh$-special variety containing $S$.  Suppose that $G$ is a GUT variety.  (Recall: this simply means that the matrices defining $G$ are upper triangular.)
 
 Let $V\suq\mathbb{C}^{3n+1}$ be an algebraic variety such that
 \[(\tau_1,J(\tau_1),\dots,J(\tau_n))\in V\]
 for all $(\tau_1,\dots,\tau_n)\in S$.  Then in fact this holds for all $(\tau_1,\dots,\tau_n)\in G$.
\end{thrm}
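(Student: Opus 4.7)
The plan is to reduce the theorem to a known Ax-Lindemann type statement for $J$ by exploiting the GUT hypothesis on $G$ to eliminate all of the non-essential coordinates, and then to apply Pila's Ax-Lindemann theorem (or its Pila-Tsimerman refinement) in a form that accommodates the free coordinate $\tau_1$. The GUT hypothesis is doing real work: for $g \in \gl$ upper triangular, the factor $m_g(\tau)$ is a nonzero constant, so the defining relations $j'(g\tau) = -\lambda_N(j(\tau),j(g\tau))\,j'(\tau)\,m_g(\tau)$ and the analogous one for $j''$ become algebraic relations over $\mathbb{C}$ rather than $\mathbb{C}(\tau)$. Similarly $\chi^*$ at $g\tau$ is algebraic over $\chi^*(\tau)$ with $\mathbb{C}$-coefficients. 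Hence for each $s \in S_i \setminus \{s_i\}$, the triple $J(\tau_s)$ is algebraic over $J(\tau_{s_i})$ with coefficients in $\mathbb{C}$, and for each $s \in S_0$ the triple $J(q_s)$ is simply a constant.

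Using this, I would rewrite the membership condition $(\tau_1,J(\tau_1),\dots,J(\tau_n)) \in V$ on $G$ as a condition $(\tau_1,J(\tau_{s_1}),\dots,J(\tau_{s_k})) \in V'$ on $\mathcal{H}^k$ for some algebraic $V' \subseteq \mathbb{C}^{3k+1}$. Then I would split into cases by where the index $1$ sits in the partition. If $1 \in S_0$, then $\tau_1 = q_1$ is a fixed quantity and $V'$ reduces to a condition involving only the $J(\tau_{s_i})$; classical Ax-Lindemann for $J$ applies directly. Otherwise $1 \in S_i$ for some $i \geq 1$, and after relabeling we may take $i = 1$; if further $1 \neq s_1$ then $\tau_1 = g_{1,1}\tau_{s_1}$ is an affine function of $\tau_{s_1}$ (here we use upper-triangularity again), which can be absorbed into $V'$ so that we may assume $\tau_1 = \tau_{s_1}$. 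Thus the problem collapses to the following: an algebraic $V' \subseteq \mathbb{C}^{3k+1}$ contains $(\tau_{s_1},J(\tau_{s_1}),\dots,J(\tau_{s_k}))$ for all $(\tau_{s_1},\dots,\tau_{s_k})$ in the projection $S'$ of $S$ to the base coordinates, and one must show this continues to hold on all of $\mathcal{H}^k$. The minimality of $G$ exactly says that the smallest weakly $\mathcal{H}$-special variety of $\mathcal{H}^k$ containing $S'$ is $\mathcal{H}^k$ itself, so in the Ax-Lindemann conclusion no modular constraint among the $\tau_{s_i}$ can appear.

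The final step is to invoke an Ax-Lindemann theorem for the enriched map $(\tau_1,\dots,\tau_k) \mapsto (\tau_1,J(\tau_1),\dots,J(\tau_k))$. The bare Ax-Lindemann for $J$ of Pila does not include $\tau_1$ as a coordinate, but this extension falls out of Pila-Tsimerman's Ax-Schanuel for $J$: since $\tau,j(\tau),j'(\tau),j''(\tau)$ are algebraically independent over $\mathbb{C}$, any algebraic constraint relating $\tau_1$ to the $J$-values forces, by Ax-Schanuel, a modular relation among the $\tau_{s_i}$, which we have just excluded. Combining this with the reduction above yields $(\tau_1,J(\tau_1),\dots,J(\tau_n)) \in V$ on all of $G$.

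The main obstacle I expect is precisely this last step: carefully deducing an Ax-Lindemann statement with the $\tau_1$-coordinate present, with the correct notion of "weakly $\mathcal{H}$-special" on the source side, from either the existing Pila Ax-Lindemann for $J$ or the Pila-Tsimerman Ax-Schanuel. The rest of the argument is essentially bookkeeping driven by the GUT hypothesis; the novelty (and the reason the theorem is framed in $\mathbb{C}^{3n+1}$ rather than $\mathbb{C}^{3n}$) is that the counting application in later sections requires $\tau_1$ to appear as an explicit real/complex variable in the definable set whose algebraic points we want to count.
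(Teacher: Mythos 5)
The paper does not prove this theorem; it is quoted as known and cited to \cite{Spence2016}, with the note that the bulk of the work is due to Pila in \cite{Pila2013}. So there is no in-paper argument to compare against. That said, your outline is a sensible reconstruction of the result: exploit the upper-triangularity hypothesis to reduce to the base coordinates $\uh^k$ (constancy of $m_g$ turns the $j'$- and $j''$-relations into relations over $\mathbb{C}$ rather than $\mathbb{C}(\tau)$), then apply an Ax-Lindemann statement for the enriched map $(\tau_1,\dots,\tau_k)\mapsto(\tau_1, J(\tau_1),\dots,J(\tau_k))$.

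Two cautions are in order. First, the reduction is not a genuine elimination: $j(g\tau)$ is one branch of $\Phi_N(j(\tau),\cdot)=0$, not a global algebraic function of $j(\tau)$, and similarly for $j'(g\tau)$ and $j''(g\tau)$. The cleaner move is to \emph{retain} the $J(\tau_s)$-coordinates and append the defining polynomial relations (the $\Phi_{N_{i,s}}$, the $j'$-relation with constant $m_{g_{i,s}}$, and its $j''$-analogue) to $V$, rather than literally replacing $V$ by some $V'\subseteq\mathbb{C}^{3k+1}$; as written, your $V'$ is only a constructible projection, and the downstream Ax-Lindemann application needs a corresponding adjustment. Second, your final invocation is less exotic than you suggest: Ax-Lindemann-Weierstrass statements with derivatives, such as that of \cite{Pila2013}, are ordinarily formulated with the source variable present alongside the values (exactly as in the classical Lindemann-Weierstrass for $\exp$), so the version you want is very likely already available directly from \cite{Pila2013} rather than requiring the later and stronger Ax-Schanuel theorem of Pila-Tsimerman. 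Your instinct that this last step is the crux is correct, and the Ax-Schanuel route is a valid way through it, but it is not the most economical and does not match the chronology of the sources the paper actually cites.
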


\begin{thrm}\label{thrm:NonholomorphicAxLind}
 Let $S\suq\uh^n$ be an arc of a real algebraic curve and let $G$ be the smallest weakly $\uh$-special variety containing $S$.
 Let $V\suq\mathbb{C}^{2n}$ be an algebraic variety such that
 \[(\dots,j(\tau_j),\chi^*(\tau_j),\dots)\in V\]
 for all $(\tau_1,\dots,\tau_n)\in S$.  Then in fact this holds for all $(\tau_1,\dots,\tau_n)\in G$.
\end{thrm}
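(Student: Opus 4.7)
The plan is to imitate Pila's o-minimal proof of the classical modular Ax-Lindemann theorem, incorporating the almost-holomorphic function $\chi^*$ via the techniques developed in the author's earlier work \cite{Spence2016}. My first observation is that $\chi^*$ is genuinely $\slz$-invariant: the numerator $E_2^*E_4E_6$ and denominator $E_4^3-E_6^2$ are both of weight $12$, so their ratio $\chi^*$ has weight $0$ and therefore descends to a function on $\slz\backslash\uh$. Choosing a standard fundamental domain $\mathcal{F}$ for $\slz$, both $j$ and $\chi^*$ are definable on $\mathcal{F}$ in $\Raxp$: for $j$ this is classical, and for $\chi^*$ it follows at once from the decomposition $\chi^*=\chi-\tfrac{3}{\pi y}f$, together with the definability of the holomorphic modular functions $\chi$ and $f$ on $\mathcal{F}$ and of $1/y$. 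Consequently, the real-analytic set
\[T=\{\tau\in\uh^n:(j(\tau_i),\chi^*(\tau_i))_i\in V\}\]
is $\slz^n$-invariant, and its restriction to $\mathcal{F}^n$ is $\Raxp$-definable.

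Now I would assume toward a contradiction that $G\not\subseteq T$ and derive a contradiction via a Pila--Wilkie counting argument in the style of Pila's proof of the classical Ax-Lindemann theorem. The $\slz^n$-invariance of $T$ gives $\sigma\cdot S\subseteq T$ for every $\sigma\in\slz^n$; pulling each $\sigma\cdot S$ back into $\mathcal{F}^n$ yields a definable family of real algebraic arcs lying inside $T\cap\mathcal{F}^n$. The block-family version of Pila--Wilkie then produces a positive-dimensional semialgebraic piece inside the preimage which, after $\slz^n$-saturation, is contained in a weakly $\uh$-special subvariety $G'\subseteq T$ that also contains $S$; by the minimality of $G$ we would get $G\subseteq G'\subseteq T$, contradicting our assumption.

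The principal obstacle is that $\chi^*$ is not holomorphic, so the usual identity-theorem arguments which turn a single arc inside the zero-set of a polynomial in $j$-coordinates into a complex-analytic subvariety do not apply coordinate-by-coordinate. To handle this, I would separate the $j$-coordinates from the $\chi^*$-coordinates and apply, respectively, Pila's classical modular Ax-Lindemann theorem and its $\chi^*$-analogue from \cite{Spence2016} to each block, using the decomposition $\chi^*=\chi-\tfrac{3f}{\pi y}$ to keep explicit track of the (semialgebraic) $y$-dependence at the interface. Making this coupling precise, in a way compatible with the block structure produced by Pila--Wilkie and with the weakly $\uh$-special structure of $G$, is where I expect the substantive work to lie.
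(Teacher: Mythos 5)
The paper does not prove this theorem: immediately after stating Theorems \ref{thrm:AxLindwDerivs} and \ref{thrm:NonholomorphicAxLind} it simply says ``Both of the above were proven in \cite{Spence2016}'' and moves on, so there is no in-paper argument to compare against. Your proposal is therefore a reconstruction, and most of the skeleton is sound: the observation that $\chi^*$ has weight $0$ (numerator and denominator both weight $12$) and is therefore genuinely $\slz$-invariant is correct and is exactly why this theorem, unlike \ref{thrm:AxLindwDerivs}, needs no GUT hypothesis; the definability of $j$ and $\chi^*=\chi-\tfrac{3f}{\pi y}$ on $\mathcal{F}$ in $\Raxp$ is also correct; and the Pila--Wilkie block-family framework is the right engine.

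The gap is in your final paragraph, and it is not small. First, your plan to close the argument by applying ``Pila's classical modular Ax--Lindemann theorem and its $\chi^*$-analogue from \cite{Spence2016} to each block'' is circular: the $\chi^*$-analogue from \cite{Spence2016} \emph{is} Theorem \ref{thrm:NonholomorphicAxLind}, the very statement you are trying to prove. Second, there is no ``clean separation'' into a $j$-block and a $\chi^*$-block to which one can apply two independent Ax--Lindemann statements: a single defining polynomial of $V$ mixes $j(\tau_i)$ and $\chi^*(\tau_i)$ in the same equation, and after substituting $\chi^*=\chi-\tfrac{3f}{\pi y}$ the real variable $y_i=\Imm\tau_i$ becomes entangled with the holomorphic data. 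The substantive work you defer is precisely this entanglement. What is actually needed (and what the paper's own technical machinery in this very section --- Theorem \ref{thrm:AxLindAllFourHolomorphics}, Corollary \ref{cor:ContinuationInAlgFunctions}, and especially Lemma \ref{lma:yIsAlgebraic} --- is built to handle) is a dichotomy: either the dependence on $y_i$ in the rewritten relation is vacuous, in which case one is reduced to a purely holomorphic Ax--Lindemann statement for $(j,\chi,f)$ and can invoke Pila's theorem together with the algebraic relation among $j,\chi,f$; or else some $y_i$ is forced to be an algebraic function of $(j,\chi,f,\dots)$ along $S$, which one must then show forces $y_i$ to be constant (by a translation/periodicity argument). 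Without an argument of that shape the positive-dimensional algebraic set produced by Pila--Wilkie cannot be upgraded to a weakly $\uh$-special variety, because it lives inside the zero set of a merely real-analytic (not complex-analytic) function.
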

Both of the above were proven in \cite{Spence2016}, though the majority of the work towards \ref{thrm:AxLindwDerivs} was done by Pila in \cite{Pila2013}.  Before we can apply these to prove our central Ax-Lindemann result, we will need some technical lemmas, the first of which is simply a strengthening of \ref{thrm:AxLindwDerivs}.

\begin{thrm}\label{thrm:AxLindAllFourHolomorphics}
 Let $S\suq\uh^n$ be an arc of a real algebraic curve and let $G$ be the smallest weakly $\uh$-special variety containing $S$.  Suppose that $G$ is a GUT variety.
 
 Let $V\suq\mathbb{C}^{4n+1}$ be an algebraic variety such that
 \[(\tau_1,j(\tau_1),j'(\tau_1),\chi(\tau_1),f(\tau_1),\dots,j(\tau_n),j'(\tau_n),\chi(\tau_n),f(\tau_n))\in V\]
 for all $(\tau_1,\dots,\tau_n)\in S$.  Then in fact this holds for all $(\tau_1,\dots,\tau_n)\in G$.
 \begin{proof}
  Let $F$ be a defining polynomial of $V$.  We will represent the various coordinates as follows.
  \begin{itemize}
   \item The $\tau_1$ coordinate will be represented by a variable $T$.
   \item The $j$-coordinates (ie. the 2nd, 6th, coordinates, etc.) will be represented by variables $J_1,\dots,J_n$.
   \item The $j'$-coordinates will be represented by variables $K_1,\dots,K_n$.
   \item The $\chi$-coordinates will be represented by variables $X_1,\dots,X_n$.
   \item The $f$-coordinates will be represented by variables $F_1,\dots,F_n$.
  \end{itemize}
  Since $j,j',\chi$ and $f$ are algebraically dependent, there is an irreducible polynomial $p$ with the property that
  \[p(j(\tau),j'(\tau),\chi(\tau),f(\tau))=0\]
  for all $\tau$.  
  Consider the variety $W\suq\mathbb{C}^{4n+1}$ defined by 
  \[p(J_{i},K_{i},X_{i},F_{i})=0\]
  for each $1\leq i\leq n$.  Clearly $\dim W=3n+1$.  
  
  If we further impose the condition
  \[F(X_1,\dots,X_{4n+1})=0,\]
  there are two possibilities.  Either the resulting variety $W_F$ still has dimension $3n+1$ or it has dimension $3n$.  
  
  If $\dim W_F=3n+1$, it is automatically the case that 
  \[F(\tau_1,\dots,j(\tau_k),j'(\tau_k),\chi(\tau_k),f(\tau_k),\dots)=0\]
  for all $(\tau_1,\dots,\tau_n)\in\uh^n$.
  On the other hand, if $\dim W_F=3n$, then $W_F$ amounts to the imposition of a relation between 3 of the 4 functions.  That is, there are distinct $A,B,C\in \{J,K,X,F\}$ and a polynomial $H$, in $3n+1$ variables, such that $W_F$ is defined by:
  \[(T,J_1,\dots F_{n})\in W\]
  and 
  \[H(T,A_1,B_1,C_1,\dots, A_n,B_n,C_n)=0.\]
  We then have, for the corresponding $f_A,f_B,f_C\in\{j,j',\chi,f\}$ that
  \[H(\tau_1,\dots,f_A(\tau_i),f_B(\tau_i),f_C(\tau_i),\dots)=0\]
  for all $(\tau_1,\dots,\tau_n)\in S$.  By Theorem \ref{thrm:AxLindwDerivs}, this must then hold for all $(\tau_1,\dots,\tau_n)\in G$.  In particular,
  \[F(\tau_1,\dots,j(\tau_i),j'(\tau_i),\chi(\tau_i),f(\tau_i),\dots)=0\]
  for all $(\tau_1,\dots,\tau_n)\in G$.
  
  This holds for each defining polynomial $F$ of $V$, so we're done. 
 \end{proof}
\end{thrm}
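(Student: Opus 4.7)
My strategy is to exploit the algebraic dependence of $j$, $j'$, $\chi$ and $f$ over $\mathbb{C}$ in order to reduce the problem to an instance of Theorem \ref{thrm:AxLindwDerivs}. Since these four functions satisfy an irreducible polynomial relation $p(j,j',\chi,f) \equiv 0$ on $\uh$, the joint image of $\tau \mapsto (j(\tau), j'(\tau), \chi(\tau), f(\tau))$ lies on a one-dimensional algebraic curve in $\mathbb{C}^4$. Consequently the augmented image $(\tau_1,\ldots,\tau_n) \mapsto (\tau_1,j(\tau_1),j'(\tau_1),\chi(\tau_1),f(\tau_1),\ldots,f(\tau_n))$ is confined to a subvariety $W \suq \mathbb{C}^{4n+1}$ of dimension exactly $3n+1$, carved out by the $n$ relations obtained by applying $p$ separately to each block of four function-coordinates.

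The plan is then to fix any defining polynomial $F$ of $V$ and look at $W_F := W \cap V(F)$. Either $\dim W_F = 3n+1$, in which case $W \suq V(F)$ automatically and the required vanishing extends not just to $G$ but to all of $\uh^n$; or $\dim W_F = 3n$, meaning $F$ genuinely cuts the dependence variety by one dimension. In the latter case, after using $p$ to eliminate one of the four function-coordinates per index, the intersection $W_F$ is cut out of $W$ by an additional polynomial identity $H(\tau_1,f_A(\tau_1),f_B(\tau_1),f_C(\tau_1),\ldots,f_A(\tau_n),f_B(\tau_n),f_C(\tau_n)) = 0$ holding along $S$, where $\{f_A,f_B,f_C\}$ is some triple chosen from $\{j,j',\chi,f\}$.

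The last step is to invoke Theorem \ref{thrm:AxLindwDerivs} to upgrade $H \equiv 0$ from $S$ to $G$. Since that theorem is stated for $J = (j,j',j'')$, one needs a translation: the relation $p$, together with the explicit formula $j'' = p_{\Imm\tau}(j,\chi^*,j')$ and $\chi^* = \chi - (3/\pi\Imm\tau)f$, allows one to convert between polynomial identities in any three of $\{j,j',\chi,f\}$ and ones in the coordinates of $J$ (at the cost of possibly shrinking $S$ to avoid the small locus where the conversion degenerates). Once the extension to $G$ is secured, we recover $F \equiv 0$ on the four-function graph over $G$; running over all defining polynomials of $V$ then completes the proof. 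I expect the main obstacle to be precisely this translation step — checking that one can always recast a relation among some three of $\{j,j',\chi,f\}$ as one directly covered by Theorem \ref{thrm:AxLindwDerivs}, without the non-holomorphic correction $(3/\pi\Imm\tau)f$ creating trouble; if it does, Theorem \ref{thrm:NonholomorphicAxLind} can be deployed as a supplementary tool for the subcases mixing $\chi$ and $f$.
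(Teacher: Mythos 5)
Your proposal follows essentially the same route as the paper's proof: construct the ``dependence variety'' $W\suq\mathbb{C}^{4n+1}$ cut out by the single irreducible relation $p(j,j',\chi,f)=0$ applied coordinatewise, split according to whether $\dim W_F=3n+1$ (in which case $F$ already vanishes identically on the graph) or $\dim W_F=3n$, and in the latter case eliminate one function coordinate to produce a polynomial $H$ in $\tau_1$ and three of the four functions, finishing with Theorem \ref{thrm:AxLindwDerivs}.

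The one place you differ is in your concern about the ``translation'' from relations in a triple from $\{j,j',\chi,f\}$ to relations in $J=(j,j',j'')$, and your suggestion that Theorem \ref{thrm:NonholomorphicAxLind} might be needed as a backup. The paper applies Theorem \ref{thrm:AxLindwDerivs} directly without comment, and this is justified because the translation is purely rational over $\mathbb{C}$ and \emph{does not} actually involve $\Imm\tau$: when you substitute $\chi^*=\chi-\tfrac{3f}{\pi\Imm\tau}$ into the identity $j''=p_{\Imm\tau}(j,\chi^*,j')$, the $1/\Imm\tau$ terms cancel (using $f\in\mathbb{C}(j,j')$), leaving $\chi$ as an explicit rational function of $j,j',j''$ over $\mathbb{C}$, and $f$ is already rational in $j,j'$. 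So any relation $H$ in $\tau_1$ and three of $\{j,j',\chi,f\}$ is equivalent (after clearing denominators and away from a lower-dimensional degeneracy locus, which shrinking $S$ handles as you say) to a polynomial relation in $\tau_1,j,j',j''$, and Theorem \ref{thrm:AxLindwDerivs} applies cleanly; no appeal to Theorem \ref{thrm:NonholomorphicAxLind} is needed.
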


\begin{cor}\label{cor:ContinuationInAlgFunctions}
 Let $S\suq\uh^n$ be an arc of a real algebraic curve and let $\phi$ be an algebraic function in $4n+1$ variables.  Let $G$ be the smallest weakly $\uh$-special variety containing $S$, and suppose that $G$ is a GUT variety. 
 Writing 
 \[\tilde\pi(\tau_1,\dots,\tau_n)=(j(\tau_1),j'(\tau_1),\chi(\tau_1),f(\tau_1),\dots,j(\tau_n),j'(\tau_n),\chi(\tau_n),f(\tau_n)),\]
 we suppose that, on some branch of $\phi$,
 \[\phi(\tau_1,\tilde\pi(\tau))=0\]
 for all $\tau=(\tau_1,\dots,\tau_n)\in S$.  Then this holds for all $\tau\in G$, excluding perhaps some exceptional set corresponding to branch points of $\phi$.
 \begin{proof}
  There exists an irreducible polynomial $p$ such that
  \[p(\phi(\mathbf{X}),\mathbf{X})=0\]
  for all $\mathbf{X}$.  Then in particular we have
  \[p(0,\tau_1,\tilde\pi(\tau))=0\]
  for all $\tau=(\tau_1,\dots,\tau_n)\in S$.  By Theorem \ref{thrm:AxLindAllFourHolomorphics}, we have this relation for all $\tau\in G$.  
  
  We can pick a point $\mathbf{q}=(\tau_1,\dots,\tau_n)\in S$, a $G$-open neighbourhood $U$ of $\mathbf{q}$ and a $\mathbb{C}$-open neighbourhood $V$ of 0 with the following property.  Whenever $\tau\in U$, the only root of 
  \begin{equation}\label{eqn:pEquation}p(X,\tau_1,\tilde\pi(\tau))\end{equation}
  lying in $V$ is root 0 (which is a root by the earlier discussion).  Now, for all $\tau\in\uh^n$, we have
  \[p(\phi(\tau_1,\tilde\pi(\tau)),\tau_1,\tilde\pi(\tau))=0\]
  by definition of $p$.  In other words, $\phi(\tau_1,\tilde\pi(\tau))$ is a root of (\ref{eqn:pEquation}).  However, as $\tau\in U$ gets arbitrarily close to $\mathbf{q}$, the value of $\phi(\tau_1,\tilde\pi(\tau))$ gets arbitrarily close to 0.  Hence it eventually lies in $V$.  The only root of (\ref{eqn:pEquation}) within $V$ is 0, whence for some $G$-open neighbourhood of $\mathbf{q}$, we have
  \[\phi(\tau_1,\tilde\pi(\tau))=0.\]
  By analytic continuation, this holds for all $\tau\in G$, excluding some exceptional set corresponding to the branch points and branch cuts of $\phi$.  
 \end{proof}
\end{cor}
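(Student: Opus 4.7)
The plan is to reduce the algebraic statement about $\phi$ to a purely polynomial statement to which Theorem \ref{thrm:AxLindAllFourHolomorphics} applies directly, and then to upgrade the polynomial conclusion back to the algebraic one by an analytic continuation argument. Since $\phi$ is algebraic, there is an irreducible polynomial $p$ in $4n+2$ variables such that $p(\phi(\mathbf{X}),\mathbf{X})\equiv 0$ identically. Substituting $\mathbf{X}=(\tau_1,\tilde\pi(\tau))$ and invoking the hypothesis $\phi(\tau_1,\tilde\pi(\tau))=0$ on $S$, the relation $p(0,\tau_1,\tilde\pi(\tau))=0$ holds for every $\tau\in S$.

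Next, regard the left-hand side $p(0,\tau_1,\tilde\pi(\tau))$ as a polynomial in the variables $\tau_1$ and the values of $j,j',\chi,f$ at the coordinates $\tau_i$. This is precisely the setup of Theorem \ref{thrm:AxLindAllFourHolomorphics}: the variety cut out by this polynomial contains the image of $S$ under $(\tau_1,\dots,\tau_n)\mapsto(\tau_1,j(\tau_1),j'(\tau_1),\chi(\tau_1),f(\tau_1),\dots)$, and so, since $G$ is GUT, it must contain the image of all of $G$. Consequently $p(0,\tau_1,\tilde\pi(\tau))=0$ for all $\tau\in G$.

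The subtlety is that this says only that $0$ is \emph{some} root of the one-variable polynomial $p(X,\tau_1,\tilde\pi(\tau))$; it does not directly say that the specific branch of $\phi$ we began with continues to take the value $0$. To bridge this gap, fix a base point $\mathbf{q}\in S$ and choose a $G$-open neighbourhood $U$ of $\mathbf{q}$ together with a complex disc $V$ about $0$ small enough that, for each $\tau\in U$, the only root of $p(X,\tau_1,\tilde\pi(\tau))$ lying in $V$ is $0$. Such a choice is possible because the roots of a polynomial depend continuously on its coefficients and $0$ is a simple (or at worst isolated) root at $\mathbf{q}$ after shrinking $V$. Since $\phi(\tau_1,\tilde\pi(\tau))$ is continuous on $U$ and vanishes at $\mathbf{q}$, it lies inside $V$ on a smaller neighbourhood, and must therefore equal $0$ there. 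Analytic continuation along paths in $G$ then propagates the identity $\phi(\tau_1,\tilde\pi(\tau))=0$ to all of $G$, with the stated exceptional set arising from the branch locus of $\phi$.

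The main obstacle I expect is precisely this last step of identifying the correct root of $p$ with the chosen branch of $\phi$: Theorem \ref{thrm:AxLindAllFourHolomorphics} is an inherently algebraic extension result, while the hypothesis $\phi(\tau_1,\tilde\pi(\tau))=0$ is branch-dependent, so care is needed to control the root-selection as one moves away from $S$. Everything else is a routine application of the Ax–Lindemann machinery already established.
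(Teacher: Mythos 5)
Your proof is correct and follows essentially the same route as the paper's: pass to the irreducible polynomial $p$ satisfied by $\phi$, apply Theorem \ref{thrm:AxLindAllFourHolomorphics} to the specialisation $p(0,\tau_1,\tilde\pi(\tau))$, then isolate the root $0$ in a small disc near a base point of $S$ to identify the correct branch of $\phi$, and finish by analytic continuation along $G$. The "subtlety" you flag at the end is exactly the step the paper handles in the same way, so there is nothing to reconcile.
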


We conclude this section with one more technical lemma.  While it may appear entirely unmotivated, hopefully the need for such a lemma will become clear during the proof of Theorem \ref{propn:AxLindOverQ}.
\begin{lma}\label{lma:yIsAlgebraic}
 Let $S\suq\uh^n$ be an arc of a real algebraic curve and let $\phi$ be an algebraic function in $4n$ variables.  Suppose that
 \[\Imm\tau_1=\phi(\dots,j(\tau_k),j'(\tau_k),\chi(\tau_k),f(\tau_k),\dots)\]
 for all $(\tau_1,\dots,\tau_n)\in S$.  Let $G$ be the smallest weakly $\uh$-special variety containing $S$, and suppose that $G$ is a GUT variety.  Then $\Imm\tau_1$ is constant on $S$.
 \begin{proof}
  Write $y=\Imm\tau_1$ and suppose that $y$ is nonconstant.  Let us retain the abbreviation 
  \[\tilde\pi(\tau)=(j(\tau_1),j'(\tau_1),\chi(\tau_1),f(\tau_1),\dots,j(\tau_n),j'(\tau_n),\chi(\tau_n),f(\tau_n)).\]
  Then $S$ can be parametrised as
  \[S=\{(x_1(y)+iy,x_2(y)+iy_2(y),\dots,x_n(y)+iy_n(y)):y\in U\}\]
  for some interval $U\suq\mathbb{R}$ and algebraic functions $x_i$, $y_i$.  
  
  Now, take one of the polynomials $p(x_1,y_1,\dots,x_n,y_n)$ defining $S$.  We can write
  \begin{equation}\label{eqn:pT1}p\bigl[\tau_1-i\phi(\tilde\pi(\tau)),\phi(\tilde\pi(\tau)),x_2(\phi(\tilde\pi(\tau))),y_2(\phi(\tilde\pi(\tau))),\dots,x_n(\phi(\tilde\pi(\tau))),y_n(\phi(\tilde\pi(\tau)))\bigr]=0\end{equation}
  for all $\tau=(\tau_1,\dots,\tau_n)\in S$.  Otherwise put, we have an algebraic function $\psi$ such that
  \[\psi(\tau_1,\tilde\pi(\tau))=0\]
  for all $\tau\in S$.  By Corollary \ref{cor:ContinuationInAlgFunctions}, this holds for all $\tau\in G$, whence \ref{eqn:pT1} holds for all $\tau\in G$.\\
  
  By assumption, $G$ is a GUT variety.  Since $y$ is assumed to be nonconstant on $S$, the variable $\tau_1$ cannot be constant on $G$.  So up to permutation of coordinates $G$ looks like
  \[\{(\tau_1,g_2\tau_1,\dots,g_k\tau_1):\tau_1\in\uh\}\times H\]
  for some upper triangular matrices $g_i\in\gl$ and some GUT variety $H$.
  
  For any $\tau_1\in \uh$, $\tau'\in H$ and any $t\in\mathbb{Z}$, we then have
  \[\tau_t:=(\tau_1+t,g_2(\tau_1+t),\dots,g_k(\tau_1+t),\tau')\in G.\]
  Since the $g_i$ are upper triangular, we can find an integer $N$ with the following property.  For every $t\in\mathbb{Z}$, there exists $k\in\mathbb{Z}$ with
  \[g_i(\tau_1+tN)=k+g_i(\tau_1),\]
  for all $i$.  By the periodicity of $j$, $j'$, $\chi$ and $f$, it follows that
  \[\tilde\pi(\tau_{tN})=\tilde\pi(\tau_0)\]
  for all $t\in\mathbb{Z}$.  So for all $\tau=(\tau_1,\dots,\tau_n)\in G$ and all $t\in\mathbb{Z}$ we have
  \[p(\tau_1+tN-i\phi(\tilde\pi(\tau)),\phi(\tilde\pi(\tau)),x_2(\phi(\tilde\pi(\tau))),y_2(\phi(\tilde\pi(\tau))),\dots,x_n(\phi(\tilde\pi(\tau))),y_n(\phi(\tilde\pi(\tau))))=0.\]
  In particular, whenever $\tau=(x_1(y)+iy,x_2(y)+iy_2(y),\dots,x_n(y)+iy_n(y))\in S$, we have
  \[p(x_1(y)+tN,y,x_2(y),y_2(y),\dots,x_n(y),y_n(y))=0.\]
  This holds for \emph{every} polynomial $p$ defining $S$.  Since $S$ has only one real dimension, it must therefore be a horizontal line in the $\tau_1$ coordinate.  That is, $y$ is constant.  Contradiction!
 \end{proof}
\end{lma}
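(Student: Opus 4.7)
The plan is to argue by contradiction: suppose $y:=\Imm\tau_1$ is nonconstant on $S$. Then $y$ can serve as a real-analytic parameter on $S$, so by eliminating $y$ from the polynomials cutting out $S$, there are algebraic functions $x_1,x_2,y_2,\dots,x_n,y_n$ of a single real variable with
\[S=\{(x_1(y)+iy,\ x_2(y)+iy_2(y),\ \dots,\ x_n(y)+iy_n(y)):y\in U\}\]
for some interval $U\suq\mathbb{R}$. Each defining polynomial of $S$ then expresses $\Re\tau_2,\Imm\tau_2,\dots,\Re\tau_n,\Imm\tau_n$ and $\Re\tau_1$ as algebraic functions of $y$.

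Using the hypothesis $y=\phi(\tilde\pi(\tau))$ to eliminate $y$ from these relations, together with the identity $\Re\tau_1=\tau_1-iy$, I can rewrite each such polynomial identity on $S$ in the form $\psi(\tau_1,\tilde\pi(\tau))=0$ for some algebraic function $\psi$ in $4n+1$ variables. Corollary \ref{cor:ContinuationInAlgFunctions} then promotes each such relation from $S$ to all of $G$, off a branch locus of $\psi$.

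Next I would exploit the GUT hypothesis. Since $y$ varies on $S$, the coordinate $\tau_1$ is nonconstant on $G$, so (after permuting coordinates) $G$ decomposes as
\[G=\{(\tau_1,g_2\tau_1,\dots,g_k\tau_1):\tau_1\in\uh\}\times H,\]
with upper triangular $g_2,\dots,g_k\in\gl$ and $H$ a GUT variety in the remaining coordinates. Upper-triangularity yields a positive integer $N$ such that for every $t\in\mathbb{Z}$, the shift $\tau_1\mapsto\tau_1+tN$ translates each $g_i\tau_1$ by an integer. Since $j,j',\chi,f$ are all $1$-periodic, $\tilde\pi$ is invariant under this shift, and hence $\psi(\tau_1+tN,\tilde\pi(\tau))=0$ for every $t\in\mathbb{Z}$. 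Unwinding this back to the parametrisation, each defining polynomial of $S$ continues to vanish when $x_1(y)$ is replaced by $x_1(y)+tN$, for all $t\in\mathbb{Z}$. Since $S$ is one real-dimensional, this can only happen if $\tau_1$ traces out a horizontal line, so $y$ is constant on $S$, contradicting our assumption.

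The main obstacle is the first step: carefully assembling the algebraic relation $\psi(\tau_1,\tilde\pi(\tau))=0$ so that Corollary \ref{cor:ContinuationInAlgFunctions} genuinely applies, and handling the branch locus in the analytic continuation. Once this is done, the role of the GUT hypothesis becomes transparent: it is precisely what allows integer translation in $\tau_1$ to preserve $\tilde\pi(\tau)$, so that the single continued relation on $G$ yields infinitely many distinct shifted vanishing conditions on the one-dimensional arc $S$, forcing it to be horizontal.
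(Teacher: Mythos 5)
Your proposal is correct and follows essentially the same route as the paper's proof: parametrising $S$ by $y=\Imm\tau_1$, substituting $\phi(\tilde\pi(\tau))$ for $y$ and $\tau_1-iy$ for $\Re\tau_1$ to obtain an algebraic relation $\psi(\tau_1,\tilde\pi(\tau))=0$, promoting it to $G$ via Corollary \ref{cor:ContinuationInAlgFunctions}, and then using upper-triangularity and the common periodicity of $j,j',\chi,f$ to produce infinitely many integer translates of the vanishing condition, forcing the one-dimensional arc to be horizontal. The paper carries out the same steps in the same order, so this is a faithful reconstruction rather than an alternative argument.
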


\subsection{Ax-Lindemann for Adjacency}
We can now prove our main Ax-Lindemann theorem.  The idea is to show that a real algebraic arc in $\uh^n$ which is `adjacent' to a variety $V$ (in a suitable sense) must be contained in a weakly $\uh$-special variety which is itself adjacent to $V$.
\begin{thrm}\label{propn:AxLindOverQ}
 Let $V\suq\mathbb{C}^{3n}$ be an algebraic variety.  Let $S$ be an arc of a real algebraic curve lying in $(\uh\times \slr)^n$.  Define 
 \[\widehat{S}=\{(g_1\tau_1,\dots,g_n\tau_n):(\tau_1,g_1,\dots,\tau_n,g_n)\in S\},\]
 and suppose that $\widehat{S}$ is positive-dimensional; that is, not all of the $g_j\tau_j$ are constant on $S$.
 
 Further suppose that, for some $c_i\in\mathbb{R}$, 
 \[\left[\dots,j(g_i\tau_i),\dfrac{j'(g_i\tau_i)}{m_{g_i}(\tau_i)},p_{c_i}\left(j(g_i\tau_i),\chi^*(g_i\tau_i),\dfrac{j'(g_i\tau_i)}{m_{g_i}(\tau_i)}\right),\dots\right]\in V\]
 for all $(\tau_1,g_1,\dots,\tau_n,g_n)\in S$.  
 
 Then there exists a weakly $\uh$-special variety $G$ with
 \[\widehat{S}\suq G\hookrightarrow V.\]

 \end{thrm}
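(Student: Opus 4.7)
The plan is to parametrize $S$, unpack the hypothesis into an algebraic-function identity among $j, j', \chi, f$ and a few auxiliary quantities on $\widehat{S}$, propagate this identity to the smallest weakly $\uh$-special variety $G$ containing $\widehat{S}$ using Corollary \ref{cor:ContinuationInAlgFunctions}, eliminate the real-analytic content with Lemma \ref{lma:yIsAlgebraic}, and finally read off adjacency data witnessing $G \hookrightarrow V$. Let $t$ be a single real algebraic parameter for $S$, write $g_i(t) \in \slr$ and $\tau_i(t) \in \uh$ for the corresponding algebraic functions, and set $\sigma_i(t) = g_i(t) \tau_i(t)$ together with $\mu_i(t) = 1/m_{g_i(t)}(\tau_i(t))$, both algebraic in $t$. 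For each defining polynomial $F$ of $V$, expand $p_{c_i}$ through its explicit formula and substitute $\chi^*(\tau) = \chi(\tau) - 3f(\tau)/(\pi \Imm \tau)$ to rewrite $F[\cdots] = 0$ on $S$ as a polynomial relation in the quantities $j(\sigma_i), j'(\sigma_i), \chi(\sigma_i), f(\sigma_i), \Imm\sigma_i$ and the $\mu_i$, with the $c_i$ occurring only as scalar coefficients. Because $\mu_i(t)$ and $\Imm \sigma_i(t)$ are algebraic in $t$ and $\widehat{S}$ is positive-dimensional, eliminating $t$ realises each of them as an algebraic function of $(\sigma_1, \ldots, \sigma_n)$ along $\widehat{S}$, so the relation takes the form $\Phi(\sigma_1, \tilde\pi(\sigma)) = 0$ required by Corollary \ref{cor:ContinuationInAlgFunctions}.

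Assuming first that $G$ is GUT, Corollary \ref{cor:ContinuationInAlgFunctions} extends $\Phi = 0$ from $\widehat{S}$ to all of $G$. Any residual algebraic dependence of $\Imm \sigma_i$ on the holomorphic data is forbidden by Lemma \ref{lma:yIsAlgebraic}, leaving only the $\Imm \sigma_i$-factors originating in the explicit $iZ/c$ term of $p_c$; these are absorbed into the adjacency constants $c_{i,j}$. The matrices defining the basic part of $G$ provide the adjacency matrices $g_{i,j}$, while the translate coordinates of $G$ supply $(w_i, x_i, y_i)$ via $J$. To produce the scalars $z_{i,j}$, one observes that the extended relation is polynomial in the $\mu_i$-factors; since those factors trace out positive-dimensional loci as $t$ varies along $S$ (this is where the positive-dimensionality assumption on $\widehat{S}$ is used), the polynomial identity must persist on the full Zariski closure of the locus they sweep out, which is enough to extract a valid choice of complex constants $z_{i,j}$. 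Assembling these witnesses the adjacency $G \hookrightarrow V$.

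The principal obstacle is the non-GUT case: if $G$ fails to be GUT, then Corollary \ref{cor:ContinuationInAlgFunctions} cannot be invoked directly. The expected resolution exploits the $\slz$-equivariance of the adjacency notion together with the observation that any non-upper-triangular matrices occurring in the defining data of $G$ can be brought into upper-triangular form after an appropriate $\slz$-action, at the cost of possibly enlarging $G$. Ensuring that the algebraic identity and the candidate adjacency data transform correctly under this reduction, and that the Ax-Lindemann toolkit still applies after the enlargement, is the subtlest step in the argument.
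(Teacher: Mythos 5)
Your outline follows the broad shape of the paper's proof — parametrise, rewrite as an algebraic-function identity in $j,j',\chi,f$, propagate via Corollary \ref{cor:ContinuationInAlgFunctions}, and invoke Lemma \ref{lma:yIsAlgebraic} to eliminate $\Imm\sigma$-dependence — but several of the load-bearing steps are either missing or carried out incorrectly.

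First, the reduction to GUT $G$ is not ``the subtlest step'' and does not require enlarging $G$: since $j(g_j\tau_j)$, $\chi^*(g_j\tau_j)$ and $j'(g_j\tau_j)/m_{g_j}(\tau_j)$ are invariant under $g_j\mapsto\gamma g_j$ for $\gamma\in\slz$, one may replace each $g_j$ by an $\slz$-multiple, hence replace $\widehat{S}$ by $\gamma\widehat{S}$ and $G$ by $\gamma G$, and choose $\gamma\in\slz^n$ so that the $\gl$-matrices defining $\gamma G$ are upper triangular. This is stated as a one-line Note in the paper. Treating it as a deep obstruction suggests you have not realised the invariance.

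Second, and more seriously, your ``eliminate $t$'' step conflates two cases that must be treated separately. When $y=\Imm\sigma_1$ is nonconstant on $\widehat{S}$, the paper forms an irreducible $p_s$ with $p_s(s(\mathbf{X}),\mathbf{X})=0$, sets $q_s=p_s(0,\cdot)$, and runs an iterative factorisation on $q_s$. This produces a genuine dichotomy: either every $T$-coefficient of $q_s$ vanishes identically on $\widehat{S}$ (and hence, by Theorem \ref{thrm:AxLindAllFourHolomorphics}, on $G$), in which case one fixes a base point $\mathbf{a}\in\widehat{S}$ and uses a local uniqueness-of-roots argument to show $s(\Imm a_1,\tilde\pi(\sigma))=0$ for all $\sigma\in G$ — yielding \emph{constant} data $y_j,\rho_j$; or one extracts an algebraic function $\phi$ with $y=\phi(\tilde\pi(\sigma))$, to which Lemma \ref{lma:yIsAlgebraic} applies and gives a contradiction. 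Your proposal collapses this dichotomy into a vague ``Lemma \ref{lma:yIsAlgebraic} forbids residual dependence'', and your extraction of the constants $z_{i,j}$ — from ``the $\mu_i$-factors trace out positive-dimensional loci so the polynomial identity persists on the Zariski closure'' — does not actually produce fixed constants, which is what the adjacency definition demands. The constants come from evaluating at a point, not from a closure argument.

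Third, you have the case in which $y_j$ is constant for all $j$. When the $\rho_j$ are also constant you can apply Theorem \ref{thrm:AxLindAllFourHolomorphics} directly, but even then one does not yet have adjacency: one must pass from $\chi(\sigma_j)-3f(\sigma_j)/\pi y_j$ to $\chi^*(\sigma_j)$. The paper does this via a Zariski closure over $\mathbb{C}$, a limiting argument with matrices $\gamma_t\in\slz$ sending $f(\gamma_t\sigma)\to 0$ while $\chi(\gamma_t\sigma)\to\chi(\sigma)$, and an isomorphism theorem from \cite{Spence2016}. This is absent from your outline. Finally, when all $y_j$ are constant but some $\rho_j$ varies, one must analytically continue along a complex algebraic component and find a \emph{new} arc $T$ on which $\Imm\sigma_1$ is nonconstant, then re-enter the earlier case — you have not accounted for this at all.
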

 \textbf{Note.} The functions $j(g_j\tau_j)$, $\chi^*(g_j\tau_j)$ and $j'(g_j\tau_j)/m_{g_j}(\tau_j)$ are unaffected if we replace $g_j$ by $\gamma g_j$ for any $\gamma \in \slz$.  Hence we may assume that $G$, the smallest weakly $\uh$-special variety containing $\widehat{S}$, is a GUT variety.  This will be useful several times throughout.\\
 
  \textbf{Idea of Proof.} First, we attempt to parametrise the relevant algebraic arcs in terms of the imaginary part $y_j$ of one of the variables.  With suitable manipulations, we reach one of two outcomes: either a particular complex analytic relation involving just $j$, $\chi$, $f$ and $j'$ holds, or $y_j$ is equal to an algebraic function of $j$, $\chi$, $f$ and $j'$.  In the first case, the result comes fairly easily.  In the second case, we apply Lemma \ref{lma:yIsAlgebraic} to see that $y_j$ is constant; this situation can be dealt with easily.
 \begin{proof}[Proof of 2.8]
  Given $(\dots,\tau_j,g_j,\dots)\in S$, let us write 
  \[g_j\tau_j=\sigma_j=x_j+iy_j\] and 
  \[m_{g_j}(\tau_j)=\rho_j=u_j+iv_j.\]
  We wish to parametrise the real algebraic arc
  \[\tilde{S}=\{(\dots,g_j\tau_j,m_{g_j}(\tau_j),\dots):(\dots,\tau_j,g_j,\dots)\in S\}\suq (\uh\times\mathbb{C})^n\]
  in terms of one of the $y_j$.  Thus we first have to deal with the possibility that all of the $y_j$ are in fact constant on $\tilde S$.  
  
  In this situation, let us first assume that the $\rho_j$ are also constant.  Then we have
  \begin{equation}\label{eqn:ConstantCase}\left[\dots,j(\sigma_j),\dfrac{j'(\sigma_j)}{\rho_j},p_{c_j}\left(j(\sigma_j),\chi(\sigma_j)-\dfrac{3f(\sigma_j)}{\pi y_j},\dfrac{j'(\sigma_j)}{\rho_j}\right),\dots\right]\in V\end{equation}
  for some constants $y_j$, $\rho_j$ and all $\sigma=(\sigma_1,\dots,\sigma_n)\in\widehat{S}$.  Recall that $G$, the weakly $\uh$-special closure of $\widehat{S}$, can be assumed to be a GUT variety.  So we can apply Theorem \ref{thrm:AxLindAllFourHolomorphics} to see that (\ref{eqn:ConstantCase}) holds for all $\sigma\in G$.  
  
  Taking Zariski closures (over $\mathbb{C}$) in (\ref{eqn:ConstantCase}), we get
  \[\left[\dots,j(\sigma_j),w_j,p_{c_j}\left(j(\sigma_j),\chi(\sigma_j)-\dfrac{3f(\sigma_j)}{\pi y_j},w_j\right),\dots\right]\in V\]
  for all $\mathbf{q}\in G$ and $(w_1,\dots,w_n)\in V_{j(\mathbf{q})}$.  Here $V_{j(\mathbf{q})}$ is a variety depending only on the $j(\sigma_j)$, which contains the point 
  \[\left(\dfrac{j'(\sigma_1)}{\rho_1},\dots,\dfrac{j'(\sigma_n)}{\rho_n}\right).\]  
  As in \cite[Lemma 4.10]{Spence2016}, it is easy to find a sequence of matrices $\gamma_t\in\slz$, $t\in\mathbb{N}$, such that 
  \[(\chi(\gamma_t\sigma_1),\dots,\chi(\gamma_t\sigma_n))\to (\chi(\sigma_1),\dots,\chi(\sigma_n))\]
  and
  \[(f(\gamma_t\sigma_1),\dots,f(\gamma_t\sigma_n))\to 0\]
  as $t\to\infty$.  So by continuity (and the invariance of $j$), we get
  \[\left[\dots,j(\sigma_j),w_j,p_{c_j}\left(j(\sigma_j),\chi(\sigma_j),w_j\right),\dots\right]\in V\]
  for all $\mathbf{q}\in G$ and $\mathbf{w}\in W_{j(\mathbf{q})}$.  By an isomorphism theorem from \cite{Spence2016}, we get
  \[\left[\dots,j(\sigma_j),w_j,p_{c_j}\left(j(\sigma_j),\chi^*(\sigma_j),w_j\right),\dots\right]\in V\]
  and hence
  \[\left[\dots,j(\sigma_j),\dfrac{j'(\sigma_j)}{\rho_j},p_{c_j}\left(j(\sigma_j),\chi^*(\sigma_j),\dfrac{j'(\sigma_j)}{\rho_j}\right),\dots\right]\in V,\]
  for all $\sigma\in G$.  This says precisely that $G\hookrightarrow V$.

  Next we deal with the situation where one of the $y_j$ is nonconstant on $\tilde S$.  Without loss of generality, suppose it is $y_1$ and write $y=y_1$.  We can parametrise
  \[\tilde{S}=\{(x_1(y)+iy,u_1(y)+iv_1(y),\dots,x_n(y)+iy_n(y),u_n(y)+iv_n(y)):y\in I\},\]
  for some interval $I\suq\mathbb{R}$ and algebraic functions $x_i,y_i,u_i,v_i$.  Letting $F$ be a defining polynomial of $V$, we have
  \[F\left[\dots,j(\sigma_j),\dfrac{j'(\sigma_j)}{u_j(y)+iv_j(y)},p_{c_j}\left(j(\sigma_j),\chi(\sigma_j)-\dfrac{3f(\sigma_j)}{\pi y_j(y)},\dfrac{j'(\sigma_j)}{u_j(y)+iv_j(y)}\right),\dots\right]=0,\]
  where we set $y_1(y)=y$.  We can rewrite this; there is an algebraic function $s$ such that the above holds if and only if
  \[s(y,\tilde\pi(\sigma))=0\]
  for all $\sigma=(\sigma_1,\dots,\sigma_n)\in\widehat{S}$.  Here we are writing $y=\Imm\sigma_1$ and, as before:
  \[\tilde\pi(\sigma_1,\dots,\sigma_n)=(j(\sigma_1),j'(\sigma_1),\chi(\sigma_1),f(\sigma_1),\dots,j(\sigma_n),j'(\sigma_n),\chi(\sigma_n),f(\sigma_n)).\]
  
  Since $s$ is an algebraic function, there is a nontrivial irreducible polynomial $p_s$ such that 
  \[p_s(s(\mathbf{X}),\mathbf{X})=0\]
  for all $\mathbf{X}$.  In particular,
  \[p_s(0,y,\tilde\pi(\sigma))=0\]
  for all $\sigma\in\widehat{S}$ and $y=\Imm\sigma_1$.  Since $p_s$ is irreducible and nontrivial, we get a nontrivial $q_s(\mathbf{X})=p_s(0,\mathbf{X})$.  (It is clear that $p_s(t,\mathbf{X})\ne t$.)
  
  Now we apply the following iterative procedure to $q_s$.
  \begin{enumerate}
   \item Inspect separately each coefficient $r_k$ of $T^k$ in $q(T,\dots)$.  If 
    \[r_k(\tilde\pi(\sigma))=0\]
    for all $\sigma\in\widehat{S}$ and all $k$, then terminate.  Otherwise, let $q'$ be the polynomial produced by removing from $q$ all coefficients $r_k$ which have the above property.
   \item If $q'$ is irreducible, terminate.  Otherwise, there is a factor $q''$ of $q'$ with
    \[q''(y,\tilde\pi(\sigma))=0\]
    for all $\sigma\in\widehat{S}$ and $y=\Imm\sigma_1$.  
   \item We have a polynomial $q''$, which retains the property that
   \[q''(y,\tilde\pi(\sigma))=0\]
    for all $\sigma\in\widehat{S}$ and $y=\Imm\sigma_1$.  Repeat from step 1, with $q''$ instead of $q$.   
  \end{enumerate}
  This must eventually terminate, since step 2 will always reduce the degree of the polynomial in question.  So we have two possibilities.\\
  
  If we terminated at step 1, then working backwards we see that every coefficient $r_k$ of $T^k$ in $q_s$ has the property that
  \[r_k(\tilde\pi(\sigma))=0\]
  for all $\sigma\in\widehat{S}$.  Using the fact that $G$ is a GUT variety, we can apply Theorem \ref{thrm:AxLindAllFourHolomorphics} to see that this holds for all $\sigma\in G$.  In particular, 
  \[q_s(y,\tilde\pi(\sigma))=0\]
  for all $y\in\mathbb{C}$ and all $\sigma\in G$.  Otherwise put, 0 is a root of
  \begin{equation}\label{eqn:pSEquation}p_s(X,y,\tilde\pi(\sigma))\end{equation}
  for all $y\in\mathbb{C}$ and all $\sigma\in G$.
  
  Now we proceed much as we did in the proof of Corollary \ref{cor:ContinuationInAlgFunctions}.  Choose a point 
  \[\mathbf{a}=(a_1,\dots,a_n)\in\widehat{S},\]
  a $G$-open neighbourhood $U$ of $\mathbf{a}$ and a $\mathbb{C}$-open neighbourhood $W$ of 0 with the following property.  Whenever $\sigma=(\sigma_1,\dots,\sigma_n)\in U$, the only root of (\ref{eqn:pSEquation}) lying in $W$ is 0 itself.
  
  Now recall that 
  \[s(y,\tilde\pi(\sigma))\] 
  is a root of (\ref{eqn:pSEquation}) for all $y$ and all $\sigma$.  This is just the definition of $p_s$.  Since $s$ vanishes at $\mathbf{a}\in\widehat{S}$, there is a $G$-open neighbourhood 
  \[\mathbf{a}\in U'\suq U\]
  such that
  \[s(\Imm a_1,\tilde\pi(\sigma))\in W\]
  for all $\sigma\in U'$.  But the only root of (\ref{eqn:pSEquation}) lying in $W$ is the root 0.  So it must be the case that
  \[s(\Imm a_1,\tilde\pi(\sigma))=0\]
  for all $\sigma\in U'$ and hence for all $\sigma\in G$.  
  
  Recalling the definition of $S$, we see that
  \[F\left[\dots,j(\sigma_j),\dfrac{j'(\sigma_j)}{u_j(y)+iv_j(y)},p_{c_j}\left(j(\sigma_j),\chi(\sigma_j)-\dfrac{3f(\sigma_j)}{\pi y_j(y)},\dfrac{j'(\sigma_j)}{u_j(y)+iv_j(y)}\right),\dots\right]=0,\]
  for all $\sigma=(\sigma_1,\dots,\sigma_n)\in G$ and for \emph{constants} $y_j=y_j(\Imm a_1)$ and $\rho_j=u_j(\Imm a_1)+iv_j(\Imm a_1)$.  
  
  If we repeat this whole procedure for each defining polynomial of $V$, we get
  \[\left[\dots,j(\sigma_j),\dfrac{j'(\sigma_j)}{\rho_j},p_{c_j}\left(j(\sigma_j),\chi(\sigma_j)-\dfrac{3f(\sigma_j)}{\pi y_j},\dfrac{j'(\sigma_j)}{\rho_j}\right),\dots\right]\in V\]
  for all $\sigma\in G$ and \emph{constants} $y_j=y_j(\Imm a_1)$ and $\rho_j=u_j(\Imm a_1)+iv_j(\Imm a_1)$.  Now we are in exactly the same position as we were for (\ref{eqn:ConstantCase}), so we conclude as we did earlier.\\
  
  If we terminated at step 2, then we have an irreducible polynomial $q$ such that
  \[q(y,\tilde\pi(\sigma))=0\]
  for all $\sigma=(\sigma_1,\dots,\sigma_n)\in \widehat{S}$.  Moreover, for every $k$, the coefficient $r_k$ of $T^k$ in $q(T,\dots)$ has the property that
  \[r_k(\tilde\pi(\sigma))\]
  does \emph{not} vanish identically for $\sigma\in\widehat{S}$.  Thus we can extract an algebraic function $\phi$ such that
  \[y=\phi(\tilde\pi(\sigma))\]
  for all $\sigma\in \widehat{S}$ and $y=\Imm\sigma_1$.  Now we may apply Lemma \ref{lma:yIsAlgebraic} (once again using the fact that the weakly $\uh$-special closure of $\widehat{S}$ is a GUT variety) and see that $y$ is constant on $\widehat{S}$, a contradiction.\\
  
  Finally we deal with the case where the $y_j$ are all constant on $\tilde S$, but perhaps the $\rho_j$ vary.  Say $y_j=a_j$.  Note, by hypothesis, that at least one of the $\sigma_j$ is nonconstant on $\tilde S$.  Without loss of generality, let us say it is $\sigma_1$.
  
  We have the relation
  \begin{equation}\label{eqn:constantYs}
   \left[\dots,j(\sigma_j),\dfrac{j'(\sigma_j)}{\rho_j},p_{c_j}\left(j(\sigma_j),\chi(\sigma_j)-\dfrac{3f(\sigma_j)}{\pi a_j},\dfrac{j'(\sigma_j)}{\rho_j}\right),\dots\right]\in V,   
  \end{equation}
  holding for all $(\dots,\sigma_j,\rho_j,\dots)\in \tilde S$.  This is a complex analytic relation.  
  
  Consider the intersection of an irreducible algebraic variety $W\suq\mathbb{C}^{2n}$ with $(\uh\times\mathbb{C})^n$.  A connected component of this intersection is called a complex algebraic  component.  Let $A$ be the smallest complex algebraic component containing $\tilde S$.  Then by analytic continuation, (\ref{eqn:constantYs}) holds for all $(\dots,\sigma_j,\rho_j,\dots)\in A$.
  
  Since the weakly $\uh$-special closure of $\widehat{S}$ is a GUT variety $G$, the projection of $A$ onto the $\sigma_j$ coordinates also has $G$ as its weakly $\uh$-special closure.  Hence we can find a real algebraic arc
  \[T\suq (\uh\times\mathbb{C})^n\]
  with the following properties:
  \begin{itemize}
   \item On $T$, the imaginary part of $\sigma_1$ is nonconstant.
   \item Whenever $(\dots,\sigma_j,r_j,\dots)\in T$, we have
   \[
    \left[\dots,j(\sigma_j),\dfrac{j'(\sigma_j)}{\rho_j},p_{c_j}\left(j(\sigma_j),\chi(\sigma_j)-\dfrac{3f(\sigma_j)}{\pi a_j},\dfrac{j'(\sigma_j)}{\rho_j}\right),\dots\right]\in V.   
   \]
   \item The projection of $T$ onto the $\sigma_j$ coordinates has $G$ as its weakly $\uh$-special closure.
  \end{itemize}
  We can then parametrise $T$ in terms of $y=\Imm \sigma_1$.  Exactly as before, we then rewrite the polynomial relation as an algebraic function $s$, yielding
  \[s(y,\tilde\pi(\sigma_1,\dots,\sigma_n))=0\]
  whenever $(\dots,\sigma_j,r_j,\dots)\in T$ and $y=\Imm \sigma_1$.  By the same analysis as earlier, we end up with a constant $a$ such that
  \[s(a,\tilde\pi(\sigma_1,\dots,\sigma_n))=0\]
  for all $(\sigma_1,\dots,\sigma_n)\in G$.  This in turn yields
  \[\left[\dots,j(\sigma_j),\dfrac{j'(\sigma_j)}{\rho_j(a)},p_{c_j}\left(j(\sigma_j),\chi(\sigma_j)-\dfrac{3f(\sigma_j)}{\pi a_j},\dfrac{j'(\sigma_j)}{\rho_j(a)}\right),\dots\right]\in V\]
  for all $(\sigma_1,\dots,\sigma_n)\in G$.  Since $\rho_j(a)$ and $a_j$ are constants, we can then conclude exactly as we did for (\ref{eqn:ConstantCase}).
 \end{proof}

The above theorem will be used in our point-counting arguments in the next section, in order to take a real algebraic arc and produce from it a weakly $\uh$-special variety with certain adjacency properties.  We will also need the following corollary, which we will eventually use to ensure that there are only finitely many basic linear varieties adjacent to our fixed variety $V$.

\begin{cor}\label{cor:BasicsAreSpecial}
 Let $V\suq\mathbb{C}^{3k}$ be a variety and let $B$ be a basic linear variety adjacent to $V$.  Suppose $B$ is maximal with this property.  Then $B$ is $\uh$-special, ie. all of the $g_{i,j}$ defining $B$ lie in $\gl$.
 \begin{proof}
  Immediate from \ref{propn:AxLindOverQ}.
 \end{proof}

\end{cor}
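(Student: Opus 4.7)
The plan is to apply Theorem \ref{propn:AxLindOverQ} to a generic one-dimensional real algebraic arc lying in $B$, produce from it a weakly $\uh$-special variety $G\supseteq B$ that is itself adjacent to $V$, and then use the maximality hypothesis to force $G=B$; this pins the matrices defining $B$ to lie in $\gl$, i.e., makes $B$ into a $\uh$-special variety.

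The only real subtlety is that the notion of adjacency carries the scaling factors $z_{i,j}$, whereas Theorem \ref{propn:AxLindOverQ} is stated only with the unscaled tuple. To bridge this gap I would fix data $(z^B_{i,j},c^B_{i,j})$ witnessing $B\hookrightarrow V$ and replace $V$ by an auxiliary algebraic variety $V^\dagger\suq\mathbb{C}^{3k}$, defined as the pullback of $V$ through the block-wise polynomial map $\Phi:\mathbb{C}^{3k}\to\mathbb{C}^{3k}$ whose action on the $(i,j)$-triple is $(a,b,d)\mapsto(a,z^B_{i,j}b,(z^B_{i,j})^2d+iz^B_{i,j}b(1-z^B_{i,j})/c^B_{i,j})$. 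A short direct calculation with $p_c$, using the identity $p_c(W,X,zZ)=z^2 p_c(W,X,Z)+iZz(1-z)/c$, shows that $\Phi$ converts an unscaled tuple into the corresponding $z^B$-scaled tuple (for the same $c$). Consequently the adjacency of $B$ to $V$ translates exactly into the assertion that the unscaled tuple of Theorem \ref{propn:AxLindOverQ} lies in $V^\dagger$ for every $\tau_i\in\uh$. Choosing a generic real algebraic arc $S\suq(\uh\times\slr)^k$ by varying the parameters of $B$ along a one-dimensional path while keeping the matrices $g_{i,j}$ fixed, Theorem \ref{propn:AxLindOverQ} then produces a weakly $\uh$-special $G$ with $\widehat S\suq G\hookrightarrow V^\dagger$. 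Since $B$ is basic and the arc is generic, $G$ is the smallest weakly $\uh$-special variety containing $B$: it is itself basic and its defining matrices lie in $\gl$.

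The final step is to recast $G\hookrightarrow V^\dagger$ as $G\hookrightarrow V$ and then apply maximality. If $(z^G_{i,j},c^G_{i,j})$ witness $G\hookrightarrow V^\dagger$, composing block-wise with $\Phi$ and again invoking the same $p_c$-identity yields adjacency of $G$ to $V$ with scaling data $z'_{i,j}=z^B_{i,j}z^G_{i,j}$ and $c'_{i,j}$ determined by $(z^B_{i,j},c^B_{i,j},c^G_{i,j})$. Now if $B$ failed to be $\uh$-special, some $g_{i,j}$ would not be proportional to an element of $\gl$, and the smallest weakly $\uh$-special variety containing $B$ would have to drop the corresponding constraint, producing $G\supsetneq B$. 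This would exhibit a strictly larger basic linear variety adjacent to $V$, contradicting the maximality of $B$. Hence every $g_{i,j}$ is proportional to an element of $\gl$, which is precisely the claim. The main obstacle is the bookkeeping around the scaling factors $z_{i,j}$: everything else is a direct appeal to Theorem \ref{propn:AxLindOverQ} and to the genericity of the chosen arc, but one must check carefully that the unscaled adjacency for $V^\dagger$ returned by the theorem really transfers to valid adjacency data for $G$ with respect to $V$, which is exactly what the polynomial identity for $p_c$ delivers.
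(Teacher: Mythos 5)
Your approach is genuinely different from what the paper intends and, as written, has a gap in the last step. The paper's ``Immediate'' relies on the observation that the scaling factors $z_{i,j}$ can be absorbed directly into the arc $S \suq (\uh\times\slr)^k$ fed into Theorem~\ref{propn:AxLindOverQ}, rather than into an auxiliary variety. Specifically, for a fixed $\sigma\in\uh$ and any nonzero $\rho\in\mathbb{C}$, there exist $(\tau,h)\in\uh\times\slr$ with $h\tau=\sigma$ and $m_h(\tau)=\rho$ (take $\Imm\tau = |\rho|\,\Imm\sigma$; the argument of $\rho$ then fixes $h$ up to sign, and the determinant works out to $1$). So one can choose a real algebraic arc $S$ on which $h_l s_l = g_{(l)}\tau_{\pi(l)}(t)$ and $m_{h_l}(s_l) = m_{g_{(l)}}(\tau_{\pi(l)}(t))/z_{(l)}$. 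With this choice, the unscaled tuple in the hypothesis of Theorem~\ref{propn:AxLindOverQ} coincides exactly with the $z$-scaled tuple from the adjacency of $B$ to $V$, so the theorem applies to $V$ itself and hands you $G \supseteq B$ with $G\hookrightarrow V$; maximality then finishes the argument with no conversion step.

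Your $V^\dagger$-route is a legitimate alternative for the first half (and your $p_c$-identity is correct), but the ``recast $G\hookrightarrow V^\dagger$ as $G\hookrightarrow V$'' step has a genuine gap. Carrying out the composition you describe, the new constant is forced to satisfy
\[
\frac{1}{c''_{i,j}} \;=\; \frac{z^B_{i,j}}{c^G_{i,j}} \;+\; \frac{1-z^B_{i,j}}{c^B_{i,j}},
\]
and since $z^B_{i,j}$ is an arbitrary complex number this need not be a positive real (it can even be infinite or non-real), whereas the definition of adjacency insists on $c_{i,j}>0$. The only way this is salvaged is by observing that the proof of Theorem~\ref{propn:AxLindOverQ} passes the $c_j$ in its hypothesis through unchanged to the adjacency data in its conclusion, giving $c^G_{i,j}=c^B_{i,j}$ and hence $c''_{i,j}=c^B_{i,j}>0$; but that is a fact about the \emph{proof} of the theorem, not its \emph{statement}, and you neither invoke nor justify it. The direct arc-choice avoids this entirely, which is why the paper regards the corollary as immediate.
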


\section{Point-Counting}
In this section, we will discuss the necessary o-minimality and point-counting considerations.  For familiar readers, the results here should fit well with expectations, though they are necessarily rather less neat than their equivalents in more classical settings.  We are assuming some basic familiarity with o-minimality and the Pila-Wilkie theorems; see \cite{Pila2006}, \cite{Dries1998} and \cite{Dries1994}.

The first crucial fact we need is that all of the relevant functions are definable in an appropriate sense.  Namely, the restrictions of $j$, $j'$, $j''$, $\chi^*$, $\chi$ and $f$ to any standard fundamental domain for the action of $\slz$ on $\uh$ are definable in (the o-minimal structure) $\Raxp$.  We will use the ``most standard'' fundamental domain
\[\mathbb{F}=\{\tau\in\uh:-1/2\leq\operatorname{Re}\tau\leq 1/2, |\tau|\geq 1\}.\]
The definability can be seen in various ways; either as a consequence of the theory of elliptic curves and a result of Peterzil and Starchenko on definability of the Weierstrass $\wp$-function \cite{Peterzil2004}, or via $q$-expansions.   Given this fact, the idea is a fairly standard one:

First assume that a variety contains an infinite set of special points.  Then the preimage of that variety (under $J$ for instance), intersected with $\mathbb{F}^n$, is a definable set in $\Raxp$.  By taking Galois conjugates, one can force this definable set to contain many quadratic points of bounded height, in the sense of the Pila-Wilkie theorem.  So it will contain a real algebraic arc, allowing us to apply the Ax-Lindemann results of the previous section.

The missing ingredient so far is the Galois aspect.  Fortunately, much of this is already done for us; the points about which we need Galois information are just the $j$-special and $\chi^*$-special points.  For control over these, we have the following.

\begin{propn}\label{propn:GaloisOrbits}
 Let $\tau\in\uh$ be a quadratic point and consider the algebraic numbers $j(\tau)$ and $\chi^*(\tau)$.  Let $\sigma$ be a Galois conjugation acting on $\mathbb{Q}(j(\tau))\supseteq\mathbb{Q}(\chi^*(\tau))$.  Let $\tau'$ be a quadratic point such that $j(\tau')=\sigma(j(\tau))$.  Then $\chi^*(\tau')=\sigma(\chi^*(\tau))$.
\end{propn}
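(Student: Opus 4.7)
The plan is to invoke classical complex multiplication theory to set up a correspondence between the Galois action on $j(\tau)$ and a combinatorial action on CM points (modulo $\slz$), and then show that this same correspondence intertwines the Galois action on $\chi^*(\tau)$.

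First I would set up the CM framework. Let $K = \mathbb{Q}(\tau)$ and let $\mathcal{O}$ be the imaginary quadratic order associated to $\tau$. By classical CM theory, $K(j(\tau))$ is the ring class field $H_\mathcal{O}$, and $\operatorname{Gal}(H_\mathcal{O}/K) \cong \operatorname{Cl}(\mathcal{O})$ acts on $j$-values of CM points of discriminant $\operatorname{disc}(\mathcal{O})$ via a canonical action $\star$ on such points modulo $\slz$; complex conjugation gives the remaining part of $\operatorname{Gal}(\mathbb{Q}(j(\tau))/\mathbb{Q})$, corresponding to $\tau \mapsto -\bar\tau$. Hence, for any Galois conjugation $\sigma$ on $\mathbb{Q}(j(\tau))$, there is a quadratic point $\tau''$, unique up to $\slz$, with $j(\tau'') = \sigma(j(\tau))$, obtained from $\tau$ by a purely CM-theoretic operation.

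The hard part, and the main obstacle, is to show that this same correspondence $\tau \leftrightarrow \tau''$ intertwines $\chi^*$, i.e., $\chi^*(\tau'') = \sigma(\chi^*(\tau))$. If $\chi^*$ were a classical holomorphic weight-$0$ modular function with rational Fourier coefficients, this would follow at once from Shimura reciprocity. The subtlety is that $\chi^*$ is almost-holomorphic rather than holomorphic, so the standard formulation of Shimura reciprocity does not apply verbatim. I see two routes around this. The first is to use Masser's explicit expression from the appendix of \cite{Masser1975} giving $\chi^*(\tau)$ at a CM point in terms of algebraic invariants of the associated CM elliptic curve, whose Galois theory is classical, and then check equivariance directly. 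The second is to extract the compatibility from the Andr\'e-Oort argument for $\chi^*$ in \cite{Spence2016}, where this sort of Galois-compatibility is implicitly used.

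Once the equivariance is in hand, the conclusion is immediate. Given the $\tau'$ in the statement, $j(\tau') = \sigma(j(\tau)) = j(\tau'')$ forces $\tau' = \gamma \tau''$ for some $\gamma \in \slz$. Since $\chi^*$ is $\slz$-invariant (being a modular function of weight $0$), we get $\chi^*(\tau') = \chi^*(\tau'') = \sigma(\chi^*(\tau))$, as required.
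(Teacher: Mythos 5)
The strategy you outline is the natural one, and your finishing step is sound: $\chi^*$ has weight $0$ (the numerator $E_2^*E_4E_6$ and denominator $E_4^3-E_6^2$ both have weight $12$), so it is $\slz$-invariant, and once one knows $\chi^*(\tau'')=\sigma(\chi^*(\tau))$ for a specific representative $\tau''$ with $j(\tau'')=\sigma(j(\tau))$, the general $\tau'$ is handled by $\slz$-invariance exactly as you say. You also correctly put your finger on where the real content lies: $\chi^*$ is only almost-holomorphic, so the standard Shimura-reciprocity machinery for weight-zero modular functions with rational $q$-expansion does not apply off the shelf.

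The problem is that this key step is precisely what you do not prove. Of your two proposed routes, the second (``extract the compatibility from \cite{Spence2016}'') is circular: the paper gives no argument of its own for Proposition \ref{propn:GaloisOrbits} and simply cites \cite[Proposition 5.2]{Spence2016}, so appealing to \cite{Spence2016} is appealing to the very result you are asked to prove. The first route (Masser's appendix plus a direct equivariance check) is the right idea, but as written it is an assertion, not an argument. What is actually needed, and what is nontrivial, is something like: for each discriminant $D$ there is a relation between $\chi^*(\tau)$ and $j(\tau)$ at quadratic $\tau$ of discriminant $D$ that is defined over $\mathbb{Q}$ (not merely over $\overline{\mathbb{Q}}$), so that applying a Galois automorphism $\sigma$ of $\mathbb{Q}(j(\tau))$ to both sides is legitimate. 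Masser's appendix gives algebraicity of $\chi^*$ at CM points, but the rationality over $\mathbb{Q}$ of the relation, and its compatibility with the action of $\operatorname{Gal}(\mathbb{Q}(j(\tau))/\mathbb{Q})$ including complex conjugation, is exactly the content you would need to supply and do not. You also describe ``algebraic invariants of the associated CM elliptic curve, whose Galois theory is classical,'' but the quantities entering Masser's formula involve quasi-periods, whose Galois behaviour is not part of the standard CM dictionary in the same way that $j$-invariants and torsion points are; this is the part that genuinely requires work. So the plan is correct in outline, but there is a real gap at its centre.
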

For a proof, see \cite[Proposition 5.2]{Spence2016}.  This tells us, essentially, that to keep track of the Galois conjugates of $\chi^*(\tau)$, we need only keep track of the Galois conjugates of $j(\tau)$.  We already have sufficiently good control of the Galois conjugates of $j(\tau)$; it is a consequence of the Siegel bound for class numbers of quadratic fields \cite{Siegel1935} that
\begin{equation}\label{eqn:SiegelBound}[\mathbb{Q}(j(\tau)):\mathbb{Q}]\gg D^{\frac{1}{4}},\end{equation}
where $D$ is the discriminant of $\tau$.  (In fact we can do much better, but this is sufficient for our purposes.)  Hence in particular, there will be $\gg D^{\frac{1}{4}}$ Galois conjugates of a point $(j(\tau),\chi^*(\tau))$, over any fixed number field.  This fact is central to our main ``point-counting theorem''.  In this theorem we use the assumption of $j'$-genericity for the first, and only, time in the paper.

\begin{thrm}\label{thrm:MainPointCountingThrm}
 Let $V\suq\mathbb{C}^{3n}$ be an algebraic variety defined over $\overline{\mathbb{Q}}$.  Then there is a number $D=D(V)$ with the following property.  Let $\tau\in J^{-1}(V)$ be a $j'$-generic quadratic point with discriminant greater than $D$, and suppose none of the coordinates of $\tau$ lies in $\slz\cdot\{i,\rho\}$.  Then there is an $\uh$-special variety $G$ with
 \[\tau\in G \hookrightarrow V.\]
 \begin{proof}
  Let $K$ be a number field containing a field of definition for $V$.  
  
  Suppose we have a partition of $\{1,\dots, n\}$,
  \[S_1\cup\dots\cup S_k,\]
  with each $S_i\ne\emptyset$.  For each $i$, let $s_i=\min S_i$ and $r_i=\#S_i-1$.  Given 
  \[\sigma=(\sigma_1,\dots,\sigma_k)\in\uh^k\]
  and 
  \[g=(g_{1,1},\dots,g_{1,r_1},\dots,g_{k,1},\dots,g_{k,r_k})\in\slr^{n-k},\]
  define the following set:
  \begin{multline*}
  Z_{\sigma,g}=\Bigg\{(\tau,h)\in\uh^k\times\glr^{n-k}: \det h_{i,j}=\det g_{i,j},\\
   \bigg[\dots, j(\tau_i),j'(\tau_i),p_{\Imm\sigma_i}(j(\tau_i),\chi^*(\tau_i),j'(\tau_i)),\dots\\
   \dots, j(h_{i,j}\tau_i),j'(h_{i,j}\tau_i)\dfrac{m_{g_{i,j}}(\sigma_i)}{m_{h_{i,j}}(\tau_i)},p_{\Imm g_{i,j}\sigma_i}\left(j(h_{i,j}\tau_i),\chi^*(h_{i,j}\tau_i),j'(h_{i,j}\tau_i)\dfrac{m_{g_{i,j}}(\sigma_i)}{m_{h_{i,j}}(\tau_i)}\right),\dots\bigg]\in V\Bigg\}   
  \end{multline*}
  Consider this as a family of sets, fibred over $\uh^k\times\slr^{n-k}$.  There is one such family for each of the finitely many partitions of $\{1,\dots, n\}$, and we consider them all together.  They are certainly \emph{not} definable families.  However, for a given partition, the family
  \[\mathcal{Z}_{\sigma,g}=\{(\tau,h)\in Z_{\sigma,g}:\tau_i, h_{i,j}\tau_i\in \mathbb{F}, i\leq k, j\leq r_k\}\]
  is definable in $\Raxp$.\\
  
  Now let us consider a $j'$-generic special point $\tau\in J^{-1}(V)$, of large discriminant $D(\tau)$.  Up to permutation of coordinates, $\tau$ looks like
  \[\tau=(\sigma_1,g_{1,1}\sigma_1,\dots,g_{1,r_1}\sigma_1,\dots,\sigma_k,g_{k,1}\sigma_k,\dots,g_{k,r_k}\sigma_k),\]
  with the $\sigma_i$ lying in distinct $\gl$-orbits, and $g_{i,j}\in\gl$.  So $\tau$ corresponds to a partition of $\{1,\dots, n\}$ in the obvious way.  Writing $g_{i,j}$ as primitive integer matrices, let $N_{i,j}=\det g_{i,j}$.  Recall that the $j'$-genericity of $\tau$ means that
  \[j'(\sigma_1),\dots,j'(\sigma_k)\]
  are algebraically independent over $\overline{\mathbb{Q}}$.  So we see that the $\overline{\mathbb{Q}}$-Zariski closure $\JClose{\tau}$ of $J(\tau)$ is the set of points of the form
  \begin{multline*}\Biggl[\dots,j(\sigma_i),w_i,p_{\Imm\sigma_i}(j(\sigma_i),\chi^*(\sigma_i),w_i),\dots,j(g_{i,j}\sigma_i),-w_i\lambda_{N_{i,j}}\bigl(j(\sigma_i),j(g_{i,j}\sigma_i)\bigr)m_{g_{i,j}}(\sigma_i),\\p_{\Imm g_{i,j}\sigma_i}\biggl(j(g_{i,j}\sigma_i),\chi^*(g_{i,j}\sigma_i),-w_i\lambda_{N_{i,j}}\bigl(j(\sigma_i),j(g_{i,j}\sigma_i)\bigr)m_{g_{i,j}}(\sigma_i)\biggr),\dots\Biggr],\end{multline*}
  for some $w_1,\dots,w_k\in\mathbb{C}$.
  
  We will show that the existence of this $\tau$ implies that $\mathcal{Z}_{\sigma,g}$ contains $\gg D(\tau)^{\frac{1}{4}}$ quadratic points of bounded height.  As is typical in the Pila-Zannier strategy, these new points will arise from Galois conjugates of $j(\tau)$.  To begin, we need to define a variety which keeps track of $\tau$ and its Galois conjugates.  Said variety will be a subvariety of $\mathbb{C}^{2n}$; we will write a general element of $\mathbb{C}^{2n}$ as 
  \[(\dots,X_i,Y_i,\dots,X_{i,j},Y_{i,j},\dots),\]
  with $i\leq k$ and $j\leq r_i$, matching the structure of the underlying partition of $\{1,\dots,n\}$.
  
  Let
  \begin{multline*}V_{\sigma,g}=\Biggl\{(\mathbf{X},\mathbf{Y})\in\mathbb{C}^{2n}:\forall w_1,\dots,w_k\in\mathbb{C},\\ \Bigl[\dots,X_i,w_i,p_{\Imm\sigma_i}(X_i,Y_i,w_i),\dots,X_{i,j},-w_i\lambda_{N_{i,j}}(X_i,X_{i,j})m_{g_{i,j}}(\sigma_i),\\p_{\Imm g_{i,j}\sigma_i}\bigl(X_{i,j},Y_{i,j},-w_i\lambda_{N_{i,j}}(X_i,X_{i,j})m_{g_{i,j}}(\sigma_i)\bigr),\dots\Bigr]\in V\Biggr\}.\end{multline*}
  Then $V_{\sigma,g}$ is a subvariety of $\mathbb{C}^{2n}$, defined over $K(\sigma,\Imm\sigma)$.  This definition is set up to mirror the shape of $\JClose{\tau}$.  Thus, since $\JClose{\tau}\suq V$, we see that $V_{\sigma,g}$ must contain the point
  \[(j,\chi^*)(\tau).\]
  Hence $V_{\sigma,g}$ also contains every Galois conjugate (over $K(\sigma,\Imm\sigma)$) of $(j,\chi^*)(\tau)$.  By Proposition \ref{propn:GaloisOrbits}, such a Galois conjugate must take the form $(j,\chi^*)(\tau')$, for some quadratc $\tau'$ with $D(\tau')=D(\tau)$.  Moreover, by the existence of the modular polynomial $\Phi_N$, $\tau'$ must have the same $\gl$ structure as $\tau$.  That is:
  \[\tau'=(\sigma_1',g_{1,1}'\sigma_1',\dots,g_{1,r_1}'\sigma_1',\dots,\sigma_k',g_{k,1}'\sigma_k',\dots,g_{k,r_k}'\sigma_k'),\]
  where the $\sigma_i'$ are quadratic points and $g_{i,j}'= g_{i,j}\gamma_{i,j}$, for some $\gamma_{i,j}\in\slz$.  Further, by the modularity of $j$ and $\chi^*$, we can ensure that $\sigma_i'\in\mathbb{F}$.
  
  
  For each $\tau'$ arising this way, let us take $w_i=j'(\sigma_i')$ in the definition of $V_{\sigma,g}$.  Noting that
  \[-j'(\sigma_i')\lambda_{N_{i,j}}(j(\sigma_i'),j(g_{i,j}'\tau))m_{g_{i,j}}(\sigma_i)=j'(g_{i,j}'\sigma_i')\dfrac{m_{g_{i,j}}(\sigma_i)}{m_{g_{i,j}'}(\sigma_i')},\]
  we see that $\left(\sigma',g'\right)\in Z_{\sigma,g}$.  Further, there is $\gamma_{i,j}'\in\slz$ such that $\gamma_{i,j}'g_{i,j}'\sigma_i'\in\mathbb{F}$.  This yields $(\sigma',\gamma'g')\in\mathcal{Z}_{\sigma,g}$.
    
  By (\ref{eqn:SiegelBound}), there are $\gg D(\tau)^{1/4}$ Galois conjugates of $(j,\chi^*)(\tau)$ over $\mathbb{Q}$.  Since $K(\sigma,\Imm\sigma)$ is an extension of $K$ of degree at most $4n$, we have
  \[[\mathbb{Q}(j(\tau)):K(\sigma,\Imm\sigma)]=[\mathbb{Q}(j(\tau)):\mathbb{Q}]/c,\]
  where $c$ is an absolute constant.  Hence there are $\gg D(\tau)^{\frac{1}{4}}$ points $(\sigma',\gamma'g')$ lying in $\mathcal{Z}_{\sigma,g}$.
  
  Moreover, it is a consequence of Proposition 5.2 in \cite{Pila2011} that the corresponding $\gamma_{i,j},\gamma_{i,j}'$ can be chosen to have height polynomial in $D$, whence $(\sigma',\gamma' g')$ has height polynomial in $D$.  So the existence of $\tau\in J^{-1}(V)$, with discriminant $D(\tau)$, ensures that $\mathcal{Z}_{\sigma,g}$ contains $\gg D(\tau)^{1/4}$ points of bounded height (and degree at most 2).
  
  At this point, we can apply the uniform Pila-Wilkie Theorem.  Playing the upper bound from uniform Pila-Wilkie against the lower bound found above, we find a number $D$ such that whenever $\tau\in J^{-1}(V)$ has discriminant greater than $D$, the corresponding $\mathcal{Z}_{\sigma,g}$ contains an arc $T$ of a real algebraic curve.  Further, we can ensure that $T$ contains the $(\sigma',\gamma'g')$ corresponding to one of the $\tau'$ arising from the Galois conjugates of $(j,\chi^*)(\tau)$.
  
  We would like to apply Theorem \ref{propn:AxLindOverQ} to some algebraic arc constructed from $T$.  Indeed, let 
  \[S=\left\{\left(\dots,\tau_i,\begin{pmatrix}1&0\\0&1\end{pmatrix},\dots,\tau_i,\overline{h_{i,j}},\dots\right):(\tau,h)\in T\right\},\]
  where $\overline{h}$ is the element of $\slr$ corresponding to the image of $h$ as an element of $\operatorname{PGL}_2(\mathbb{R})$.  Also let
  \[\widehat{S}=\{(\dots,\tau_i,\dots,h_{i,j}\tau_i,\dots):(\tau,h)\in T\}.\]
  Before we can apply Theorem \ref{propn:AxLindOverQ}, it only remains to check that $\widehat{S}$ is indeed an arc, rather than just point.  This is easy to see; if $\tau_i$ and $h_{i,j}\tau_i$ are all constant on $\widehat{S}$, then $h$ must be constant, up to determinant, on $T$.  Since the determinant of $h_{i,j}$ is fixed in the definition of $Z_{\tau_0,g_0}$, it follows that $\tau_i$ and $h_{i,j}$ are both constant on $T$, whence $T$ itself is just a point, which is a contradiction.
  
  So \ref{propn:AxLindOverQ} yields an $\uh$-special set $H$ with
  \[\widehat{S}\suq G \hookrightarrow V.\]
  Since $(\sigma',\gamma'g')\in T$, we have $\gamma\tau'\in\widehat{S}$ for some $\gamma\in\slz^n$.  Hence, some $\slz$-translate $H'$ of $H$ contains $\tau'$, and remains adjacent to $V$.  Suppose that
  \[H'=B\times \{\tau_{k+1}',\dots,\tau_n'\}\]
  for some basic $\uh$-special variety $B$.  Since $(j,\chi^*)(\tau')$ was a Galois conjugate of $(j,\chi^*)(\tau)$, we can now apply the inverse Galois conjugation to see that
  \[B\times \{\tau_{k+1},\dots,\tau_n\}\hookrightarrow V.\]
  For suitable $\gamma$, the $\uh$-special variety
  \[G=\gamma B\times \{\tau_{k+1},\dots,\tau_n\}\]
  will contain $\tau$ and is still adjacent to $V$.  
 \end{proof}

\end{thrm}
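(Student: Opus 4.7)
The plan is to execute a Pila--Zannier strategy adapted to the adjacency setup, where the central role is played by Theorem~\ref{propn:AxLindOverQ} and Proposition~\ref{propn:GaloisOrbits}, with Siegel's class-number bound (\ref{eqn:SiegelBound}) providing the lower bound on Galois orbits.

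\textbf{Setup.} Let $K$ be a number field of definition for $V$. Given $\tau \in J^{-1}(V)$ a $j'$-generic quadratic point of discriminant $D(\tau)$, up to permutation of coordinates I would write $\tau = (\sigma_1, g_{1,1}\sigma_1, \dots, \sigma_k, \dots, g_{k,r_k}\sigma_k)$, with $\sigma_i$ in distinct $\gl$-orbits and $g_{i,j} \in \gl$ primitive integer matrices of determinant $N_{i,j}$. This partition of $\{1,\dots,n\}$ is one of finitely many possibilities. For each such partition and each choice of base points $\sigma \in \uh^k$ and matrices $g \in \slr^{n-k}$, I would introduce a set $Z_{\sigma,g} \subseteq \uh^k \times \glr^{n-k}$ whose fibre condition enforces that the corresponding $J$-tuple, built using $\sigma$ and $g$ to fix the imaginary parts and $m$-factors, lies in $V$. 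Restricting to $\tau_i, h_{i,j}\tau_i \in \mathbb{F}$ yields a subfamily $\mathcal{Z}_{\sigma,g}$ definable in $\Raxp$ (using Peterzil--Starchenko definability of $j, j', \chi^*$ on $\mathbb{F}$).

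\textbf{Galois conjugates and point-counting.} The $j'$-genericity of $\tau$ implies that $\JClose{\tau}$ is precisely the locus parametrised by free variables $w_i$ representing $j'(\sigma_i)$, with every other derivative expressible in terms of $w_i$ via $\lambda_{N_{i,j}}$, $m_{g_{i,j}}$ and $p_{\Imm g_{i,j}\sigma_i}$. I would package this into a variety $V_{\sigma,g} \subseteq \mathbb{C}^{2n}$ defined over $K(\sigma, \Imm\sigma)$, chosen so that $(\mathbf{X}, \mathbf{Y}) \in V_{\sigma,g}$ iff plugging $\mathbf{X}$ in for the $j$-coordinates and $\mathbf{Y}$ for the $\chi^*$-coordinates yields a subvariety of $V$ after quantifying over the $w_i$. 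Then $\JClose{\tau} \subseteq V$ forces $(j,\chi^*)(\tau) \in V_{\sigma,g}$, and hence every Galois conjugate over $K(\sigma,\Imm\sigma)$ of $(j,\chi^*)(\tau)$ lies in $V_{\sigma,g}$. By Proposition~\ref{propn:GaloisOrbits}, each such conjugate equals $(j,\chi^*)(\tau')$ for some quadratic $\tau'$; the modular polynomials $\Phi_N$ force $\tau'$ to have the same partitioned $\gl$-structure as $\tau$ up to $\slz$-factors, and the same discriminant. After $\slz$-translating each $\sigma_i'$ and each $g'_{i,j}\sigma_i'$ into $\mathbb{F}$, by Pila's estimate \cite[Prop.~5.2]{Pila2011} the resulting point in $\mathcal{Z}_{\sigma,g}$ has height polynomial in $D(\tau)$. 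Siegel's bound~(\ref{eqn:SiegelBound}) then provides $\gg D(\tau)^{1/4}$ such rational points of degree at most $2$.

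\textbf{Pila--Wilkie and Ax--Lindemann.} Applying the uniform Pila--Wilkie theorem to the definable family $\mathcal{Z}_{\sigma,g}$, and comparing against the polynomial height bound, produces a threshold $D = D(V)$ beyond which any such $\tau$ forces $\mathcal{Z}_{\sigma,g}$ to contain a real algebraic arc $T$ through one of the Galois-conjugate points $(\sigma', \gamma' g')$. From $T$ I would read off the arc $\widehat{S} = \{(\dots, \tau_i, \dots, h_{i,j}\tau_i, \dots) : (\tau, h) \in T\} \subseteq \uh^n$, together with its lift $S \subseteq (\uh \times \slr)^n$ using the $\pslr$-image of each $h_{i,j}$. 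A short argument shows $\widehat{S}$ is positive-dimensional: if all $\tau_i$ and $h_{i,j}\tau_i$ were constant on $T$ then, since $\det h_{i,j}$ is fixed on $\mathcal{Z}_{\sigma,g}$, $T$ itself would collapse to a point. Thus Theorem~\ref{propn:AxLindOverQ} applies and produces a weakly $\uh$-special $H \hookrightarrow V$ with $\widehat{S} \subseteq H$.

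\textbf{Conclusion and transport back.} Since $\widehat{S}$ contains $(\gamma\tau')$ for some $\gamma \in \slz^n$, an appropriate $\slz$-translate $H'$ of $H$ contains $\tau'$ and remains adjacent to $V$ (adjacency being $\slz^n$-invariant). Writing $H' = B \times \{\tau'_{k+1}, \dots, \tau'_n\}$ with $B$ basic and $\uh$-special (using Corollary~\ref{cor:BasicsAreSpecial} together with the fact that $\tau'$ is quadratic so the constant coordinates are imaginary quadratic), I would apply the inverse Galois conjugation to $B \times \{\tau'_{k+1},\dots,\tau'_n\}$. Since adjacency is preserved by the Galois action on $(j,\chi^*)$-data and by $\slz$-translations of the defining $g_{i,j}$, this produces an $\uh$-special $G$ containing $\tau$ with $G \hookrightarrow V$. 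The principal obstacle I anticipate is verifying that the Galois and $\slz$-invariance of adjacency is clean enough to survive the transport step --- in particular, keeping track of how the constants $c_{i,j}$ and $(w_i, x_i, y_i)$ in the definition of adjacency transform under these actions, so that the final $G$ is genuinely adjacent to $V$ in the sense required.
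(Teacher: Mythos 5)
Your proposal follows the paper's proof essentially step for step: the same definable families $Z_{\sigma,g}$ and $\mathcal{Z}_{\sigma,g}$, the same use of $j'$-genericity to identify $\JClose{\tau}$ as a $w_i$-parametrised locus, the same $V_{\sigma,g}$ to carry Galois conjugates over $K(\sigma,\Imm\sigma)$, the same Siegel/Pila-Wilkie interplay, and the same application of Theorem~\ref{propn:AxLindOverQ} followed by transport back under $\slz$-action and inverse Galois conjugation. The only cosmetic difference is that you invoke Corollary~\ref{cor:BasicsAreSpecial} to upgrade $H'$ to $\uh$-special, whereas the paper leaves this step implicit (it in fact suffices to observe, as you also note, that $\tau'$ quadratic forces the constant coordinates to be imaginary quadratic, and the matrices are already in $\gl$ because the output of Theorem~\ref{propn:AxLindOverQ} is weakly $\uh$-special).
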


The above result is one of two crucial pieces of ``counting'' we need in order to prove \ref{thrm:PartialAOwDerivs}.  The other half is the following proposition.  The idea is that \ref{thrm:MainPointCountingThrm} will be used to count the number of isolated points that can arise in $J^{-1}(V)$ (the``zero-dimensional pieces''), and this next proposition will count the ``positive-dimensional pieces;'' namely the basic $\uh$-special varieties.  Together, \ref{thrm:MainPointCountingThrm} and \ref{propn:BasicSpecialCount} will act as the engine driving the inductive argument at the heart of the proof of \ref{thrm:PartialAOwDerivs}.
\begin{propn}\label{propn:BasicSpecialCount}
 Let $V\suq\mathbb{C}^{3n}$ be a variety.  Consider the definable subset consisting of those \emph{proper} basic linear varieties $B\suq \uh^k$ (for any $k$) such that:
 \begin{itemize}
  \item $B$ meets $\mathbb{F}^k$ in its full dimension.
  \item $B$ is adjacent to $V$.
  \item $B$ is maximal with the above properties.
 \end{itemize}
 Then there are only finitely many such $B$, and each is $\uh$-special.
 \begin{proof}
  By Corollary \ref{cor:BasicsAreSpecial}, the only basic linear $B$ which are maximally adjacent to $W$ are necessarily $\uh$-special.  For such $B$, all the defining $g_{i,j}$ are in $\gl$, so the collection of such $B$ is parametrised by a countable set.  
  The conditions specified are definable, (compare with, for instance, \cite[Proposition 10.2]{Pila2011}) so we have a countable definable set, which is therefore finite.
 \end{proof}

\end{propn}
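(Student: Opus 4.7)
The plan is to decompose the proposition into its two claims and dispatch each by combining previously established ingredients. The ``each is $\uh$-special'' half is essentially immediate from Corollary \ref{cor:BasicsAreSpecial}: a basic linear $B$ which is maximal among basic linear varieties adjacent to $V$ falls under the hypothesis of that corollary, so all the defining matrices $g_{i,j}$ of $B$ lie in $\gl$. This simultaneously tells us that the set of candidate $B$ is parametrised by finite tuples of matrices in $\gl$, hence by a countable set.

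For the finiteness claim, the strategy is the familiar one: a definable subset of a countable set is finite. Concretely, I would view basic linear varieties $B\suq\uh^k$ as parametrised by their defining tuples $(g_{i,j})\in\slr^{n-k}$ (with the partition structure fixed, and then taking a finite disjoint union over partitions and dimensions $k\leq n$). Each of the three bullet conditions is a definable condition on this parameter space in $\Raxp$: meeting $\mathbb{F}^k$ in full dimension is an elementary condition on the $g_{i,j}$; adjacency unfolds as an existential quantification over the auxiliary data $z_{i,j},c_{i,j},w_i,x_i,y_i$ of the condition that a certain tuple built from $j,j',\chi^*,p_c$ evaluated at points of $\mathbb{F}$ lies in $V$; and maximality among basics satisfying the first two conditions is first-order over the already-definable family. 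Thus the set of parameters of valid $B$ is a definable subset of $\slr^{n-k}$, contained by the first paragraph in the countable set $\gl^{n-k}$, and so must be finite.

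The only potentially delicate point is verifying that the adjacency condition is indeed definable. The functions $j,j',\chi^*,\chi,f$ are definable in $\Raxp$ only when restricted to a fundamental domain such as $\mathbb{F}$, so one needs to express adjacency so that its evaluation only uses inputs lying in $\mathbb{F}$. The invariance remark following Theorem \ref{propn:AxLindOverQ} (that the relevant quantities are unchanged if each $g_{i,j}$ is left-multiplied by an $\slz$ element) lets us replace each $g_{i,j}\tau_i$ by its $\slz$-translate into $\mathbb{F}$ without changing the condition; combined with the hypothesis that $B$ meets $\mathbb{F}^k$ in full dimension, this gives a bona-fide definable family. This is entirely parallel to the setup in \cite[Proposition~10.2]{Pila2011}, which I would cite for the routine verification.

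I do not expect any genuine obstacle: the substance of the proposition is entirely absorbed by Corollary \ref{cor:BasicsAreSpecial} (which in turn rests on the Ax-Lindemann result, Theorem \ref{propn:AxLindOverQ}), and the remaining content is the standard definability-plus-countability finiteness principle in o-minimality.
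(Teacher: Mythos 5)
Your proof follows the same route as the paper's: Corollary \ref{cor:BasicsAreSpecial} handles the ``each is $\uh$-special'' claim and gives countability, and then finiteness follows from the countable-plus-definable principle, with the definability of the three conditions being the only point needing verification (which the paper delegates to a comparison with \cite[Proposition 10.2]{Pila2011}). The extra care you take spelling out why adjacency is definable in $\Raxp$ -- reducing to the fundamental domain via the $\slz$-invariance noted after Theorem \ref{propn:AxLindOverQ} -- is exactly what the paper's citation is meant to cover, so this is the same argument, just with the routine details filled in.
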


\section{Bringing the Proof Together}
As we bring everything together to prove our central theorem, let us recall the statement.

\begin{thrmREP}
 Let $V\suq\mathbb{C}^{3n}$ be a proper algebraic variety defined over $\overline{\mathbb{Q}}$.  There exists an $\slz$-finite collection $\sigma(V)$, consisting of \emph{proper} $\uh$-special varieties of $\uh^n$, with the following property.  Every $j'$-generic $\uh$-special point in $J^{-1}(V)$ is contained in some $G\in\sigma(V)$.
 \begin{proof}
  First, let us note: we can safely ignore any $\tau\in J^{-1}(V)$ which have a coordinate lying in $\slz\cdot\{i,\rho\}$, since these all clearly lie in $\slz$-finitely many proper $\uh$-special subvarieties.\\
  
  We work by induction on $n$.  For $n=1$, we argue as follows.  
  
  Suppose that $V\suq \mathbb{C}^3$ is an algebraic variety defined over $\overline{\mathbb{Q}}$ and let $D=D(V)$ be the number given to us by \ref{thrm:MainPointCountingThrm}.  If $J^{-1}(V)$ contains $\slz$-infinitely many quadratic points, then in particular it contains one of discriminant greater than $D$.  Theorem \ref{thrm:MainPointCountingThrm} then tells us that $\uh$ is adjacent to $V$.  This implies that
  \[\forall\tau\in\uh, (j(\tau),j'(\tau)z,p_c(j(\tau),\chi^*(\tau),j'(\tau)z))\in V\]
  for some $c>0$ and $z\in\mathbb{C}$.  Taking Zariski closures (over $\mathbb{C}$) this says that
  \[\forall w\in\mathbb{C},\tau\in\uh, (j(\tau),w,p_c(j(\tau),\chi^*(\tau),w))\in V.\]
  In particular, for any $\tau$ with $\Imm\tau=c$, we have
  \[(j(\tau),j'(\tau),p_c(j(\tau),\chi^*(\tau),j'(\tau)))\in V.\]
  For such $\tau$, $p_c(j(\tau),\chi^*(\tau),j'(\tau))=j''(\tau)$ by definition.  Hence
  \[J(\tau)\in V\]
  for all $\tau$ with $\Imm\tau=c$.  By analytic continuation, this says that $J(\uh)\suq V$, whence $V=\mathbb{C}^3$.\\
  
  So by induction we may assume the result holds for all $V\suq\mathbb{C}^{3k}$, $k<n$.

The first stage is to construct a variety $V^*$ which is designed to account for all possible positive-dimensional special subvarieties of $V$.  

Let $\mathcal{G}$ be the finite collection of proper basic $\uh$-special subvarieties (of some $\uh^k$) afforded by applying Proposition \ref{propn:BasicSpecialCount} to $V$.   Then let
\[\mathcal{G}_1^*=\Bigl\{\omega\bigl(\gamma\cdot(B\times\uh^{n-k})\bigr):B\in \mathcal{G}, B\suq\uh^k, \gamma\in\slz^n,\omega\text{ a permutation of the coordinates}\Bigr\}.\]
Since $\mathcal{G}$ was finite, $\mathcal{G}_1^*$ is $\slz$-finite.

Next, consider the variety $V_k\suq\mathbb{C}^{3(n-k)}$, defined over $\overline{\mathbb{Q}}$ by 
\begin{multline*}V_k=\{\mathbf{X}\in\mathbb{C}^{3(n-k)}:\text{The translate of }\mathbb{C}^{3k}\text{ by }\mathbf{X}\\\text{ (for some choice of ordering of coordinates) is contained in }V\}.\end{multline*}
Clearly $V_k$ is a proper subvariety of $\mathbb{C}^{3(n-k)}$.  By our inductive assumption, there is some $\slz$-finite collection $\mathcal{F}_k$ of proper $\uh$-special subvarieties of $\uh^{n-k}$.  Every $j'$-generic $\uh$-special point in $J^{-1}(V_k)$ is contained in some $F\in\mathcal{F}_k$.  Let
\[\mathcal{G}_2^*=\{\omega(\uh^k\times F):F\in\mathcal{F}_k,1\leq k< n,\omega\text{ a permutation of coordinates}\}.\]
Then $\mathcal{G}_2^*$ is $\slz$-finite.

Let 
\[\mathcal{G}^*=\mathcal{G}_1^*\cup\mathcal{G}_2^*,\]
and let 
\[V^*=\JClose{\bigcup \mathcal{G}^*}.\]
Since $\mathcal{G}^*$ consists of $\slz$-finitely many proper $\uh$-special subvarieties, $V^*$ is a proper subvariety of $\mathbb{C}^{3n}$.  

Suppose now that $J^{-1}(V\setminus V^*)$ contains $\slz$-infinitely many $j'$-generic quadratic points.  In particular, there is some $j'$-generic quadratic $\tau\in V\setminus V^*$ with $D(\tau)>D=D(V)$.  By \ref{thrm:MainPointCountingThrm}, there is some $\uh$-special set $H$ with
\[\tau\in H \hookrightarrow V.\]
Now, $H$ is a translate of some basic $\uh$-special variety $B\suq\uh^k$.  If $B$ is a proper subvariety of $\uh^k$, then $H$ should have been accounted for by $\mathcal{G}_1^*$, whence $J(H)\suq V^*$, which contradicts $\tau\not\in V^*$.  So we must have $B=\uh^k$.  

So, up to permutation of coordinates, we have
\[H=\uh^k\times\{\tau_{k+1},\dots,\tau_n\}.\]
As in the case $n=1$ above, the fact that $H \hookrightarrow V$ (via some data including some positive real numbers $c_i$) tells us that
\[J(\sigma_1,\dots,\sigma_k,\tau_{k+1},\dots,\tau_n)\in V\]
whenever $\Imm\sigma_i=c_i$.  By analytic continuation we then see that
\[J(\uh^k\times\{\tau_{k+1},\dots,\tau_n\})\suq V,\]
whence
\[\mathbb{C}^{3k}\times\{J(\tau_{k+1},\dots,\tau_n)\}\suq V.\]
So $(\tau_{k+1},\dots,\tau_n)$ is an $\uh$-special point of $J^{-1}(V_k)$.  Moreover, it is $j'$-generic since $\tau$ was.  Hence $(\tau_{k+1},\dots,\tau_n)$ should have been accounted for by $\mathcal{G}_2^*$.  Hence $\tau\in V^*$, which is a contradiction.

So $J^{-1}(V\setminus V^*)$ can only contain $\slz$-finitely many $j'$-generic quadratic points, whence the $j'$-generic quadratic points in $J^{-1}(V)$ are accounted for by the $\slz$-finite collection $\mathcal{G}^*$, together with $\slz$-finitely many additional points.
 \end{proof}

\end{thrmREP}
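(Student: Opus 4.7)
I would proceed by induction on $n$, the main engine being Theorem \ref{thrm:MainPointCountingThrm}, which for each sufficiently deep $j'$-generic quadratic point $\tau\in J^{-1}(V)$ produces an $\uh$-special set $H$ with $\tau\in H\hookrightarrow V$, together with Proposition \ref{propn:BasicSpecialCount}, which controls the possible basic shapes of such $H$. Throughout I would discard up front the (clearly $\slz$-finite) set of quadratic points with a coordinate in $\slz\cdot\{i,\rho\}$, so that genericity arguments apply cleanly.

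For the base case $n=1$, suppose for contradiction that $J^{-1}(V)$ contained $\slz$-infinitely many $j'$-generic quadratic points. Then one of them would have discriminant beyond the threshold $D(V)$, so Theorem \ref{thrm:MainPointCountingThrm} would yield an $\uh$-special $G\suq\uh$ with $G\hookrightarrow V$. Since proper $\uh$-special subvarieties of $\uh$ account for only $\slz$-finitely many quadratic points, the only possibility is $G=\uh$. Unwrapping adjacency, this gives $(j(\tau),j'(\tau)z,p_c(j(\tau),\chi^*(\tau),j'(\tau)z))\in V$ for some fixed $c>0$, $z\in\mathbb{C}$ and every $\tau\in\uh$; taking Zariski closure over $\mathbb{C}$ in the middle coordinate, then specialising to $\Imm\tau=c$, forces $J(\tau)\in V$ on a horizontal line, hence on all of $\uh$ by analytic continuation, contradicting properness of $V$.

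For the inductive step, I would construct a single proper subvariety $V^*\subsetneq\mathbb{C}^{3n}$ of the form $V^*=\JClose{\bigcup\mathcal{G}^*}$, with $\mathcal{G}^*=\mathcal{G}_1^*\cup\mathcal{G}_2^*$ an explicit $\slz$-finite collection of \emph{proper} $\uh$-special subvarieties of $\uh^n$. The first piece $\mathcal{G}_1^*$ collects all coordinate permutations and $\slz^n$-translates of $B\times\uh^{n-k}$, where $B$ ranges over the finite list of proper basic $\uh$-special varieties produced by Proposition \ref{propn:BasicSpecialCount} applied to $V$; this is designed to swallow any $H\hookrightarrow V$ whose basic part is strictly smaller than a full factor $\uh^k$. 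The second piece $\mathcal{G}_2^*$ collects permutations of $\uh^k\times F$, where for each $k<n$, $F$ ranges over the $\slz$-finite collection given by the inductive hypothesis applied to the proper variety $V_k\suq\mathbb{C}^{3(n-k)}$ consisting of those tails $\mathbf{X}$ with $\mathbb{C}^{3k}\times\{\mathbf{X}\}\suq V$ (up to permutation of coordinates); this is designed to swallow the positive-dimensional basic pieces that \emph{are} full factors.

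To finish, I would show $J^{-1}(V\setminus V^*)$ contains only $\slz$-finitely many $j'$-generic quadratic points. Otherwise some such $\tau$ would have discriminant exceeding $D(V)$, and Theorem \ref{thrm:MainPointCountingThrm} would deliver $\tau\in H\hookrightarrow V$ with $H$ an $\slz$-translate of $B\times\{\tau_{k+1},\dots,\tau_n\}$ up to permutation. If $B\subsetneq\uh^k$ then Proposition \ref{propn:BasicSpecialCount} places $H$ inside $\mathcal{G}_1^*$, so $\tau\in V^*$, a contradiction. Otherwise $B=\uh^k$, and I would re-use the base-case analytic-continuation trick to upgrade $H\hookrightarrow V$ to the genuine algebraic inclusion $\mathbb{C}^{3k}\times\{J(\tau_{k+1},\dots,\tau_n)\}\suq V$; then $(\tau_{k+1},\dots,\tau_n)$ is a $j'$-generic $\uh$-special point of $J^{-1}(V_k)$ (genericity inherited from $\tau$), hence covered by some $F\in\mathcal{F}_k$, placing $\tau$ in $\mathcal{G}_2^*$ after all. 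The main delicate point I expect is this last analytic-continuation step: adjacency only pins down $J$-values on the imaginary-level set $\Imm\sigma_i=c_i$, so one has to pass from this ``thin'' information to an honest inclusion of complex algebraic varieties before the inductive hypothesis on $V_k$ can be invoked. The remaining bookkeeping over partitions and coordinate permutations needed to make $V_k$ proper and the $\mathcal{G}_i^*$ genuinely $\slz$-finite is routine.
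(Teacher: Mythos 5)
Your proposal follows the paper's proof essentially verbatim: the same induction on $n$, the same base case argument via adjacency plus Zariski closure plus specialization to $\Imm\tau = c$ plus analytic continuation, the same construction of $V^* = \JClose{\bigcup\mathcal{G}^*}$ from $\mathcal{G}_1^*$ (built from Proposition \ref{propn:BasicSpecialCount}) and $\mathcal{G}_2^*$ (built from the inductive hypothesis applied to the ``tail'' varieties $V_k$), and the same dichotomy in the finishing argument depending on whether the basic part $B$ is proper in $\uh^k$ or equal to it. You also correctly single out the analytic-continuation step as the delicate point where adjacency information on a thin level set must be upgraded to a genuine algebraic inclusion before the inductive hypothesis applies; this matches the paper's use of the $n=1$ trick in the inductive step.

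One small cavil: in the base case you justify $G=\uh$ by saying ``proper $\uh$-special subvarieties of $\uh$ account for only $\slz$-finitely many quadratic points.'' As stated this does not immediately close the deal, since Theorem \ref{thrm:MainPointCountingThrm} is applied to one point of large discriminant and a priori could return $G=\{\tau\}$. The correct justification (also left implicit in the paper) is that the $G$ produced by Theorem \ref{thrm:MainPointCountingThrm} contains the arc $\widehat S$ from its own proof and is therefore positive-dimensional, which for $n=1$ forces $G=\uh$. This is a harmless gap in exposition, not in substance, and it is shared with the paper.
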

In the next section, we will state a more precise version of Conjecture \ref{conj:PartialAOwDerivs}, and prove it under the assumption of \ref{conj:AlgIndep}.  Before we can do so, we will take the time now to note the following.  Proposition \ref{propn:BasicSpecialCount} clearly holds uniformly, and by using the uniform Pila-Wilkie Theorem, we can also get a uniform version of Theorem \ref{thrm:MainPointCountingThrm}.  Using this uniformity, it is easy to get the following uniform version of \ref{thrm:PartialAOwDerivs}.

\begin{thrm}\label{thrm:UniformPartialAO}
 Let $V\suq\mathbb{C}^{3n+k}$ be an algebraic variety defined over $\overline{\mathbb{Q}}$, considered as an algebraic family of varieties, \[V_{\mathbf{a}}\suq\mathbb{C}^{3n}, \quad\mathbf{a}\in\mathbb{C}^k.\]
 For each positive integer $r$, there is an $\slz$-finite collection $\sigma_r(V)$, consisting of proper $\uh$-special subvarieties of $\uh^n$, with the following property.  
 
 Whenever $\mathbf{a}\in\overline{\mathbb{Q}}^k$ satisfies $\max [\mathbb{Q}(a_i):\mathbb{Q}]\leq r$ and $V_{\mathbf{a}}$ is a \emph{proper} subvariety of $\mathbb{C}^{3n}$, every $j'$-generic quadratic point in $J^{-1}(V_{\mathbf{a}})$ is contained in some $G\in\sigma_r(V)$.
\end{thrm}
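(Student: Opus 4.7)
The plan is to mimic the proof of Theorem \ref{thrm:PartialAOwDerivs} step by step, replacing each non-uniform ingredient with its uniform analogue. The induction on $n$ carries over without modification; the work is in checking that each of the two counting inputs (Theorem \ref{thrm:MainPointCountingThrm} and Proposition \ref{propn:BasicSpecialCount}) admits a version that is uniform in the parameter $\mathbf{a}$, provided we restrict to $\mathbf{a} \in \overline{\mathbb{Q}}^k$ of bounded coordinate-degree $r$.

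First I would establish a uniform version of Theorem \ref{thrm:MainPointCountingThrm}: there exists $D = D(V, r)$ such that for every $\mathbf{a} \in \overline{\mathbb{Q}}^k$ with $\max_i [\mathbb{Q}(a_i):\mathbb{Q}] \leq r$ and every $j'$-generic quadratic point $\tau \in J^{-1}(V_{\mathbf{a}})$ with discriminant exceeding $D$, there is an $\uh$-special $G \ni \tau$ with $G \hookrightarrow V_{\mathbf{a}}$. To see this, one re-runs the proof of Theorem \ref{thrm:MainPointCountingThrm} with the definable family $\mathcal{Z}_{\sigma,g}$ replaced by the definable family fibred additionally over the parameter $\mathbf{a} \in \mathbb{C}^k$; the variety $V_{\sigma, g}$ is then defined over $K(\mathbf{a}, \sigma, \Imm \sigma)$, whose degree over $\mathbb{Q}$ is bounded by $4n \cdot r^k \cdot [K : \mathbb{Q}]$, a quantity depending only on $V$ and $r$. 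The Siegel bound $[\mathbb{Q}(j(\tau)) : \mathbb{Q}] \gg D(\tau)^{1/4}$ is absolute, so the Galois count still beats this constant degree bound, and the uniform Pila--Wilkie theorem then yields a single threshold $D$ working across the family.

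Next I would upgrade Proposition \ref{propn:BasicSpecialCount} to a uniform statement. The condition ``$B$ is a proper basic linear variety maximally adjacent to $V_{\mathbf{a}}$'' is definable jointly in $B$ and $\mathbf{a}$, so the set of such pairs forms a definable subset of the parameter space for basic linear varieties crossed with $\mathbb{C}^k$. By Corollary \ref{cor:BasicsAreSpecial} every fibre consists of $\uh$-special varieties, hence lives in a countable set parametrised by the $g_{i,j} \in \gl$. A countable definable set is finite, and by a uniform-finiteness-type argument (as in the definability proof of Proposition \ref{propn:BasicSpecialCount}) the cardinality of the fibres is bounded uniformly in $\mathbf{a}$. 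The resulting family of basic $\uh$-special varieties, taken as $\mathbf{a}$ ranges over $\overline{\mathbb{Q}}^k$ with coordinate-degree at most $r$, forms an $\slz$-finite collection $\mathcal{G}_r$, because the underlying $g_{i,j} \in \gl$ involved in any $B$ arising this way are controlled by heights polynomial in the data, while only $\slz$-finitely many $\slz$-equivalence classes can appear in a definable family.

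With these two uniform ingredients in hand, I would run the induction on $n$ exactly as before. The base case $n = 1$ is unchanged: if $J^{-1}(V_{\mathbf{a}})$ contained a $j'$-generic quadratic point of discriminant exceeding $D(V, r)$, then $\uh \hookrightarrow V_{\mathbf{a}}$, and analytic continuation forces $V_{\mathbf{a}} = \mathbb{C}^3$, contradicting properness. For the inductive step, the ``contained translate'' variety $V_k \subseteq \mathbb{C}^{3(n-k) + k}$ (defined fibre-wise from $V$) is itself an algebraic family over $\overline{\mathbb{Q}}$, so the inductive hypothesis supplies an $\slz$-finite $\sigma_r(V_k)$. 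Taking
\[\sigma_r(V) = \mathcal{G}_r^* \cup \bigcup_{1 \leq k < n} \{\omega(\uh^k \times F) : F \in \sigma_r(V_k),\ \omega \text{ a permutation}\},\]
where $\mathcal{G}_r^*$ is built from $\mathcal{G}_r$ by taking products with $\uh^{n-k}$, $\slz$-translates, and coordinate permutations, gives the desired $\slz$-finite collection. The argument that any unaccounted-for $j'$-generic quadratic point must lie in one of these classes proceeds verbatim as in the proof of Theorem \ref{thrm:PartialAOwDerivs}.

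The main obstacle is the uniformisation of Proposition \ref{propn:BasicSpecialCount}: specifically, verifying that the collection of maximal basic linear varieties adjacent to some fibre $V_{\mathbf{a}}$ (as $\mathbf{a}$ varies over all number-field points of bounded degree) is captured by $\slz$-finitely many $\uh$-special varieties. The uniform bound on the Galois count in Theorem \ref{thrm:MainPointCountingThrm} is comparatively routine once one tracks the degree of the field of definition of $V_{\mathbf{a}}$, but the finiteness statement for the family of basic varieties requires a careful definability argument in a family, combined with the height bounds from \cite[Proposition 5.2]{Pila2011} to constrain the $\slz$-equivalence classes that appear.
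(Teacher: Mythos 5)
Your proposal matches the paper's approach: the paper proves Theorem \ref{thrm:UniformPartialAO} only by the remark that Proposition \ref{propn:BasicSpecialCount} holds uniformly, that Theorem \ref{thrm:MainPointCountingThrm} uniformises via the uniform Pila--Wilkie theorem (plus a degree bound on the field of definition), and that the induction then runs verbatim, which is exactly what you do. One small cleanup to the part you flagged as the main obstacle: no height bound or count of $\slz$-equivalence classes is needed to uniformise Proposition \ref{propn:BasicSpecialCount}, and the restriction to bounded-degree $\mathbf{a}$ is irrelevant there. The union over \emph{all} $\mathbf{a}\in\mathbb{C}^k$ of the collections of proper basic linear varieties maximally adjacent to $V_{\mathbf{a}}$ (and meeting $\mathbb{F}^k$ in full dimension) is definable, being a projection of a definable set, and by Corollary \ref{cor:BasicsAreSpecial} applied fibre-wise it consists only of $\uh$-special varieties, hence lies in a countable parameter set; a countable definable set is finite, so the whole collection is finite outright. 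The bounded-degree hypothesis enters only in the Galois count for Theorem \ref{thrm:MainPointCountingThrm}, exactly as in the degree computation $[K(\mathbf{a},\sigma,\Imm\sigma):\mathbb{Q}]\leq 4n\,r^k\,[K:\mathbb{Q}]$ you give.
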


\section{A More Precise Statement}
Readers familiar with the normal shape of Andr\'e-Oort statements may have noticed that \ref{conj:PartialAOwDerivs} is rather weaker than one might expect.  Even taking into account the action of $\slz$, a more natural analogue of the classical case might look like the following.

\begin{statement}\label{conj:FalseAO}
 Let $V\suq\mathbb{C}^{3n}$ be an algebraic variety defined over $\overline{\mathbb{Q}}$.  Then the collection of maximal $J$-special subvarieties of $V$ is $\slz$-finite.
\end{statement}
This turns out to be false.  The reason for its failure is fairly simple; the modular relations that relate $j'(g\tau)$ (for some $g\in\gl$) with $j'(\tau)$ include not just $j(\tau)$, $j'(\tau)$, $j(g\tau)$ and $j'(g\tau)$, but also include instances of $m_g(\tau)=(c\tau+d)^2/N$.  With the right polynomial, one can therefore enforce arbitrary relations between $m_g(\tau)$ and other $m_h(\sigma)$ arising in other coordinates.

Similarly, the modular relation for $j''$ introduces new variables to the equation.  Indeed, by differentiating the modular polynomials one can find a rational function $\mu_N$ (in 7 variables) such that
\[j''(g\tau)=\mu_N(j(\tau),j(g\tau),j'(\tau),j'(g\tau),j''(\tau),c,(c\tau+d)),\]
where $g=\begin{pmatrix}a&b\\c&d\end{pmatrix}$ is a primitive integer matrix of determinant $N$.  Moreover, $\mu_N$ is linear in the $c$-coordinate.  Hence we are able to enforce relations on the $c$ that can arise, as well as the $m_g(\tau)$.  

\begin{eg}Let us see some examples to illustrate this issue.

 \begin{enumerate}
  \item Let $W\suq\mathbb{C}^2$ be an algebraic variety defined over $\mathbb{Q}$.  Suppose that $W$ has at least one solution $(x,y)$ where $x$ and $y$ are both squares of quadratic points in $\uh$.  Fix two positive integers $M$ and $N$. 
 
 Writing a general element of $\mathbb{C}^{12}$ as $(X_1,Y_1,Z_1,\dots,X_4,Y_4,Z_4)$, consider the variety $V\suq\mathbb{C}^{12}$ defined (over $\overline{\mathbb{Q}}$) by
 \[\Phi_M(X_1,X_2)=0, \quad \Phi_N(X_3,X_4)=0,\]
 \[\left(\dfrac{-Y_2}{Y_1\lambda_M(X_1,X_2)},\dfrac{-Y_4}{Y_3\lambda_N(X_3,X_4)}\right)\in W.\]
 Then the special points of $J^{-1}(V)$ are precisely the points $(\tau,g\tau,\sigma,h\sigma)$, where $g$ and $h$ are (arbitrary) primitive integer matrices with determinant $M$ and $N$ respectively, and $\tau$, $\sigma$ are quadratic points satisfying
 \[(m_g(\tau),m_h(\sigma))\in W.\]
 Since $W$ has at least one solution which is a square of a quadratic point, we can certainly find $\tau$ and $\sigma$ to solve this equation.  Indeed, by modifying $\tau$ and $\sigma$ we can solve this equation for any $g$ and $h$ of the right determinant.  The resulting collection of special points is certainly $\slz$-infinite, but no positive-dimensional $\uh$-special variety is contained in $J^{-1}(V)$.  
 
 This example therefore serves as a counterexample to the hypothetical statement \ref{conj:FalseAO}.  Since the points all lie within the $\slz$-translates of the $\uh$-special set
 \[\{(\tau_1,g\tau_1,\tau_2,h\tau_2):\tau_i\in\uh\},\]
 our main theorem \ref{thrm:PartialAOwDerivs} is still fine!  Let us see one more example.
 
  \item Fix a quadratic point $\sigma\in\uh$ and $\gamma=\begin{pmatrix}a&b\\c&d\end{pmatrix}\in\slz$.  Then
  \[(c\sigma+d)^2=m_\gamma(\sigma)=\dfrac{j'(\gamma\sigma)}{j'(\sigma)}\]
  and 
  \[c=\dfrac{j''(\gamma\sigma)-j''(\sigma)(c\sigma+d)^4}{2(c\sigma+d)^3j'(\tau)},\]
  whence 
  \begin{align*}c^2&=\dfrac{\big(j''(\gamma\sigma)-j''(\sigma)(c\sigma+d)^4\big)^2}{4(c\sigma+d)^6j'(\sigma)^2}\\
  &=\dfrac{\left(j''(\gamma\sigma)-j''(\sigma)\left(\dfrac{j'(\gamma\sigma)}{j'(\sigma)}\right)^2\right)^2}{4\left(\dfrac{j'(\gamma\sigma)}{j'(\sigma)}\right)^3j'(\sigma)^2}.\end{align*}
  So for the appropriate rational function $q$ we have
  \[c^2=q(j'(\sigma),j'(\gamma\sigma), j''(\sigma),j''(\gamma\sigma)).\]
  Given a variety $W\suq\mathbb{C}^2$, defined over $\overline{\mathbb{Q}}$, we can then define $V\suq\mathbb{C}^9$ by
  \[\Phi_N(X_1,X_2), \quad X_3=j(\sigma),\]
  \[\forall w\in\mathbb{C}, \left(\dfrac{-Y_2}{Y_1\lambda_N(X_1,X_2)},q\big[w,Y_3, p_{\Imm\sigma}(j(\sigma),\chi^*(\sigma),w),Z_3\big]\right)\in W.\]
  Then the $\uh$-special points of $J^{-1}(V)$ are exactly those points
  \[(\tau,g\tau,\gamma\sigma),\]
  where $g=\begin{pmatrix}a&b\\c&d\end{pmatrix}$ is any primitive integer matrix of determinant $N$, $\gamma=\begin{pmatrix}A&B\\C&D\end{pmatrix}\in\slz$ and 
  \[((c\tau+d)^2,C^2)\in W.\]
  Once again, if $W$ is suitable then this is an $\slz$-infinite collection, but no positive dimensional $\uh$-special set lies in $J^{-1}(V)$.  
 \end{enumerate}
\end{eg}

With variants of the examples above, one can produce varieties whose special points satisfy almost any arbitrary relation, provided the relation is written in terms of variables $c$, $(c\tau+d)$ corresponding to matrices $g\in\gl$, and $C,D$ corresponding to some $\slz$-translate of a fixed $\sigma$.

The idea of our stronger result is that these relations should be the only obstruction to a result like \ref{conj:FalseAO}.  In order to state this precisely, we will need to go through some technicalities.\\

Given a proper $\uh$-special set $G\suq\uh^n$, there is an underlying partition of $\{1,\dots,n\}$ which can be written as
\[S_0\cup S_1\cup\dots\cup S_h\cup T_1\cup\dots\cup T_k,\]
with only $S_0$ allowed to be empty and only the $T_i$ allowed to be singletons.  (The condition that $G$ is proper is equivalent to requiring that $k < n$.)  For $i>0$, let $r_i=\# S_i-1$ and let $s_i$ be the smallest element of $s_i$.  Also associated to $G$ are some matrices $g_{i,1},\dots,g_{i,r_i}\in\gl$, so that each coordinate in $S_i$ (except the $s_i$ coordinate) is defined by $\tau=g_{i,j}\tau_{s_i}$.

Given such a $G$ and given a tuple of matrices
\begin{equation}\label{eqn:SLZComp}\gamma=(\gamma_1,\dots,\gamma_{\#S_0},\gamma_{1,1},\dots,\gamma_{1,r_1},\dots,\gamma_{h,1},\dots,\gamma_{h,r_h})\in\slz^{\#S_0+\sum r_i},\end{equation}
let $c_i,d_i$ be the bottom row of $\gamma_i$, and $c_{i,j},d_{i,j}$ be the bottom row of the matrix $\gamma_{i,j}g_{i,j}$.\\

This is all building towards the following definitions.  

\begin{defn}
 A variety $W\suq \mathbb{C}^{2\# S_0+2\sum r_i}$, defined over $\overline{\mathbb{Q}}$, is called a $G$-variety.  For a $G$-variety $W$ and a given $\gamma\in\slz^{\#S_0+\sum r_i}$ (as above), we define $W^\gamma\suq\uh^{\{s_1,\dots,s_h\}}$ to be the set of $(\tau_{s_1},\dots,\tau_{s_h})$ such that
\[(\dots,c_i,d_i,\dots,c_{i,j}, c_{i,j}\tau_{s_i}+d_{i,j},\dots)\in W.\]
\end{defn}
If $\gamma$ is an element of the full group $\slz^n$, then it consists of $\gamma'\in\slz^{\#S_0+\sum r_i}$, as above, together with some more matrices
\[\alpha_{s_1},\dots,\alpha_{s_h}\in\slz,\]
corresponding to the $s_i$-coordinates, and
\[\beta_1,\dots\beta_k\in\slz\]
corresponding to the singleton coordinates in the $T_i$.  For such a $\gamma\in\slz^n$, we will abuse notation and write
\[W^\gamma=(\alpha_{s_1},\dots,\alpha_{s_h})\cdot W^{\gamma'}.\]
(The $\beta_i$ have no meaningful effect.)  

\begin{notn}
 We will write
 \[\Sp(\gamma,W)=\{(\tau_1,\dots,\tau_n)\in\uh^n: (\tau_{s_1},\dots,\tau_{s_h})\in W^\gamma\text{ and every }\tau_{i}\text{ is quadratic}\}.\]
\end{notn}

In the case where $h=0$, so that we have no $\tau$-coordinates to work with, the variety $W^\gamma$ only enforces conditions on the $\gamma$ corresponding to the $S_0$-coordinates.  In this case, we will use the convention that
\[\Sp(\gamma,W)=\uh^n\]
if $(\dots,c_i,d_i,\dots)\in W$, and
\[\Sp(\gamma,W)=\emptyset\]
otherwise.

Before we can state our more precise version of \ref{conj:PartialAOwDerivs}, we need one more definition.
\begin{defn}
A pair $(G,W)$, with $G$ a proper $\uh$-special set and $W$ a $G$-variety is said to be \emph{geodesically minimal} if 
\[\bigcup_{\gamma\in\slz^n}\Sp(\gamma,W)\]
is \emph{not} contained in any $\slz$-finite collection of proper $\uh$-special varieties.\\
\end{defn}

\begin{thrm}[Precise Modular Andr\'e-Oort with Derivatives]\label{thrm:PrecAOwDerivs}
Assume Conjecture \ref{conj:AlgIndep}.  Let $V\suq\mathbb{C}^{3n}$ be an algebraic variety defined over $\overline{\mathbb{Q}}$.  Then there is a finite collection $\sigma(V)$ of $\uh$-special subvarieties of $\uh^n$, and for each $G\in\sigma(V)$ an associated $G$-variety $W_G$, with the following properties.
\begin{itemize}
 \item For every $G\in\sigma(V)$, $(G,W_G)$ is geodesically minimal.
 \item The set of quadratic points in $J^{-1}(V)$ is precisely
 \[\bigcup_{\substack{G\in\sigma(V)\\\gamma\in\slz^n}}\gamma\cdot G\cap \Sp(\gamma,W).\]
\end{itemize}
\end{thrm}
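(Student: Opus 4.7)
The plan is to bootstrap from Theorem \ref{thrm:PartialAOwDerivs}. Conjecture \ref{conj:AlgIndep} makes every quadratic point $j'$-generic unless it has a coordinate in $\slz\cdot\{i,\rho\}$; the exceptional points trivially lie in an $\slz$-finite collection of proper $\uh$-special subvarieties, so combining these with the conclusion of \ref{thrm:PartialAOwDerivs} yields an $\slz$-finite collection $\mathcal{C}$ of proper $\uh$-special subvarieties of $\uh^n$ covering \emph{every} quadratic point of $J^{-1}(V)$. I would take $\sigma(V)$ to begin as the finite set of $\slz^n$-orbit representatives of $\mathcal{C}$, and then refine as described below.

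For each $G \in \sigma(V)$, decompose $G$ according to its underlying partition $S_0 \cup S_1 \cup \dots \cup S_h \cup T_1 \cup \dots \cup T_k$ with defining matrices $g_{i,j}$ and imaginary quadratic constant coordinates $q_j$. A quadratic point in $\slz^n \cdot G \cap J^{-1}(V)$ is of the form $\gamma\cdot\sigma$ where $\sigma = (\sigma_1, g_{1,1}\sigma_1, \dots, g_{h,r_h}\sigma_h, q_1, \dots, q_{\#S_0})$ with the $\sigma_i$ (after reducing to $\mathbb{F}$) quadratic representatives of distinct $\gl$-orbits. For each defining polynomial $F$ of $V$, using the transformation laws for $j', j''$ under $\slz$ together with the formulas expressing $j'(g\tau)$ and $j''(g\tau)$ rationally in $j(\tau), j(g\tau), j'(\tau), j''(\tau)$ and the data $c, c\tau+d$, one rewrites $F(J(\gamma\sigma)) = 0$ as a polynomial identity of the shape
\[P\bigl(j'(\sigma_1), \dots, j'(\sigma_h); \,\dots, c_i, d_i, \dots, c_{i,j}, c_{i,j}\tau_{s_i}+d_{i,j}, \dots\bigr) = 0,\]
whose coefficients are $\overline{\mathbb{Q}}$-polynomials in $j(\sigma_i), \chi^*(\sigma_i), j(q_j), \chi^*(q_j), j'(q_j), j''(q_j)$ (all algebraic, since the $q_j$ are imaginary quadratic and $j''(q_j)$ is algebraic over $j'(q_j)$ via $p_{\Imm q_j}$). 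Since the $\sigma_i$ are $j'$-generic, Conjecture \ref{conj:AlgIndep} gives that $j'(\sigma_1), \dots, j'(\sigma_h)$ are algebraically independent over $\overline{\mathbb{Q}}$, forcing each coefficient of $P$ to vanish individually as a polynomial in the matrix data. Collecting these vanishing conditions over the defining polynomials of $V$ cuts out a $G$-variety $W_G \suq \mathbb{C}^{2\#S_0+2\sum r_i}$, defined over $\overline{\mathbb{Q}}$, with the property that the quadratic points of $\gamma\cdot G$ lying in $J^{-1}(V)$ are exactly $\gamma\cdot G \cap \Sp(\gamma, W_G)$.

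To ensure geodesic minimality I refine $\sigma(V)$ iteratively: if some $(G, W_G)$ fails to be geodesically minimal, then by definition there is an $\slz$-finite family of proper $\uh$-special $G' \subsetneq G$ containing $\bigcup_\gamma \Sp(\gamma, W_G)$; replace $G$ in $\sigma(V)$ by a finite set of $\slz^n$-orbit representatives of those $G'$ and repeat the construction of the associated $G'$-variety. Because the $\uh$-special subvarieties of $\uh^n$ form a well-founded poset under inclusion, this recursion terminates in a finite collection $\sigma(V)$ satisfying both bullet points of the statement.

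The main obstacle lies in the construction of $W_G$. One must check that the coefficient-separation step really yields a variety defined over $\overline{\mathbb{Q}}$: this requires identifying precisely which algebraic combinations of $j(\sigma_i), \chi^*(\sigma_i)$ appear (so that their algebraicity is evident), and clearing the denominators coming from the factors $m_{g_{i,j}}(\sigma_i)$ without introducing spurious dependencies on the $j'(\sigma_i)$. One must also verify the compatibility of the $\slz^n$-action on $\gamma\cdot G$ with the definition of $W_G^\gamma$ via the bottom-row conjugation $(c_{i,j}, d_{i,j}) = \text{bottom row of }\gamma_{i,j}g_{i,j}$; this compatibility is precisely what the notation introduced before the theorem was engineered to encode, but extracting it cleanly from the transformation formulas for $j'$ and $j''$ is the technical heart of the argument.
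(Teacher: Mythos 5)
Your proof has a genuine gap in the construction of $W_G$, which you flag as the ``main obstacle'' but do not resolve, and which turns out to require a completely different technique from what you propose. You derive, from the algebraic independence of $j'(\sigma_1),\dots,j'(\sigma_h)$, a system of vanishing conditions on the matrix data whose coefficients are $\overline{\mathbb{Q}}$-polynomials in $j(\sigma_i),\chi^*(\sigma_i),\dots$. Even granting that these coefficients are algebraic numbers, they \emph{depend on the chosen quadratic point $\sigma$} through $j(\sigma_i)$ and $\chi^*(\sigma_i)$. Different quadratic $\sigma\in G$ would therefore yield different systems and hence different candidate $G$-varieties, whereas the theorem requires a \emph{single} $W_G$ depending only on $V$ and $G$. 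Nothing in your argument explains why the conditions stabilise across all $\sigma$. The paper's proof handles exactly this issue by a step you have omitted entirely: it packages the constructions $V_{\gamma,\tau}$ into an algebraic family $\widehat{V}$ and applies the \emph{uniform} version of Theorem \ref{thrm:PartialAOwDerivs}, namely Theorem \ref{thrm:UniformPartialAO}. This yields a dichotomy: for each $(\tau,\gamma)$ either $\tau$ lies in one of $\slz$-finitely many proper subvarieties $H\subsetneq G$ (which are fed back into $\sigma(V)$ for the recursion), or $V_{\gamma,\tau}=\mathbb{C}^{3(h+k)}$, in which case one may substitute $J(z_1,\dots,z_{h+k})$ for \emph{arbitrary} $z_i\in\uh$ — and this substitution is what produces a condition on the matrix data alone, independent of the specific $\sigma$. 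This dichotomy is the technical heart of the paper's argument and has no counterpart in your proposal; the algebraic-independence observation, while correct and indeed used at an earlier stage (it is what makes Theorem \ref{thrm:PartialAOwDerivs} applicable), does not substitute for it.

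There is also a factual error: you assert that $j(q_j),\chi^*(q_j),j'(q_j),j''(q_j)$ are ``all algebraic, since the $q_j$ are imaginary quadratic.'' That is false for $j'(q_j)$ and $j''(q_j)$: as the paper recalls in the introduction (citing Diaz), these are transcendental for every quadratic $q_j\notin\slz\cdot\{i,\rho\}$. Consequently the coefficients of your polynomial $P$ would in general be transcendental, and the argument that ``algebraic independence forces each coefficient to vanish'' would not apply in the form you state it. The paper deals with this by taking $\overline{\mathbb{Q}}$-Zariski closures, which replaces $j'(\gamma_i\sigma_i)$ and $j''(\gamma_i\sigma_i)$ by free variables; the correct way to incorporate the $q_j$ in your approach would be to absorb $j'(q_j)$ into the list of algebraically independent quantities (and this in turn raises the question, which you do not address, of whether the $q_j$ lie in distinct $\operatorname{GL}_2^+(\mathbb{Q})$-orbits from each other and from the $\sigma_i$, which is not guaranteed by the definition of an $\uh$-special variety).
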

\begin{proof}
 Under the assumption of Conjecture \ref{conj:AlgIndep}, Theorem \ref{thrm:PartialAOwDerivs} yields a finite collection $\sigma(V)$ of proper $\uh$-special subvarieties, such that the special points of $J^{-1}(V)$ are contained in
 \[\bigcup_{\substack{G\in\sigma(V)\\\gamma\in\slz^n}}\gamma\cdot G.\]
 Let us look first at a single $G\in\sigma(V)$, and associate some data to it, as in the definitions above.  Associated to $G$ is a partition of $\{1,\dots,n\}$,
 \[S_0\cup S_1\cup\dots\cup S_h\cup T_1\cup\dots\cup T_k,\]
 with $T_i$ singletons, $\#S_i>1$.  As above, we have some associated data
 \[s_i=\min S_i,\qquad r_i=\# S_i-1 \text{ for }i>0,\]
 \[\sigma_1,\dots,\sigma_{\#S_0}\in\uh\text{ quadratic}\]
 and
 \[g_{i,j}\in\gl\text{, a primitive integer matrix with determinant }N_{i,j},\text{ for }1\leq i \leq h, 1\leq j\leq r_i.\]
 For ease of notation, we will assume that the coordinates are ordered nicely, with the first few coordinates in $S_0$, the next few in $S_1$, and so on.
 
 Recall: for each $N$, there is a rational function $\mu_N$ with the property that
 \[j''(g\tau)=\mu_N(j(\tau),j(g\tau),j'(\tau),j'(g\tau),j''(\tau),c,(c\tau+d))\]
 whenever $g=\begin{pmatrix}a&b\\c&d\end{pmatrix}$ is a primitive integer matrix of determinant $N$.  Moreover, it will be useful later to know that
 \begin{multline}\label{eqn:muNCalc}\mu_N\left(j(\tau),j(g\tau),j'(\tau),j'(g\tau)\dfrac{(c_0\tau_0+d_0)^2}{(c\tau+d)^2},j''(\tau),c_0,(c_0\tau_0+d_0)\right)\\=j''(g\tau)\dfrac{(c_0\tau_0+d_0)^4}{(c\tau+d)^4}+j'(g\tau)\dfrac{2(c_0\tau_0+d_0)^3(c_0d-cd_0)}{(c\tau+d)^3}.\end{multline}
 This follows by a straightforward, if tedious, calculation.
 
 Given $\gamma\in\slr^{n}$, and a point $\tau=(\tau_1,\dots,\tau_{h})\in\uh^{h}$, define a variety
 \[V_{\gamma,\tau}\suq \mathbb{C}^{3(h+k)}\]
 as follows.  First, write $\gamma$ as in (\ref{eqn:SLZComp}).  That is, $\gamma$ consists of 
 \[\gamma'=(\gamma_1,\dots,\gamma_{\#S_0},\gamma_{1,1},\dots,\gamma_{1,s_1},\dots,\gamma_{h,1},\dots,\gamma_{h,s_h})\in\slr^{\# S_0+\sum r_i},\]
 \[\alpha_{s_1},\dots,\alpha_{s_h}\in\slz\] corresponding to the $s_i$ coordinates, and 
 \[\beta_1,\dots,\beta_k\in\slz\]
 corresponding to the singleton coordinates in the $T_i$.  Let $c_i$, $d_i$ be the bottom row of $\gamma_i$ and $c_{i,j},d_{i,j}$ the bottom row of $\gamma_{i,j}g_{i,j}$.

 Define $V_{\gamma,\tau}'$ by
 \[(X_1,Y_1,Z_1,\dots,X_{h+k},Y_{h+k},Z_{h+k})\in V_{\gamma,\tau}'\]\[\Longleftrightarrow\]
 \begin{multline*}\forall w_{i,j}\in\mathbb{C}\text{ with }\Phi_{N_{i,j}}(X_i,w_{i,j}), i\leq h,j\leq r_i,\\ \Bigl[\dots,j(\sigma_i),j'(\gamma_i\sigma_i),j''(\gamma_i\sigma_i),\dots,X_i,Y_i,Z_i,\dots,w_{i,j},-Y_i(c_{i,j}\tau_{s_i}+d_{i,j})^2\lambda_{N_{i,j}}(X_i,w_{i,j}),\\\mu_{N_{i,j}}\bigl(X_i,w_{i,j},Y_i,-Y_i(c_{i,j}\tau_{s_i}+d_{i,j})^2\lambda_{N_{i,j}}(X_i,w_{i,j}),Z_i,c_{i,j},c_{i,j}\tau_{s_i}+d_{i,j}\bigr),\dots\\\dots,X_{h+i},Y_{h+i},Z_{h+i},\dots\Bigr]\in V.\end{multline*}
 
 Taking $\overline{\mathbb{Q}}$-Zariski closures replaces the $j'(\gamma_i\sigma_i)$ and $j''(\gamma_i\sigma_i)$ by suitable rational functions involving $\sigma_i$, $\Imm\sigma_i$, $\chi^*(\sigma_i)$, $c_i$, $d_i$, and some complex numbers $w$ which are allowed to be arbitrary.  Making these replacements we get a variety $V_{\gamma,\tau}$, defined over $\overline{\mathbb{Q}}$, depending polynomially on $c_i,d_i,c_{i,j},(c_{i,j}\tau_{i}+d_{i,j})$ (and nothing else).  Thus each $V_{\gamma,\tau}$ is a fibre of an algebraic family of varieties $\widehat{V}$, defined over $\overline{\mathbb{Q}}$.  
 
 We now apply Theorem \ref{thrm:UniformPartialAO} to $\widehat{V}$.  We get an $\slz$-finite collection $\sigma_2\bigl(\widehat{V}\bigr)$, consisting of $\uh$-special subvarieties of $\uh^{(h+k)}$, such that for all $\gamma\in\slz^n$ and all quadratic $\tau\in\uh^h$, either \[V_{\gamma,\tau}=\mathbb{C}^{3(h+k)}\] or the $J$-special subvarieties of $V_{\gamma,\tau}$ are accounted for by $\sigma_2\bigl(\widehat{V}\bigr)$.  The $H\in\sigma_2\bigl(\widehat{V}\bigr)$ correspond in the obvious way to an $\slz$-finite collection $\mathcal{G}$ of proper $\uh$-special subvarieties of $G$.  We add all these $H\in\mathcal{G}$ to the overarching collection $\sigma(V)$.\\
 
 Now, a quadratic point lying in \[\bigcup_{\gamma\in\slz^n}\gamma\cdot G\cap J^{-1}(V)\] corresponds to a quadratic point $\tau=(\tau_1,\dots,\tau_n)\in G$ together with $\gamma\in\slz^n$ such that $\gamma\tau\in J^{-1}(V)$.  Such a pair $(\tau, \gamma)$ necessarily satisfies
 \[J(\tau')\in V_{\gamma,\tau},\]
 where
 \[\tau'=(\alpha_{s_1}\tau_{s_1},\dots,\alpha_{s_h}\tau_{s_h},\beta_1\tau_{n-k+1},\dots,\beta_k\tau_n).\]
 By the properties of $\sigma_2\bigl(\widehat{V}\bigr)$, either $\tau'\in H$, for some $H\in\sigma_2\bigl(\widehat{V}\bigr)$, or $V_{\gamma,\tau}=\mathbb{C}^{3(h+k)}$.  In the first case, we have $\tau\in H$, for some $H\in\mathcal{G}$.  
 
 Define a set
 \[R=\left\{(\tau,\gamma)\in\uh^n\times\slz^n:\begin{matrix}\tau\text{ is quadratic}, \tau\in G, J(\gamma\tau)\in V, \\\\\forall H\in\mathcal{G},\tau\not\in H\end{matrix}\right\}.\]
 If $R$ is empty, we can stop here, removing $G$ from $\sigma(V)$ entirely; it contributes no special points other than those already accounted for by $\mathcal{G}$.  
 
 If $R\ne\emptyset$, we continue.  By the properties of $\mathcal{G}$, every $(\tau,\gamma)\in R$ must satisfy
 \[V_{\gamma,\tau}=\mathbb{C}^{3(h+k)}.\]
  Hence, in the definition of $V_{\gamma,\tau}$, we can replace $(X_1,\dots,Z_{h+k})$ with $J(z_1,\dots,z_{h+k})$, for \emph{arbitrary} $z_i\in\uh$.  By (\ref{eqn:muNCalc}) and an easy calculation involving $\lambda_N$, we see that
 \begin{multline*}\label{eqn:DefnOfWz}\Bigl[\dots,J(\gamma_i\sigma_i),\dots,J(z_i),\dots\\\dots,j(h_{i,j}z_i),j'(h_{i,j}z_i)\dfrac{(c_{i,j}\tau_{s_i}+d_{i,j})^2}{\delta_{i,j}^2},j''(h_{i,j}z_i)\dfrac{(c_{i,j}\tau_{s_i}+d_{i,j})^4}{\delta_{i,j}^4}+2j'(h_{i,j}z_i)\dfrac{c_{i,j}(c_{i,j}\tau_{s_i}+d_{i,j})^3}{\delta_{i,j}^2},\dots\\\dots,J(z_{h+i}),\dots\Bigr]\in V\end{multline*}
 for all $(z_1,\dots,z_{h+k})\in\uh^{h+k}$.  Here $h_{i,j}$ is an upper triangular matrix in the $\slz$-orbit of $g_{i,j}$ and $\delta_{i,j}$ is its lower-right entry. 
 
 As before, by taking $\overline{\mathbb{Q}}$-Zariski closures we can replace the $J(\gamma_i\sigma_i)$ by some $\overline{\mathbb{Q}}$-rational functions of $c_i$ and $d_i$.  Thus the above equation, for each choice of $\mathbf{z}=(z_1,\dots,z_{h+k})$, defines a $G$-variety which we will call $W_{\mathbf{z}}$.  The intersection 
 \[W=\bigcap_{\mathbf{z}\in\uh^{h+k}}W_{\mathbf{z}}\]
 is still a $G$-variety.  (It is nonempty since $R$ is nonempty.)
 
 We have seen that every $(\tau,\gamma)\in R$ satisfies 
 \[(\tau_{s_1},\dots,\tau_{s_h})\in W^\gamma.\]
 Conversely, any quadratic $\mathbf{z}=(z_1,\dots,z_h)$, no matter its height, which is a solution of some $W^\gamma$, must come from a member of $\gamma\cdot G\cap J^{-1}(V)$.  Indeed, in this situation we have \begin{multline*}\Bigl[\dots,J(\gamma_i\sigma_i),\dots,J(z_i),\dots\\\dots,j(h_{i,j}z_i),j'(h_{i,j}z_i)\dfrac{(c_{i,j}z_i+d_{i,j})^2}{\delta_{i,j}^2},j''(h_{i,j}z_i)\dfrac{(c_{i,j}z_i+d_{i,j})^4}{\delta_{i,j}^4}+2j'(h_{i,j}z_i)\dfrac{c_{i,j}(c_{i,j}z_i+d_{i,j})^3}{\delta_{i,j}^2},\dots\\\dots,J(z_{h+i}),\dots\Bigr]\in V,\end{multline*}
 for all $(z_{h+1},\dots,z_{h+k})\in\uh^k$.
 
 Brief calculations involving the transformation laws for $j'$ and $j''$ show that 
 \[j'(h_{i,j}z_i)\dfrac{(c_{i,j}z_i+d_{i,j})^2}{\delta_{i,j}^2}=j'(\gamma_{i,j}g_{i,j}z_i)\dfrac{(c_{i,j}z_i+d_{i,j})^2}{(c_{i,j}z_i+d_{i,j})^2}=j'(\gamma_{i,j}g_{i,j}z_i)\]
 and that
 \[j''(h_{i,j}z_i)\dfrac{(c_{i,j}z_i+d_{i,j})^4}{\delta_{i,j}^4}+2j'(h_{i,j}z_i)\dfrac{c_{i,j}(c_{i,j}z_i+d_{i,j})^3}{\delta_{i,j}^2}=j''(\gamma_{i,j}g_{i,j}z_i)\dfrac{(c_{i,j}z_i+d_{i,j})^4}{(c_{i,j}z_i+d_{i,j})^4}=j''(\gamma_{i,j}g_{i,j}z_i).\]
 Thus we get
 \begin{equation*}\Bigl[\dots,J(\gamma_i\sigma_i),\dots,J(z_i),\dots,j(\gamma_{i,j}g_{i,j}z_i),j'(\gamma_{i,j}g_{i,j}z_i),j''(\gamma_{i,j}g_{i,j}z_i),\dots,J(z_{h+i}),\dots\Bigr]\in V,\end{equation*}
 for every $(z_{h+1},\dots,z_{h+k})\in\uh^k$.  Hence $\mathbf{z}$ corresponds to the point
 \[(\dots,\gamma_i\sigma_i,\dots,z_i,\dots,\gamma_ig_{i,j}z_i,\dots,z_{h+i},\dots)\in \gamma\cdot G\cap J^{-1}(V),\]
 for any choice of $z_{h+i}$.\\
 
 To sum up, we have seen that the quadratic points in $\gamma\cdot G\cap J^{-1}(V)$ consist precisely of:
 \begin{itemize}
  \item Those quadratic points that lie in some $H\in\mathcal{G}$.  
  \item Those points that corresponds to quadratic solutions of $W^\gamma$, that is
  \[\gamma\cdot G\cap \Sp(\gamma, W).\]
 \end{itemize}
 We are not claiming that these possibilities are mutually exclusive!
 
 Before we are done with $G$, we must check whether $(G,W)$ is geodesically minimal.  If it does happen to be geodesically minimal, we are done.  Otherwise, by definition, 
 \[\bigcup_{\gamma\in\slz^n}\Sp(\gamma, W)\]
 is contained in some $\slz$-finite collection of proper $\uh$-special subvarieties of $\uh^{h+k}$.  This is not a problem; we simply remove $G$ from $\sigma(V)$ entirely and replace it by the appropriate finite collection of proper subvarieties of $G$.\\
 
 This is as much as we can do with a given $G\in\sigma(V)$.  It may be helpful to have a brief summary here.  For the given $G$ we have done two things:
 \begin{enumerate}
  \item Either removed $G$ from $\sigma(V)$ entirely or associated to $G$ an geodesically minimal $G$-variety $W_G$.
  \item Added to $\sigma(V)$ some finite collection $\mathcal{G}$ of proper subvarieties of $G$.
 \end{enumerate}
 Moreover, the union of the $\uh$-special subvarieties of $\gamma\cdot G\cap J^{-1}(V)$ is precisely
 \[\gamma\cdot G\cap \Sp(\gamma,W_G),\]
 together with the $\uh$-special subvarieties of
 \[\bigcup_{H\in\mathcal{G}}\gamma\cdot H\cap J^{-1}(V).\] 
 
 This is enough to conclude the theorem.  Simply perform the above process to each $G\in\sigma(V)$ in turn, taking the $G$ in \emph{descending order of dimension}.  Since each $G$ can add to $\sigma(V)$ only finitely many varieties of strictly smaller dimension, the process will eventually terminate.
\end{proof}

\section{Uniformity}
In this final section, I will discuss uniform versions of the two main results of the document, Theorems \ref{thrm:PartialAOwDerivs} and \ref{thrm:PrecAOwDerivs}.  For this, we are closely following work of Scanlon, who gives in \cite{Scanlon2004} a very general approach to uniformising results of this type.  Unfortunately, our setting does not fit perfectly into Scanlon's framework; the full strength of his result is therefore not available to us.  For our purposes it is enough that the central ideas in Scanlon's work \emph{do} apply.  

There are two main lemmas of Scanlon's that we will use.  The first is Lemma 3.1 from \cite{Scanlon2004}, which applies directly.  We write it out here for completeness.

\begin{lma}\label{lma:PassingUpFields}
 Let $k$ be a field and $K$ an algebraically closed field extension of $k$.  Let $X$ be a variety over $k$ and $X_K$ its base change to $K$.  Let $A\suq X(K)$.  Suppose that $Y\suq X_K$ is constructible.  Then there is a natural number $n$, some constructible set $Z\suq X\times X^n$, defined over $k$, and some $a\in A^n$ such that $Z_a(K)\cap A=Y(K)\cap A$.
\begin{proof}
 See Lemma 3.1 from \cite{Scanlon2004}
\end{proof}

 \end{lma}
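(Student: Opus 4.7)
I would follow a model-theoretic approach combining quantifier elimination for algebraically closed fields with a Noetherianity argument. First, by quantifier elimination for ACF, the constructible set $Y$ is defined by a quantifier-free $k$-formula $\phi(x; y)$ in the language of rings, together with a parameter tuple $c \in P(K)$, where $P$ is some $k$-variety (an affine space large enough to house the parameters will do). Equivalently, $Y = W_c$ for the $k$-constructible family $W = \{(x, y) \in X \times P : \phi(x, y)\}$.

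Next, I would analyse the equivalence relation on $P(K)$ defined by $c' \sim c''$ iff $W_{c'}(K) \cap A = W_{c''}(K) \cap A$. The $\sim$-class of $c$ equals the intersection over all $a \in A$ of the $k(a)$-constructible subsets $E_a = \{c' \in P : \phi(a, c') \iff a \in Y\}$. The goal is to produce finitely many witnesses $a_1, \ldots, a_n \in A$ such that on $E_{a_1} \cap \cdots \cap E_{a_n}$ the trace of $W_{c'}$ on $A$ is already forced to equal $Y \cap A$. Once this is achieved, I would partition $\{1, \ldots, n\} = I \sqcup J$ according to $I = \{i : a_i \in Y\}$ and define
\[Z = \Bigl\{(x, y_1, \ldots, y_n) \in X \times X^n : \exists c' \in P,\ \phi(x, c') \land \bigwedge_{i \in I} \phi(y_i, c') \land \bigwedge_{j \in J} \neg \phi(y_j, c')\Bigr\}.\]
By quantifier elimination the existential over $c'$ can be eliminated, so $Z$ is $k$-constructible. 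With $a = (a_1, \ldots, a_n) \in A^n$, the containment $Y \suq Z_a$ is witnessed by $c' = c$, while the reverse containment on $A$ is forced by the defining property of the $a_i$, yielding $Z_a(K) \cap A = Y(K) \cap A$.

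The main obstacle is the finite-reduction step: producing the witnesses $a_1, \ldots, a_n$. Constructible subsets of a Noetherian variety do not in general satisfy DCC (a strictly descending chain of Zariski opens is obtained by successively puncturing points), so one cannot simply appeal to the Zariski-Noetherianity of $P$. I would instead argue iteratively: starting from the empty tuple, at each stage either conclude that the trace of any $c'$ in the current intersection is already $Y \cap A$ on all of $A$, or find a new $a_{m+1} \in A$ whose addition strictly refines the Zariski closure or the irreducible-component structure of $E_{a_1} \cap \cdots \cap E_{a_m}$. Both of these are Noetherian invariants, so the iteration must terminate. Alternatively, one could appeal to elimination of imaginaries in ACF, or invoke a compactness argument against an increasing type. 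This is the delicate heart of Scanlon's proof, and the part where the full model-theoretic content of the lemma is concentrated.
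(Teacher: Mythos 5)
The paper itself gives no proof of this lemma, merely citing Scanlon's Lemma 3.1, so I assess your proposal on its own terms. There is a genuine gap: the finite-reduction step at the heart of your argument is false, and the stopping criterion you propose for the iterative construction does not work.

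Concretely, your reduction requires: there are finitely many $a_1,\dots,a_n\in A$ such that every $c'$ satisfying $\phi(a_i,c')\leftrightarrow(a_i\in Y)$ for all $i$ already has $W_{c'}(K)\cap A=Y(K)\cap A$. Take $k=\mathbb{Q}$, $K=\mathbb{C}$, $X=\mathbb{A}^1$, $A=\overline{\mathbb{Q}}\subseteq K$, and $Y=\{c\}$ for a fixed transcendental $c$; so $\phi(x,y)$ is $x=y$ and $Y\cap A=\emptyset$. No $a_i\in A$ lies in $Y$, so $E_{a_i}=\{c':c'\neq a_i\}$ and $E_{a_1}\cap\dots\cap E_{a_n}=K\setminus\{a_1,\dots,a_n\}$. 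This intersection always contains infinitely many algebraic $c'$, and each such $c'$ has $W_{c'}\cap A=\{c'\}\neq\emptyset=Y\cap A$. So the finite-reduction claim fails for every choice of witnesses. Moreover your proposed Noetherian stopping criterion (stabilisation of the Zariski closure or irreducible-component structure of $\bigcap_i E_{a_i}$) gives a false signal: adding more $a_i$ never changes either invariant, and yet the trace is never forced, so the iteration would terminate with a wrong answer. Correspondingly, the constructible set $Z$ you build --- here $I=\emptyset$, $J=\{1,\dots,n\}$, so after eliminating the existential one gets $Z=\{(x,y_1,\dots,y_n): \bigwedge_j y_j\neq x\}$ --- yields $Z_a\cap A=\overline{\mathbb{Q}}\setminus\{a_1,\dots,a_n\}$, which is infinite rather than empty.

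The lemma is nevertheless true: in this example take $n=0$ and $Z=\emptyset$. But that output cannot be produced by your construction, because your $Z_a\cap A$ is by design the union $\bigcup_{c'\in\bigcap_i E_{a_i}}W_{c'}\cap A$, and there is no mechanism forcing this union to collapse to $Y\cap A$. The information "which $a_i$ lie in $Y$" does not pin down the trace of $Y$ on $A$; one must track how the parameter $c$ sits relative to the algebraic closure of $k(A)$ inside $K$. That is the actual content of Scanlon's Lemma 3.1, and it is not captured by any finite membership pattern, so the gap here is not one that elimination of imaginaries or compactness will close without a structural change to the argument.
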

 
 The next lemma is the analogue of Lemma 3.2 from \cite{Scanlon2004}.  The statement needs some very slight modification before it applies in our setting, but the proof of the modified lemma is essentially identical to the original.

\begin{lma}\label{lma:UniformConstructible}
 Let $k\suq K$ be algebraically closed fields, $X=\mathbb{A}^s$, $B=\mathbb{A}^t$ and $Y\suq X\times B$ a constructible subset.  Let $A\suq X(K)$.  Suppose that there exist $\alpha,\beta\in A$ with the following property:
 
 For $m\in \mathbb{N}$ and $i\leq m$, let $p^{(m)}_i$ be the $m$-tuple
 \[(\alpha,\dots,\alpha,\beta,\alpha,\dots,\alpha),\]
 with the $\beta$ arising in the $i$th place.  Let $P^{(m)}_i$ be the $k$-Zariski closure of $\left\{p^{(m)}_i\right\}$.  Then 
 \[p^{(m)}_i\not\in P^{(m)}_j\]
 for any $i\ne j$.
 
 Then there is a natural number $n$ and a $k$-constructible set $Z\suq X\times X^n$ such that for any $b\in B(K)$, there is $a\in A^n$ for which $Y_b(K)\cap A=Z_a\cap A$.
 \begin{proof}
  Throughout this proof we will suppress notation, writing $W=W(K)$ whenever $W$ is a constructible set over $K$.
  
  Consider $K$ as a structure in the language $\mathcal{L}=(+,\cdot,P_A,\{a\}_{a\in K})$, where each constant symbol $a$ is to be interpreted as the corresponding element $a\in K$ and $P_A$ is an $s$-ary predicate to be interpreted as the set $A$.  In this language, we can express the condition ``$x\in A\cap W$'' for any affine variety $W$ over $K$.  
  
  Let $T$ be the $\mathcal{L}$-theory of $K$, and then let $\mathcal{L}'$ be $\mathcal{L}$ together with some new constant symbols $b_1,\dots, b_t$.  Write $b=(b_1,\dots,b_t)$.
  
  Let $\mathcal{C}(k)$ be the set of $k$-constructible subsets of $X\times X^N$ (for some $N$) and consider the set
  \[\Gamma = T\cup \{\forall c\in A^N\quad \exists x\in A\quad (x\in Y_b\setminus Z_c\vee x\in Z_c\setminus Y_b): Z\in \mathcal{C}(k)\}.\]
  Suppose that $\Gamma$ is \emph{not} finitely satisfiable.  Let $\Gamma_0$ be a finite subset witnessing this.  Since $T$ is the theory of $K$, $\Gamma_0$ cannot be contained in $T$, so it mentions some finitely many $k$-constructible sets $Z_1,\dots,Z_l$, with $Z_i\suq X\times X^{N_i}$.  Since  $\Gamma_0$ is not satisfiable, we have:
  \[\forall b\in B \quad\exists i\leq l\quad \exists c\in A^{N_i}\quad \forall x\in A\quad( x\not\in Y_b\setminus Z_c\wedge x\not\in (Z_i)_c\setminus Y_b).\]
  In other words, for every $b\in B$, there is some $Z_i$ and some $c\in A^{N_i}$ such that
  \[A\cap (Z_i)_c=A\cap Y_b.\]
  Now let $N=\max\{N_i:i\leq l\}$ and let $n=N+l$.  Define 
  \[Z=\bigcup_{i=1}^{l}Z_i\times X^{N-N_i}\times P^{(l)}_i,\]
  with $P^{(l)}_i$ as in the hypotheses of the lemma.  This $Z$ is a constructible set defined over $k$, and satisfies the conclusion of the lemma: if $b\in B$, then for some $i$, $c$, we have $A\cap (Z_i)_c=A\cap Y_b$. Letting $c'=(c,\alpha,\dots,\alpha, p^{(l)}_i)\in A^{N+l}$, we get
  \[A\cap Z_{c'}=A\cap (Z_i)_c=A\cap Y_b.\]
  So suppose on the other hand that $\Gamma$ is finitely satisfiable.  By the Compactness Theorem, it is satisfiable.  So we have an algebraically closed extension $L\supseteq K$ of $K$, a point $b\in B(L)$ and a set $A^*\suq X(L)$ such that, for every $k$-constructible $Z\suq X\times X^N$ and $c\in (A^*)^N$, we have
  \[Y_b(L)\cap A^*\ne Z_c(L)\cap A^*.\]
  This contradicts Lemma \ref{lma:PassingUpFields} (applied to $k$, $L$, $A^*$, $X$ and $Y_b$).
 \end{proof}

\end{lma}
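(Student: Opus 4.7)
The plan is to proceed by model-theoretic compactness, closely parallel to Scanlon's original argument. Enrich the language of rings to a language $\mathcal{L}$ by adjoining a unary predicate $P_A$ to be interpreted as $A$, together with a constant symbol for each element of $k$. Let $T$ be the complete $\mathcal{L}$-theory of $K$, and let $\mathcal{L}'$ extend $\mathcal{L}$ by fresh constants $b_1,\dots,b_t$, intended as coordinates of a hypothetical ``bad'' $b\in B$. The natural set of sentences to examine is
\[\Gamma=T\cup\bigl\{\forall c\in A^N\;\exists x\in A\;\bigl((x\in Y_b\setminus Z_c)\vee(x\in Z_c\setminus Y_b)\bigr): N\in\mathbb{N},\; Z\subseteq X\times X^N\text{ a }k\text{-constructible set}\bigr\},\]
expressing that no $k$-constructible set, sliced by any tuple from $A$, cuts out $Y_b\cap A$ inside $A$.

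First I would handle the case in which $\Gamma$ is satisfiable. Then the Compactness Theorem produces an algebraically closed extension $L\supseteq K$, a point $b\in B(L)$ and a set $A^*\subseteq X(L)$ (the interpretation of $P_A$) for which no $k$-constructible fibre ever matches $Y_b\cap A^*$ on $A^*$. This directly contradicts Lemma \ref{lma:PassingUpFields} applied to $k\subseteq L$, $A^*$, $X$ and the constructible set $Y_b\subseteq X_L$, so this case cannot arise.

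The substantive case is when $\Gamma$ fails to be finitely satisfiable. Any finite fragment witnessing this mentions only finitely many $k$-constructible sets $Z_1,\dots,Z_l$, with $Z_i\subseteq X\times X^{N_i}$, and unwinding the negation shows that for every $b\in B(K)$ there exist $i\leq l$ and $c\in A^{N_i}$ with $Y_b(K)\cap A=(Z_i)_c\cap A$. What remains is to amalgamate these finitely many options into a single $k$-constructible set. Setting $N=\max_i N_i$ and $n=N+l$, the idea is to form
\[Z=\bigcup_{i=1}^{l}Z_i\times X^{N-N_i}\times P^{(l)}_i,\]
and to recover $(Z_i)_c$ by choosing the final $l$ parameter coordinates to be exactly $p^{(l)}_i$. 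The hypothesis $p^{(l)}_i\notin P^{(l)}_j$ for $i\neq j$ is precisely what guarantees this choice activates the $i$-th summand and leaves all others dormant, producing the required uniform $Z$.

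The main obstacle is this second half: the compactness argument on its own only tells us that, for each $b$, some one of finitely many candidate constructible sets suffices, and we need a single $k$-constructible $Z$ uniformising over $b$. The ``selector'' construction is available solely because of the hypothesis on $\alpha$ and $\beta$; the $p^{(m)}_i$ function as genuine $k$-definable markers inside $A$ precisely because their $k$-Zariski closures fail to contain one another, and checking that this condition carries the weight of the uniformisation step is the delicate point the hypothesis has been designed to resolve.
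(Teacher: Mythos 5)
Your proposal is correct and follows essentially the same route as the paper: the same expanded language $\mathcal{L}$ with predicate $P_A$, the same set of sentences $\Gamma$, the same compactness dichotomy with the satisfiable branch handled via Lemma \ref{lma:PassingUpFields}, and the same selector construction $Z=\bigcup_i Z_i\times X^{N-N_i}\times P^{(l)}_i$ using the markers $p^{(l)}_i$. The only cosmetic difference is the order in which you treat the two cases; the substance is identical.
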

Using this, we would like to get a uniform version of \ref{thrm:PrecAOwDerivs}. This can indeed be done, though perhaps not in exactly the manner we might like.  

Given an algebraic family $V\suq \mathbb{C}^{3n+k}$, we can apply \ref{lma:UniformConstructible}, with 
\[A=\{J(\tau):\tau\in\uh^n \text{ quadratic}\}.\]
For the resulting constructible set $Z$, we can write
\[Z=\bigcup_{i=1}^{r}X_i\setminus Y_i\]
for some varieties $X_i, Y_i$ defined over $\overline{\mathbb{Q}}$.   If we apply \ref{thrm:PrecAOwDerivs} (under the assumption of Conjecture \ref{conj:AlgIndep}) to each of the $X_i$ and $Y_i$, we get a finite collection $\sigma(V)$, consisting of \emph{pairs} $(G,H)$ of $\uh$-special varieties, with corresponding $G$-varieties $(W_G,W_H)$, such that the union of the $\uh$-special subvarieties of $J^{-1}(V_b)$, for any fibre $b$, is precisely the fibre of the set
\[\bigcup_{(G,H)\in\sigma(V)}\left[\bigcup_{\gamma\in\slz} \gamma G\cap \operatorname{Sp}(\gamma,W_G)\mathbin{\Bigg\backslash}\bigcup_{\gamma\in\slz}\gamma H\cap\operatorname{Sp}(\gamma,W_H)\right]\]
at some quadratic $\tau=\tau(b)$.  

This is not quite as good as we might like; one would prefer not to have the $\uh$-special sets $H$ in the picture.  These arise thanks to the fact that $Z$ is only constructible, rather than Zariski closed.  It does not seem possible to get around this; the reason is essentially the same as the reason why the full strategy outlined in Scanlon's paper \cite{Scanlon2004} does not work. 

Let $G$ and $H$ be $\uh$-special varieties and take a corresponding $G$-variety $W_G$ and $H$-variety $W_H$.  If it were the case that $(G\cap H, W_G\cap W_H)$ was geodesically minimal whenever $(G,W_G)$ and $(H,W_H)$ were, then we would be able to apply the rest of Scanlon's work.  This is not necessarily the case, so the rest of Scanlon's work cannot be applied.  Hence the above seems likely to be the best possible uniform version of \ref{thrm:PrecAOwDerivs}.

Theorem \ref{thrm:PartialAOwDerivs}, however, can be uniformised more cleanly, since it is less precise.

\begin{propn}
 Assume Conjecture \ref{conj:AlgIndep}.  Let $V\suq\mathbb{C}^{3n+k}$ be an algebraic variety (with arbitrary field of definition), considered as an algebraic family of fibres $V_b\suq\mathbb{C}^{3n}$.  There is a natural number $N$, and an $\slz$-finite collection $\sigma(V)$ of $\uh$-special subvarieties of $\uh^{n+N}$, with the following property.

For every $b\in\mathbb{C}^k$ with $V_b\ne \mathbb{C}^{3n}$, the $\uh$-special points of $J^{-1}(V_b)$ are contained in
\[\bigcup_{G\in\sigma(V)} G_\tau,\]
where $G_\tau$ is the fibre of $G$ at some fixed quadratic $\tau=\tau(b)\in\uh^N$.
Moreover, all of the $G_\tau$ are \emph{proper} subvarieties of $\uh^n$.
 \begin{proof}
  Let $V\suq\mathbb{C}^{3n+k}$ be a variety, considered as an algebraic family of fibres $V_b\suq\mathbb{C}^{3n}$.  We will apply Lemma \ref{lma:UniformConstructible}, with $X=\mathbb{C}^{3n}$, $Y=V$, $K=\mathbb{C}$, $k=\overline{\mathbb{Q}}$ and
  \[A=\{J(\tau):\tau\in\uh^n \text{ quadratic}\}.\]
  To apply the lemma, we need to find two suitable points $\alpha,\beta\in A$.  This is easy; we only need the $j$-coordinates of $\alpha$ and $\beta$ to be distinct.  So Lemma \ref{lma:UniformConstructible} does apply; we get a $\overline{\mathbb{Q}}$-constructible set $Z\suq \mathbb{C}^{3(n+dn)}$ such that, for every $b\in\mathbb{C}^k$, there is some $a\in A^{d}$ such that
  \[Z_a\cap A = V_b\cap A.\]
  Write
  \[Z=\bigcup_{i=1}^{r}X_i\setminus Y_i,\]
  for some $\overline{\mathbb{Q}}$-varieties $X_i$ and $Y_i$.  We will apply Theorem \ref{thrm:PrecAOwDerivs} to each $X_i$ and $Y_i$ separately.  We get some finite sets $\sigma(X_i)$ and $\sigma(Y_i)$ of $\uh$-special varieties, with associated $G$-varieties, exactly describing the special subvarieties of the $Z_i$ in the manner described in the statement of Theorem \ref{thrm:PrecAOwDerivs}.\\
  
  Now, given $b\in\mathbb{C}^{3d}$, let $a\in A^d$ be such that $Z_a\cap A=V_b\cap A$.  Let $\tau$ be a preimage of $a$ under $J$.  
  
  First suppose that no $\sigma(X_i)$ contains any $G$ such that, for some $\gamma\in\slz^N$, $G_{\gamma\tau}=\uh^n$.   Then we are done; the special points of $J^{-1}(V_b)$ are contained in the $\slz$-finite collection of proper $\uh$-special varieties
  \[\{\gamma'\cdot G_{\gamma\tau}:G\in\sigma(X_i), i\leq r,\gamma\in\slz^N,\gamma'\in\slz^n\}.\]
  On the other hand, suppose that some $\sigma(X_i)$ contains $G$ such that, for some $\gamma\in\slz^N$, $G_{\gamma\tau}=\uh^n$.  Then the $G$-variety associated to $G$ cannot impose any condition on the coordinates corresponding to $\uh^n$.  Hence by the properties laid out in \ref{thrm:PrecAOwDerivs}, we must have $(X_i)_a=\mathbb{C}^{3n}$.  
  
  Now apply the same argument to $Y_i$.  There are 2 possibilities.  Either:
  
  \begin{itemize}
   \item The special points of $(Y_i)_a$ are contained in an $\slz$-finite collection 
   \[\{\gamma'\cdot G_{\gamma\tau}:G\in\sigma(Y_i),i\leq r,\gamma\in\slz^N,\gamma'\in\slz^n\},\]
   with each $G_{\gamma\tau}$ being a \emph{proper} $\uh$-special subvariety of $\uh^n$, or
   \item $(Y_i)_a=\mathbb{C}^{3n}$.
  \end{itemize}
  In the first case, since the special points of $(Y_i)_a$ are contained in a lower-dimensional set, it follows that the special points of $Z_a$ are Zariski dense, whence $V_b=\mathbb{C}^{3n}$.  In the second case, $(X_i)_a\setminus (Y_i)_a$ contributes no new special points, so we can ignore this $i$ and move on.
  
  Finally, note that if $G_\tau=\uh^n$ for some $\tau$, then it must be the case for every $\tau'$ that either $G_{\tau'}=\uh^n$ or $G_{\tau'}=\emptyset$.  Thus, if some $G\in\sigma(X_i)$ has the property that (for some $\tau$), $G_\tau=\uh^n$, we can safely remove it.  By the previous arguments, all the special points will still be covered by the rest of the $G\in\bigcup\sigma(X_i)$, except in the case where $V_b=\mathbb{C}^{3n}$.
  
%
%
%
  Hence the ($\slz$-finite) collection
  \[\sigma(V)=\{\gamma\cdot G: \gamma\in\slz^n, G\in\sigma(X_i)\text{ for some }i,\text{ and for every }\tau\in\uh^N, G_\tau\ne\uh^n\}\]
  satisfies the conclusion of the proposition.
 \end{proof}

\end{propn}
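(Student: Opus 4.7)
The plan is to combine the model-theoretic uniformization of Lemma~\ref{lma:UniformConstructible} with the precise statement of Theorem~\ref{thrm:PrecAOwDerivs}, which is available under Conjecture~\ref{conj:AlgIndep}. First, I would apply Lemma~\ref{lma:UniformConstructible} with $X=\mathbb{C}^{3n}$, $B=\mathbb{C}^k$, $Y=V$, the field extension $\overline{\mathbb{Q}}\suq\mathbb{C}$, and $A=\{J(\sigma):\sigma\in\uh^n\text{ quadratic}\}$. The hypothesis of that lemma is easily verified by choosing any two quadratic points $\sigma_1,\sigma_2\in\uh^n$ with differing leading $j$-coordinates: since these coordinates lie in $\overline{\mathbb{Q}}$, the polynomial ``the leading $j$-coordinate in slot $j$ equals $j(\sigma_2)$'' distinguishes $p_i^{(m)}$ from $p_j^{(m)}$. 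The lemma then outputs some $d\in\mathbb{N}$ and a $\overline{\mathbb{Q}}$-constructible $Z\suq\mathbb{C}^{3n}\times\mathbb{C}^{3nd}$ such that for every $b\in\mathbb{C}^k$ there exists $a\in A^d$ with $V_b\cap A=Z_a\cap A$. We will take $N=nd$.

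Next, I would write $Z=\bigcup_{i=1}^r (X_i\setminus Y_i)$ for $\overline{\mathbb{Q}}$-varieties $X_i,Y_i\suq\mathbb{C}^{3(n+nd)}$ and apply Theorem~\ref{thrm:PrecAOwDerivs} separately to each $X_i$ and $Y_i$. This produces finite collections $\sigma(X_i),\sigma(Y_i)$ of $\uh$-special subvarieties of $\uh^{n+nd}$ together with attached $G$-varieties that precisely describe the quadratic points in $J^{-1}(X_i)$ and $J^{-1}(Y_i)$. Fixing $b$ with $V_b\ne\mathbb{C}^{3n}$, pick $a\in A^d$ as in the first step and a quadratic preimage $\tau\in\uh^{nd}$ of $a$ under $J$. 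A quadratic point of $V_b$ corresponds to a quadratic $\sigma\in\uh^n$ with $(\sigma,\tau)\in J^{-1}(Z)$, and hence is controlled by the fibres $G_{\gamma\tau}$ as $G$ runs through the various $\sigma(X_i)$ and $\gamma\in\slz^N$.

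The main obstacle is the case analysis forced by the locally closed structure of $Z$. For each $G\in\sigma(X_i)$, either $G_{\gamma\tau}\subsetneq\uh^n$ for every $\gamma$---in which case $G_{\gamma\tau}$ is a proper subvariety, as required---or, by the product structure of $\uh$-special varieties, $G_\tau\in\{\uh^n,\emptyset\}$ for every quadratic $\tau$, and the $G$-variety attached by Theorem~\ref{thrm:PrecAOwDerivs} then imposes no condition on the first $n$ coordinates, forcing $(X_i)_a=\mathbb{C}^{3n}$. Running the same dichotomy on $Y_i$ splits into two subcases: if $(Y_i)_a=\mathbb{C}^{3n}$, then $X_i\setminus Y_i$ contributes nothing at $a$ and can be dropped; otherwise the special points of $(Y_i)_a$ lie in a lower-dimensional $\slz$-finite set, so the special points of $Z_a$ (hence of $V_b\cap A$) are Zariski dense in $\mathbb{C}^{3n}$, forcing $V_b=\mathbb{C}^{3n}$ and contradicting our assumption. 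Therefore every offending $G$ may be discarded, and setting
\[\sigma(V)=\bigl\{\gamma'\cdot G:\gamma'\in\slz^n,\ G\in\sigma(X_i)\text{ for some }i,\ G_\tau\ne\uh^n\text{ for all }\tau\in\uh^N\bigr\}\]
gives an $\slz$-finite collection of $\uh$-special subvarieties of $\uh^{n+N}$, all of whose fibres $G_\tau$ are proper and which together cover every quadratic point of $J^{-1}(V_b)$.
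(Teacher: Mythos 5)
Your proposal matches the paper's own proof essentially step for step: the same application of Lemma~\ref{lma:UniformConstructible} with $A=\{J(\tau):\tau\text{ quadratic}\}$, the same verification of its hypothesis via distinct $j$-coordinates, the same decomposition of $Z$ into locally closed pieces $X_i\setminus Y_i$, the same invocation of Theorem~\ref{thrm:PrecAOwDerivs}, and the same case analysis handling the $G$ with $G_\tau=\uh^n$ (using that such $G$ force $(X_i)_a=\mathbb{C}^{3n}$, and then either $(Y_i)_a=\mathbb{C}^{3n}$ or Zariski density forces $V_b=\mathbb{C}^{3n}$). This is correct and is the same argument as the paper.
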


To conclude, we will state one final corollary of the above result, which simply says that Theorem \ref{thrm:PartialAOwDerivs} holds for arbitrary varieties, rather than just those defined over $\overline{\mathbb{Q}}$.

\begin{cor}
 Assume Conjecture \ref{conj:AlgIndep}. Let $V\suq\mathbb{C}^{3n}$ be a proper algebraic variety (with arbitrary field of definition).  There exists an $\slz$-finite collection $\sigma(V)$, consisting of \emph{proper} $\uh$-special varieties of $\uh^n$, such that every $\uh$-special point in $J^{-1}(V)$ is contained in some $G\in\sigma(V)$.
 \begin{proof}
  Immediate.
 \end{proof}

\end{cor}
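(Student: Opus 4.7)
The plan is simply to reduce the statement to the preceding uniform proposition by regarding an arbitrary variety as a fibre of an $\overline{\mathbb{Q}}$-definable family. Given a proper algebraic variety $V\suq\mathbb{C}^{3n}$ defined by polynomials with arbitrary complex coefficients, I would list the defining polynomials and treat each of their coefficients as a new variable. This produces an algebraic family $\widetilde{V}\suq\mathbb{C}^{3n+k}$, defined over $\mathbb{Q}\suq\overline{\mathbb{Q}}$, such that $V=\widetilde{V}_{b}$ for some $b\in\mathbb{C}^{k}$ (the tuple of coefficients).

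Next I would apply the preceding proposition to $\widetilde{V}$ (for which Conjecture \ref{conj:AlgIndep} is assumed to hold). This yields an integer $N$ and an $\slz$-finite collection $\sigma(\widetilde{V})$ of $\uh$-special subvarieties of $\uh^{n+N}$ such that, whenever $\widetilde{V}_b\ne\mathbb{C}^{3n}$, the $\uh$-special points of $J^{-1}(\widetilde{V}_b)$ all lie in
\[\bigcup_{G\in\sigma(\widetilde{V})} G_{\tau(b)},\]
with each $G_{\tau(b)}$ a \emph{proper} $\uh$-special subvariety of $\uh^n$, for an appropriate quadratic $\tau(b)\in\uh^{N}$.

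Since $V=\widetilde{V}_b$ is proper in $\mathbb{C}^{3n}$ by hypothesis, this applies, and so the $\uh$-special points of $J^{-1}(V)$ are all contained in
\[\sigma(V):=\bigl\{G_{\tau(b)}:G\in\sigma(\widetilde{V})\bigr\}.\]
This collection is $\slz$-finite (as the image of an $\slz$-finite collection under fibering at a fixed $\tau(b)$) and consists of proper $\uh$-special subvarieties of $\uh^n$, as required. There is essentially no obstacle here: the only subtlety is ensuring that taking a fibre of an $\slz$-finite family at a fixed point preserves $\slz$-finiteness, which follows immediately from the fact that the $\slz^{n+N}$-action on $\uh^{n+N}$ restricts coordinatewise to the $\slz^n$-action on $\uh^n$ once the last $N$ coordinates are fixed.
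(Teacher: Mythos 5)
Your argument is correct and is precisely what the paper's ``Immediate'' means: deduce the corollary from the uniform proposition directly preceding it. One small simplification: the lifting step (parametrising the coefficients of $V$ to produce a $\overline{\mathbb{Q}}$-definable family $\widetilde{V}$) is actually unnecessary, since the preceding proposition already permits $V$ to have arbitrary field of definition; one can apply it with $k=0$ and read off the conclusion, the family structure being what the proposition's own proof uses internally via Lemma \ref{lma:UniformConstructible}.
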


\bibliographystyle{../../bib/scabbrv}
\bibliography{../../bib/thebib}

\begin{thebibliography}{10}

\bibitem{Andre1998}
{\sc Y.~Andr\'e}.
\newblock Finitude des couples d'invariants modulaires singuliers sur une
  courbe alg\'ebrique plane non modulaire.
\newblock {\em J. Reine Angew. Math.}, 505:203--208, 1998.

\bibitem{Bertolin2002204}
{\sc C.~Bertolin}.
\newblock Périodes de 1-motifs et transcendance.
\newblock {\em Journal of Number Theory}, 97(2):204 -- 221, 2002.

\bibitem{Diaz2000}
{\sc G.~Diaz}.
\newblock Transcendance et indépendance algébrique: liens entre les points de
  vue elliptique et modulaire.
\newblock {\em The Ramanujan Journal}, 4(2):157--199, 2000.

\bibitem{Klingler2014}
{\sc B.~Klingler and A.~Yafaev}.
\newblock The {A}ndr\'e-{O}ort conjecture.
\newblock {\em Ann. of Math. (2)}, 180(3):867--925, 2014.

\bibitem{Masser1975}
{\sc D.~Masser}.
\newblock {\em Elliptic functions and transcendence}.
\newblock Lecture Notes in Mathematics, Vol. 437. Springer-Verlag, Berlin-New
  York, 1975.

\bibitem{Mertens2015}
{\sc M.~H. Mertens and L.~Rolen}.
\newblock On class invariants for non-holomorphic modular functions and a
  question of {B}ruinier and {O}no.
\newblock {\em Res. Number Theory}, 1:Art. 4, 13, 2015.

\bibitem{Peterzil2004}
{\sc Y.~Peterzil and S.~Starchenko}.
\newblock Uniform definability of the {W}eierstrass {$\wp$} functions and
  generalized tori of dimension one.
\newblock {\em Selecta Math. (N.S.)}, 10(4):525--550, 2004.

\bibitem{Pila2011}
{\sc J.~Pila}.
\newblock O-minimality and the {A}ndr\'e-{O}ort conjecture for {$\mathbb C^n$}.
\newblock {\em Ann. of Math. (2)}, 173(3):1779--1840, 2011.

\bibitem{Pila2013}
{\sc J.~Pila}.
\newblock Modular {A}x-{L}indemann-{W}eierstrass with derivatives.
\newblock {\em Notre Dame J. Form. Log.}, 54(3-4):553--565, 2013.

\bibitem{Pila2006}
{\sc J.~Pila and A.~J. Wilkie}.
\newblock The rational points of a definable set.
\newblock {\em Duke Math. J.}, 133(3):591--616, 2006.

\bibitem{Scanlon2004}
{\sc T.~Scanlon}.
\newblock Automatic uniformity.
\newblock {\em Int. Math. Res. Not.}, (62):3317--3326, 2004.

\bibitem{Siegel1935}
{\sc C.~Siegel}.
\newblock Über die {K}lassenzahl quadratischer {Z}ahlkörper.
\newblock {\em Acta Arithmetica}, 1(1):83--86, 1935.

\bibitem{Spence2016}
{\sc H.~Spence}.
\newblock {A}ndr\'e-{O}ort for a nonholomorphic modular function.
\newblock {\em arXiv:1607.03769}, 2016.

\bibitem{Dries1998}
{\sc L.~van~den Dries}.
\newblock {\em Tame topology and o-minimal structures}, volume 248 of {\em
  London Mathematical Society Lecture Note Series}.
\newblock Cambridge University Press, Cambridge, 1998.

\bibitem{Dries1994}
{\sc L.~van~den Dries and C.~Miller}.
\newblock On the real exponential field with restricted analytic functions.
\newblock {\em Israel J. Math.}, 85(1-3):19--56, 1994.

\bibitem{Zagier2008}
{\sc D.~Zagier}.
\newblock Elliptic modular forms and their applications.
\newblock In {\em The 1-2-3 of modular forms}, Universitext, pages 1--103.
  Springer, Berlin, 2008.

\end{thebibliography}
\end{document}